\numberwithin{equation}{section}
\theoremstyle{thmstyleone}%
\newtheorem{theorem}{Theorem}
\newtheorem{proposition}[theorem]{Proposition}%
\theoremstyle{thmstyletwo}%
\theoremstyle{thmstylethree}%
\definecolor{greenM}{RGB}{0, 166, 99}
\definecolor{orangeM}{RGB}{255, 162, 0}
\definecolor{blueM}{RGB}{0, 129, 227}
\definecolor{violetM}{RGB}{158, 94, 206}
\newcommand{\sixpt}{\fontsize{6pt}{7.2pt}\selectfont}
\begin{document}

\title[Article Title]{Cyber risk modeling using a two-phase Hawkes process with external excitation}


\author[1]{\fnm{Alexandre} \sur{Boumezoued}}\email{alexandre.boumezoued@milliman.com}
\equalcont{These authors contributed equally to this work.}

\author[1,2]{\fnm{Yousra} \sur{Cherkaoui}}\email{yousra.cherkaouiTangi@ensae.fr}
\equalcont{These authors contributed equally to this work.}

\author[2]{\fnm{Caroline} \sur{Hillairet}}\email{caroline.hillairet@ensae.fr}
\equalcont{These authors contributed equally to this work.}

\affil[1]{\orgdiv{R\&D}, \orgname{Milliman}, \orgaddress{\street{14 Avenue de la Grande Armée}, \city{Paris}, \postcode{75017}, \country{France}}}

\affil[2]{\orgdiv{ENSAE Paris}, \orgname{CREST UMR 9194}, \orgaddress{\street{5 Avenue Henry Le Chatelier}, \city{Palaiseau}, \postcode{91120}, \country{France}}}


\abstract{
With the growing digital transformation of the worldwide economy, cyber risk has become a major issue. As 1\% of the world’s GDP (around \$1,000 billion) is allegedly lost to cybercrime every year, IT systems continue to get increasingly interconnected, making them vulnerable to accumulation phenomena that undermine the pooling mechanism of insurance. As highlighted in the literature, Hawkes processes appear to be suitable to capture contagion phenomena and clustering features of cyber events. This paper extends the standard Hawkes modeling of cyber risk frequency by adding external shocks, such as the publication of cyber vulnerabilities that are deemed to increase the likelihood of attacks in the short term. While the standard Hawkes model attributes all clustering phenomena to self-excitation, this paper introduces a model designed to capture external common factors that may explain part of the systemic pattern. This aims to provide a better quantification of contagion effects. We propose a Hawkes model with two kernels, one for the endogenous factor (the contagion from other cyber events) and one for the exogenous component (cyber vulnerability publications). We use parametric exponential specifications for both the internal and exogenous intensity kernels, and we compare different methods to tackle the inference problem based on public datasets containing features of cyber attacks found in the Hackmageddon database and cyber vulnerabilities from the Known Exploited Vulnerability database and the National Vulnerability Dataset. By refining the external excitation database selection, the degree of endogeneity of the model is nearly halved. We illustrate our model with simulations and discuss the impact of taking into account the external factor driven by vulnerabilities. Once an attack has occurred, response measures may be implemented to limit the effects of an attack. These measures include patching vulnerabilities and reducing the attack's contagion. We use an augmented version of the model by adding a second phase modeling a reduction in the contagion pattern from the remediation measures. Based on this model, we explore various scenarios and quantify the effect of mitigation measures of an insurance company that aims to mitigate the effects of a cyber pandemic in its insured portfolio.}

\keywords{Cyber risk, cyber insurance, Hawkes process, external excitation, exogenous factors, cyber vulnerabilities, cyber pandemic, reaction measures.}



\maketitle

\section{Introduction}\label{intro}

As Information Technology (IT) systems become increasingly interconnected and networks closely interwined, cyber risk has emerged as one of the main threats to most of organizations worldwide. 
IT hygiene measures are undeniably essential but are unfortunately not always sufficient since no system is entirely proof against attacks. All devices have software vulnerabilities that might be exploited. The past few years which have been shaped by the Covid-19 health crisis on one hand and the war in Ukraine on the other, have shown that cyber attacks can cripple strategic sectors and thus cause significant losses. Insurance companies play a crucial role in providing a financial and IT protection to face the consequences of an attack. They are increasingly concerned about the contagious nature of this risk, which challenges the core principle of any insurance business: the pooling mechanism. In this paper, we focus on modeling the frequency of cyber attacks while taking into account this contagion phenomena.

From an insurer's perspective, several challenges in quantifying this risk arise and are inherent to the nature of cyber risk. Due to its global and contagious nature, the first natural question concerns its insurability. One can cite, for instance, Christian Biener et al. in \cite{biener2015insurability} where they argue that the insurability of cyber risk is limited by the lack of data, the difficulty of modeling, and the potential for catastrophic losses. Other work addresses the modeling questions raised by cyber risk. Yannick Bessy-Roland et al. in \cite{Multi-Variate_HawkesBessy} and Sébastien Farkas et al. in \cite{farkas2021cyber} address the modeling questions in a traditional insurance framework, where the modeling of frequency and severity is separated. To address the accumulative and contagious nature of cyber risk, the Poissonian framework is abandoned in \cite{Multi-Variate_HawkesBessy}, and a multivariate Hawkes process is shown to be suitable for modeling the frequency of cyber breaches from the PRC database. In \cite{farkas2021cyber} and using the number of breached records as an indicator of a data breach severity, the authors propose classification method based on regression trees for cyber claims. This criteria could be used as an insurability indicator. Other works study the spread of cyber attacks within a network, {see \cite{awiszus2023modeling} and \cite{fahrenwaldt2018pricing}}. For example in \cite{fahrenwaldt2018pricing}, the authors show that a star-shaped network is more likely to rapidly propagate cyber attacks than, for example, a homogeneous network. Other works study this spread using epidemiological SIR models such as in  \cite{hillairet2021propagation}.

Another study has investigated including this risk in the Solvency Capital Requirement (SCR) calculation. Notably, Martin Eling et al.'s work in \cite{eling2020capital} highlights that current regulatory frameworks underestimate cyber risk and proposes adopting more advanced models for accurate capital estimation. The study also reveals a strong correlation between cyber incident severity and frequency, impacting cyber risk insurance policy design.

Due to the presence of contagion and accumulation phenomena in cyber risk, Hawkes processes are proposed to model the frequency of cyber attacks in this paper. {For frequency modeling in insurance, the Poisson process is commonly used, see for example \cite{zeller2022comprehensive} and \cite{peng2017modeling} where the arrival of cyber attacks is modeled through an inhomogeneous marked Poisson process. Instead, using self-exciting processes such as Hawkes processes allows } the intensity $\lambda_t$ to capture the effect of the past events occurring at times $(T_n)_{n\geq 1}$ with some characteristics (or marks) $(Y_n)_{n \geq 1}$ .  The {marks $(Y_n)_{n \geq 1}$} indicate the level of contagiousness associated with the event. Since not all cyber attacks have the same potential for contagiousness, the higher the contagiousness of the event, the greater its impact on the overall intensity. The effect of the past  $(T_n)_{n\geq 1}$ is captured \color{black} through a positive kernel $\phi$ that shapes how these events contribute to the intensity $\lambda_t$, namely: 

\begin{equation}
\lambda_t = \lambda_0 + {\sum_{T_n<t}  \phi(t-T_n, Y_n) }.
\label{hawkes_standard}
\end{equation}

As new events occur, the intensity increases in the short term, thus leading to a higher influx of events, while for some longer times their impact vanishes, when the kernel $\phi$ is decreasing. This reflects the auto-excitation property in cyber risk along with the memory of past events - both features are well designed to captured the so-called clustering behavior of events. 
These processes were previously used in cyber risk modeling in the literature. Baldwin et al. suggest that the Hawkes framework is suitable for cyber frequency modeling since contagion is relatively well captured, see \cite{baldwin2017contagion}. Bessy-Roland et al. prove that a  multivariate \color{black}Hawkes process is particularly suitable to capture the clustering and autocorrelation of inter-arrival times of public data breaches from different attack types in multiple sectors, using data collected from the Privacy Rights Clearinghouse database, see  \cite{Multi-Variate_HawkesBessy}.

In addition to the chain reaction effect where one attack triggers another, cyber attacks can also be provoked by the exploitation of cyber vulnerabilities. The latter are weaknesses in systems or software that could be identified and used by malicious actors. {For example, the Kaseya attack in July 2021 exploited vulnerabilities in the Kaseya VSA software firm, allowing attackers to distribute ransomware that encrypted data on targeted systems. }In the cyber security field, one can refer to Su Zhang et al. in \cite{zhang2015predicting} for example where they tried to predict the time to next vulnerability for a given software, using the National Vulnerability Database which is a collection of all public vulnerabilities affecting various softwares. In the insurance field, Gabriela Zeller et al. in  \cite{zeller2023accumulation} address this systemic component of cyber risk through common vulnerabilities, the claims arrival is modeled as a Poisson process with a systemic vulnerability component. 

{In this paper and} in order to capture the contagion effect and the external excitation part coming from cyber vulnerabilities, an external excitation kernel denoted $\overline{\phi}$ is introduced in the expression of the intensity of the standard linear Hawkes process. This kernel $\overline{\phi}$ determines how external events arriving at times $(\overline{T}_k)_{k \geq 1}$ impact the intensity process. {Vulnerabilities can be distinguished depending on their severity, such as provided by the National Vulnerabily Database, allowing industries, organizations, and governments to classify the criticality and prioritize the remediation activities. As such, in the theoretical framework, we introduce the characteristics   $(\overline{Y}_k)_{k \geq 1}$ that could be an indicator of the severity levels of vulnerabilities}. This means that in (\ref{hawkes_standard}) we add the effect of the external event:

\begin{equation}
\lambda_t = \lambda_0 + {\sum_{\overline{T}_k<t}  \overline{\phi}(t-\overline{T}_k, \overline{Y}_k)} + {\sum_{T_n<t}  \phi(t-T_n, Y_n) }.
\label{hawkes_extern}
\end{equation}

This Hawkes model with external excitation has already been used in the litterature in other applications. For example, one can cite Angelos Dassios et al. \cite{dassios2011dynamic} for credit risk, Alexandre Boumezoued  \cite{boumezoued2016population} for population dynamics modeling, \cite{rambaldi2015modeling} and  \cite{rambaldi2018detection} in high frequency finance and exchange market and \cite{brachetta2022optimal} in reinsurance optimal strategy. To our knowledge, this Hawkes model with external excitation has not been used to model cyber attacks in the insurance framework. 

We provide hereafter a  numerical example to illustrate the estimation accuracy improvements for the Hawkes process regime when incorporating external events in the model. By taking deterministic characteristics and exponential kernels, the intensity is written  $\lambda_t = \lambda_0 + \sum_{T_i < t} m e^{-\delta (t-T_i)}+\sum_{\overline{T}_k < t}\overline{m}e^{-\delta (t-T_i)}$. Here, $(\overline{T}_k)_{k \geq 1}$ is a Poisson Process with intensity denoted $\rho$ and the endogeneity degree of the system (detailed in \ref{endo_Hawkes}) is captured here by the $L^1$ norm of $\phi$:
$\lVert \phi \rVert = \frac{m}{\delta}$.
In this example, and as illustrated in Table~\ref{hawkes_external_calibration_regime_switch}, we are in the subcritical regime ($\lvert \phi \rVert<1$). Notably, the contribution of external events is significantly higher when compared to internal events, since we took  $\rho$ quite important and $\overline{m}$ larger than $m$. 

A trajectory of the Hawkes process with external excitation is generated within a simulation horizon of $\tau = 3$ days. The calibration is carried out by maximizing the likelihood of the two processes, one using  (\ref{hawkes_extern}) and the other using the model without externalities (\ref{hawkes_standard}). The results are summarized in Table~\ref{hawkes_external_calibration_regime_switch}:

\begin{table}[h]
\caption{Calibration results for a Hawkes process with external excitation and a standard linear Hawkes process using the likelihood method}\label{hawkes_external_calibration_regime_switch}
\begin{tabular}{@{}lccccccc@{}}
\toprule
& $\boldsymbol{\lambda_0}$ & $\boldsymbol{\rho}$ & $\boldsymbol{\overline{m}}$ & $\boldsymbol{m}$ & $\boldsymbol{\delta}$ & $\boldsymbol{\tau}$ & $\boldsymbol{\lVert \phi \rVert}$ \\
\midrule
Simulated parameters & 5 & 40 & 10 & 0.5 & 0.7 & 3 days & 0.71 \\
Calibrated with external excitation & 3.04 & 39.67 & 12.69 & 0.35 & 0.51 & - & 0.69 \\
Calibrated without external excitation & 49.03 & - & - & 4.20 & 4.02 & - & 1.04 \\
\bottomrule
\end{tabular}
\end{table}

\newpage

Disregarding external events leads to an increased value of $\lambda_0$ and places us in the over-critical regime instead of the correct sub-critical regime. The regime shift is indicated by a change in the norm of the internal excitation kernel $\phi$, which increases from 0.71 to 1.04. On the other hand, when calibrating the Hawkes process with external excitation, the estimated norm of the internal excitation kernel is found to be 0.69, which is close to the true value 0.71.

This simulated example highlights the importance of taking into account the contribution of external events in the intensity of the Hawkes process. The estimation of the external excitation contribution is refined by differentiating between contributions from external events and the baseline intensity. Moreover, the appropriate regime is accurately identified by considering external events in the Hawkes process with external excitation. 

As soon as cyber attacks are detected, response measures can be initiated to mitigate the potential spread of the attack within a company's network. It is important to note that these containment measures are not solely undertaken by an insurer. Since we are in an insurance context, and for the sake of simplicity, we refer to the responding entity as the insurer, but it could be another entity, like the victim itself or an external cyber security firm. These response measures are meant to be applied within a fictional insurance portfolio, which is represented here by the Hackmageddon database. These could for example include  implementing response and prevention measures initiated by the insurer within these companies in the context of a cyber pandemic. The model that is used to perform simulations takes into account this reactive phase. At a specific deterministic time denoted as $\ell$, containment measures are implemented. These include cutting off the impact of external events $(\overline{T}_k)_k$ after time $\ell$. This ensures that no new cyber vulnerabilities arise during the reaction phase that could trigger an attack. The other reaction measures involve reducing the baseline intensity and the self-excitation component from past cyber attacks at time $\ell$ using reaction parameters $\alpha_0$ and $\alpha_1$ which represent preventive measures taken and the speed at which vulnerabilities previously exploited are patched. {For instance, this might involve implementing stronger password management practices for compromised systems, isolating infected computers, or restoring affected systems from backups}. Furthermore, given that vulnerabilities are expected to have been addressed during the second phase, attacks $(T_n)_n$ after time $\ell$ are expected to be less critical than before time $\ell$. This is captured in the model through their respectives characteristics (denoted as $(Y^{al}_n)_n$ and $(Y^{bl}_n)_n$), by assuming $\mathbb{E}[Y^{bl}_n] > \mathbb{E}[Y^{al}_n] $.

To summarize, the intensity of the counting process is the following: 

\begin{equation}
\lambda_t = \begin{cases}
\lambda_0 + {\sum_{\overline{T}_k<t}  \overline{\phi}(t-\overline{T}_k, \overline{Y}_k)} + {\sum_{T_n<t}  \phi(t-T_n, Y^{bl}_n) } & \text{if } t < \ell \\ 
\\ 
\alpha_0 \lambda_{0} + \alpha_1 (\lambda_{\ell^{-}} - \lambda_0) & \text{if } t = \ell \\ 
\\
\alpha_0 \lambda_0  + \alpha_1 (\lambda_{\ell^{-}} - \lambda_0) e^{-\delta(t-\ell)}   + {\sum_{\ell<T_n<t}  \phi(t-T_n, Y^{al}_n) }  & \text{if } t > \ell 
\end{cases}
\end{equation}

In a nutshell, the new phase is characterized by:
\begin{enumerate}
    \item Cutting off the impact of external shocks, by setting to zero the external kernel after time $\ell$,
    \item Modulating the baseline intensity $\lambda_0$ (through $\alpha_0$ parameter) and the self-excitation component from past attacks (through  $\alpha_1$ parameter),
    \item Changing the distribution of the characteristics of future attacks (after $\ell$).
\end{enumerate}

This two-phase (2P) Hawkes process has been used to model the Covid-19 pandemic in \cite{dassiostwo}, where the effect of lockdown measures have been studied in multiple countries. To our knowledge, it has not been used in cyber risk modeling. The choice of time $\ell$ is deterministic in our case to make calculations easier to handle. {This could be interpreted as the average time required to find a patch addressing a vulnerability: unlike \cite{dassiostwo}, where both phases are calibrated in the context of a Covid-19 pandemic modeling, we only calibrate the first phase of the process and construct scenarios considering the second phase since reactive measures in cyber risk differ in general from one entity to another. This choice is further detailed in Section~\ref{justif_1P}.}

\textit{Scope of this paper} In this paper, we propose a stochastic model incorporating external events and a reaction phase to model cyber attacks frequency, with the aim of capturing the contagion originating from external events. {We illustrate the importance of including external events. This allows us to reassess the degree of endogeneity, and we demonstrate that it is overestimated in the standard Hawkes model.} We also provide closed-form formulas for the expectation of the cyber attacks process, the proof is based on \cite{boumezoued2016population}. These formulas allow us to test the MSE calibration method used in \cite{dassiostwo} and to make closed-form predictions {for optimal reaction parameters selection} in a cyber pandemic simulation. The model is calibrated on the Hackmageddon database, which includes various types of attacks beyond data breaches. These attacks include hacking incidents that involve the exploitation of identified vulnerabilities that we connect to vulnerability databases. Three configurations of the external databases are used. The first one contains vulnerabilities exploited and extracted from the Hackmageddon database. The second one includes the Known Exploited Vulnerability (KEV) database which contains all vulnerabilities that were exploited in an attack including those exploited in the Hackmageddon database. The third one is the National Vulnerability Database (NVD) which is a more comprehensive list of all cyber vulnerabilities that includes the previous two sources. After obtaining the calibrated parameters, the intensity is decomposed and plotted into internal, external, and baseline components. Additionally, we investigate the optimal choice of reaction parameters, considering the constraints of a fictional insurer with a limited response capacity.

The paper is organized as follows. Section~\ref{section_model} provides a detailed explanation of the Hawkes process with external excitation and the two-phase dynamics. The model is presented along with simulated examples, as well as a reminder of main results in terms of the expected future number of cyber attacks in closed-form. Section~\ref{section_inference} first describes different calibration strategies for the first phase of the model, using either the Mean Square Error or the likelihood. The best approach for our problem is then selected based on simulated examples. In Section~\ref{inference_section}, the datasets used for both vulnerabilities and cyber attacks are introduced and the calibration results are detailed with a specific emphasis on the learning of the "endogeneity feature" of the model, showing that it is significantly over-estimated when considering a Hawkes model without external excitation. Finally, in Section~\ref{section_forecasting}, simulations and closed-form predictions from the model are performed, using the parameters estimated for the first phase and considering different scenarios regarding mitigation measures and the insurer reaction capacity to quantify the impact of reaction measures in the second phase.

\section{Hawkes process with external excitation}
\label{section_model}

A two-phase Hawkes process with external excitation is proposed to model the contagion of cyber attacks. This model incorporates the impact of external vulnerabilities on the attacks process intensity. In this section we present the model and we compute the conditional expectation of the cyber attacks process and the likelihood of the cyber attacks and vulnerabilities process.

\subsection{Model specification: Two-phase Hawkes process with exponential kernel and external excitation} \label{section_two_phase_model_description}

Let us recap the key features and definition of a standard linear Hawkes process $(N_t)_{t \ge 0}$. This point process is self-exciting and as all counting processes, it is characterized by its intensity function $(\lambda_t)_{t \ge 0}$. Recall that the intensity $\lambda_t$ represents the "instantaneous probability" of witnessing a jump at time $t$. The intensity includes a baseline rate $\lambda_0$ of spontaneous events denoted $T_n$ and a self-exciting part represented by the sum of all previous jump times $\sum_{T_n<t}$ $\phi(t - T_n{,Y_n})$, where $T_n$ is the jump time of the event $n^{th}$, {$Y_n$ its characteristic or mark,} and $\phi$ is a {positive excitation kernel} that determines how they contribute over time  to the intensity of the Hawkes process. This process is self-exciting because the arrival of a new event increases the intensity of the Hawkes process, leading to a higher probability of observing the arrival of another event. This clustering property is of particular interest in modeling cyber attacks, as it is observed in the case of contagious cyber attacks.

Cyber vulnerabilities can provide attackers with entry points to launch contagious cyber attacks. This implies that the emergence of new vulnerabilities can increase the probability of being attacked. To reflect this external excitation property, we propose a stochastic baseline intensity that incorporates $\sum_{\overline{T_k}<t}$ $\overline{\phi}(t - \overline{T}_k,{\overline{Y}_k})$, where $\overline{T_k}$ is the arrival time of the $k^{th}$ vulnerability{, $\overline{Y}_k$ its characteristic or mark,}  and $\overline{\phi}$ is the kernel function that reflects how vulnerabilities contribute to the intensity of the cyber attack process. The introduction of a new vulnerability increases the intensity of the Hawkes process through the sum of all previous vulnerability jump times. Since not all vulnerabilities have the same effect on IT systems, some being very critical and easily exploitable while others are less so, we introduce marks $\left\{\overline{Y}_k\right\}_{k=1,2, \ldots}$ that capture the heterogeneity in their impact on the intensity, meaning that the external kernel function $\overline{\phi}$ is a function of both $(t-\overline{T}_k)$, the age of the event $\overline{T}_k$ at time $t$, and $\overline{Y}_k$, the characteristic of this event.      

In the case of a contagious cyber attack, an insurer may face a claims accumulation and would need to {encourage} remedial and preventive measures, such as patching vulnerabilities. {The aim here is to capture claims accumulation in the insurer portfolio and to quantify how mitigation measures could reduce this risk.} The proposed model takes this into account by introducing a second phase starting at time $\ell$. During this second phase, the attacks are less contagious due to the preventive measures taken. To demonstrate their effects, marks for attacks are introduced and denoted $\left\{Y_i^{bl}\right\}_{i=1,2, \ldots}$ for $t$ before $\ell$ and $\left\{Y_i^{al}\right\}_{i=1,2, \ldots}$ when $t$ is above $\ell$. So, the kernel $\phi$ is a function on $(t-T_n)$, which is the age of the event $T_n$ at time $t$, and the contagiousness potential included in $Y_n^{bl}$ or $Y_n^{al}$ for this attack, depending on whether it occurred before or after $\ell$. 
To reflect the impact of protection measures, it is assumed that $m^{bl} = \mathbb{E}[Y_1^{bl}] < m^{al} = \mathbb{E}[Y_1^{al}]$ while also assuming that the marks {before $\ell$ as well as the marks after $\ell$} are independent and identically distributed.

In what follows, we denote $N_t = \sum_{i \ge 1} \mathbf{1}_{T_i \le t}$  the cyber attacks process and $\overline{N}_t = \sum_{k \ge 1} \mathbf{1}_{\overline{T}_k \le t}$ the  vulnerabilities Poisson process with  constant intensity $\rho$. Due to the diversity of computer vulnerabilities affecting various software systems, the assumption of independence of vulnerabilities inter-arrival times within Poisson processes seems to be reasonable. In the expression of the intensity $\lambda_t$, we take exponential kernels for $\phi$ and $\overline{\phi}$, as we assume that the effect of an attack is immediate, meaning that as soon as an attack arrives, it increases the intensity of the process $N_t$. While this assumption simplifies the model, it facilitates later calculations of the expectation. In what follows,  $\phi(a,x) = \overline{\phi}(a,x) = x \exp(- \delta a)$. For example, an event $(\overline{T}_k,\overline{Y}_k)$ arriving increases the intensity by $\overline{Y}_k$ and ages exponentially over time.    

Let $(\Omega, \mathcal{A}, \mathbb F=(\mathcal F_t)_{t \geq 0},\mathbb{P})$ be a filtered probability space satisfying the usual conditions where  
$\mathcal F_t:= \sigma( (T_k, Y_k) 1_{T_k\leq t }, (\overline{T}_k, \overline{Y}_k) 1_{\overline{T}_k\leq t })$ (as usual, one consider the right continuous version, augmented with the negligible sets). The $\mathbb F $-adapted  intensity of $N$  is expressed as:

\begin{equation}
\lambda_t = \begin{cases}
\lambda_0  + \sum_{\overline{T_k} < t}\overline{Y_k} e^{-\delta (t-\overline{T_k})} + \sum_{T_i < t}Y_i^{bl} e^{-\delta (t-T_i)} & \text{if } t < \ell \\ 
\\ 
\alpha_0 \lambda_{0} + \alpha_1 (\lambda_{\ell^{-}} - \lambda_0) & \text{if } t = \ell \\ 
\\
\alpha_0 \lambda_0  + \alpha_1 (\lambda_{\ell^{-}} - \lambda_0) e^{-\delta(t-\ell)}   +  \sum_{\ell<T_i < t}Y_i^{al} e^{-\delta (t-T_i)} & \text{if } t > \ell 
\end{cases}
\end{equation}

where: 
\begin{itemize}
    \item $\delta > 0$ is the constant rate of exponential decay.
    \item Self-excitation part:
    \begin{itemize}
        \item $\{Y_i^{bl}\}_{i=1,2,\ldots}$ is a sequence of iid positive jump sizes before $\ell$ with a cumulative distribution function $G^{bl}$ having support on $(0,\infty)$, and we denote $m^{bl} := \mathbb{E}[Y_1^{bl}]$. \label{m_bl}
        \item $\{Y_i^{al}\}_{i=1,2,\ldots}$ is a sequence of iid positive jump sizes after $\ell$ with a cumulative distribution function $G^{al}$ having support on $(0,\infty)$, and we denote $m^{al} := \mathbb{E}[Y_1^{al}]$.
    \end{itemize}
    \item External excitation part:
    \begin{itemize}
        \item $\{\overline{T_k}\}_{k=1,2,\ldots}$ are the arrival times of the homogeneous Poisson vulnerabilities process $\overline{N_t}$ with intensity $\rho > 0$.
        \item $\{\overline{Y_k}\}_{k=1,2,\ldots}$ is a sequence of iid positive jump sizes with distribution function $\overline{F}$ with support on 
        $(0,\infty)$. $\overline{m} := \mathbb{E}[\overline{Y}_1]$.
    \end{itemize}
    \item Reaction parameters:
    \begin{itemize}
        \item $(\alpha_0, \alpha_1) \in [0,1]^2$ are reaction parameters, with $(\alpha_0, \alpha_1) \ne (0,0)$ to avoid the degenerate case where no new events occur after time $\ell$. $\alpha_0$ corresponds to the impact of preventive measures, while $\alpha_1$ represents the effect of patching measures against vulnerabilities and the mitigation of contagiousness for attacks that occured prior to the initiation of the second phase.
    \end{itemize}
    \item The sequences $\{\overline{Y_k}\}_{k=1,2,\ldots}$, $\{\overline{T_k}\}_{k=1,2,\ldots}$, $\{Y_i^{al}\}_{i=1,2,\ldots}$, and $\{Y_i^{bl}\}_{i=1,2,\ldots}$ are assumed to be independent of each other.
\end{itemize}

\subsection{Expectation of the 2P Hawkes process with external excitation} \label{closed_formulas_expectation}

In Proposition \ref{prop3}, we provide the expression for the expected value of the two-phase Hawkes process with external excitation.

\begin{proposition}
\label{prop3}
    The conditional expectation of  $N_t$ given $\mathcal{F}_s $ for $0<s<t<\ell$ is:
    \begin{equation}
    \label{E_N_before_l}
    \mathbb{E}\left[N_{t} \bigm| \mathcal{F}_s \right] = 
\begin{cases}
N_{s}+\lambda_{s}(t-s)+\frac{1}{2} (\rho \overline{m} + \delta \lambda_0) (t-s)^{2} & \text {if }  \delta=m^{bl}
\\ 
N_{s}+\frac{(\rho \overline{m} + \delta \lambda_0) }{\delta - m^{bl}} (t-s)+\left(\lambda_{s}-\frac{\rho \overline{m} + \delta \lambda_0 }{\delta - m^{bl}}\right) \frac{1-e^{-(\delta - m^{bl})(t-s)}}{\delta - m^{bl}} & \text {if } \delta \neq m^{bl} 
\end{cases} 
    \end{equation}
    The conditional expectation of the process $N_t$ given $\mathcal{F}_s $ for $0<\ell<s<t$  is:
\begin{equation}
\mathbb{E}\left[N_{t} \bigm| \mathcal{F}_s \right] = 
\begin{cases}
N_{s}+\lambda_{s}(t-s)+\frac{1}{2} \alpha_0 \delta \lambda_0 (t-s)^{2} & \text {if }  \delta=m^{bl}
\\ 
N_{s}+\frac{\alpha_0 \delta \lambda_0 }{\delta - m^{bl}} (t-s)+\left(\lambda_{s}-\frac{\alpha_0 \delta \lambda_0 }{\delta - m^{bl}}\right) \frac{1-e^{-(\delta - m^{bl})(t-s)}}{\delta - m^{bl}} & \text {if } \delta \neq m^{bl} 
\end{cases} 
\end{equation}

The conditional expectation of $N_t$ given $\mathcal{F}_s $ for $0<s<\ell<t$ is:

\begin{equation}
\begin{aligned}
\mathbb{E}\left[N_{t} \bigm| \mathcal{F}_s \right] = 
\begin{cases}
\begin{aligned}
  \mathbb{E}\left[N_{\ell} \bigm| \mathcal{F}_s \right] + & \frac{\alpha_0 \delta \lambda_0}{2} (t-\ell) ^2 + \lambda_0 (\alpha_0 - \alpha_1) (t-\ell) \\ + & \alpha_1 \mathbb{E}\left[\lambda_{\ell^{-}}  \bigm| \mathcal{F}_s \right] (t-\ell),  
\end{aligned}
 & \text{if }  \delta = m^{al} 
\\ 

\begin{aligned}
  \mathbb{E}\left[N_{\ell} \bigm| \mathcal{F}_s \right] + &
\frac{\alpha_0 \delta \lambda_0}{\delta - m^{al}} (t-\ell)  \\ + & \left( (\alpha_0 - \alpha_1) \lambda_0 +  \alpha_1 \mathbb{E}\left[\lambda_{\ell^{-}} \bigm| \mathcal{F}_s \right] - \frac{\alpha_0 \delta \lambda_0}{\delta - m^{al}} \right)  \\ & \frac{1}{(\delta - m^{al})}(1 - e^{- (\delta - m^{al})(t- \ell)}),    
\end{aligned}
& \text{if } \delta \ne  m^{al}
\end{cases} 
\end{aligned}
\label{E_N_after_l}
\end{equation}

with  
\begin{equation}
\mathbb{E}\left[\lambda_{\ell^{-}} \bigm| \mathcal{F}_s \right] = 
\begin{cases}
\lambda_s + (\delta \lambda_0  + \rho \overline{m}) (\ell-s) & \text {if }  \delta=m^{bl}
\\ 
\frac{\rho \overline{m} + \delta \lambda_0}{\delta - m^{bl}} + (\lambda_s - \frac{\rho \overline{m} + \delta \lambda_0}{\delta - m^{bl}} )e^{- (\delta - m^{bl})(\ell- s)} & \text {if } \delta \neq m^{bl} 
\end{cases} 
\end{equation}

and

\begin{equation}
\mathbb{E}\left[N_{\ell} \bigm| \mathcal{F}_s \right] = 
\begin{cases}
N_{s}+\lambda_{s}(\ell-s)+\frac{1}{2} (\rho \overline{m} + \delta \lambda_0) (\ell-s)^{2} & \text {if }  \delta=m^{bl}
\\ 
N_{s}+\frac{(\rho \overline{m} + \delta \lambda_0) }{\delta - m^{bl}} (\ell-s)+\left(\lambda_{s}-\frac{\rho \overline{m} + \delta \lambda_0 }{\delta - m^{bl}}\right) \frac{1-e^{-(\delta - m^{bl})(\ell-s)}}{\delta - m^{bl}} & \text {if } \delta \neq m^{bl} 
\end{cases} 
\end{equation}    
\end{proposition}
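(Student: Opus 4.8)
The plan is to use that $N$ is a counting process admitting $(\lambda_t)$ as its $\mathbb F$-intensity, so that $M_t := N_t - \int_0^t \lambda_u\,du$ is an $\mathbb F$-martingale. Combined with Fubini's theorem this reduces every case to a single mechanism: for $s<t$,
\begin{equation*}
\mathbb{E}\left[N_t \bigm| \mathcal F_s\right] = N_s + \int_s^t \mathbb{E}\left[\lambda_u \bigm| \mathcal F_s\right]\,du ,
\end{equation*}
so it suffices to determine $g(u):=\mathbb{E}[\lambda_u\mid\mathcal F_s]$ and integrate. The integrability needed to interchange $\mathbb{E}[\cdot\mid\mathcal F_s]$ with the $du$-integral on the bounded interval $[s,t]$ comes from a standard bound on $\mathbb{E}[\lambda_u]$ via Gronwall's lemma, as in \cite{boumezoued2016population}.

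First I would handle the pre-reaction regime $s<u<\ell$. From the exponential form of $\lambda$, between jumps it relaxes toward $\lambda_0$ at rate $\delta$, and it jumps by $\overline Y_k$ (resp.\ $Y_i^{bl}$) at each vulnerability time $\overline T_k$ (resp.\ attack time $T_i$); equivalently
\begin{equation*}
d\lambda_u = -\delta(\lambda_u - \lambda_0)\,du + \int_0^\infty y\,\overline Q(du,dy) + \int_0^\infty y\, Q(du,dy) ,
\end{equation*}
where $Q,\overline Q$ are the marked point measures of $(T_i,Y_i^{bl})_i$ and $(\overline T_k,\overline Y_k)_k$. Because the marks are i.i.d.\ and independent of the past, the $\mathbb F$-compensators of the two stochastic integrals are $m^{bl}\lambda_u\,du$ and $\rho\,\overline m\,du$; taking $\mathbb{E}[\cdot\mid\mathcal F_s]$ therefore gives the scalar linear ODE
\begin{equation*}
g'(u) = -(\delta - m^{bl})\,g(u) + \bigl(\delta\lambda_0 + \rho\,\overline m\bigr), \qquad g(s) = \lambda_s .
\end{equation*}
Solving it with the two sub-cases $\delta=m^{bl}$ and $\delta\neq m^{bl}$ and evaluating at $u=\ell$ yields exactly the stated $\mathbb{E}[\lambda_{\ell^-}\mid\mathcal F_s]$, while integrating $g$ over $[s,t]$ reproduces \eqref{E_N_before_l}. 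The post-reaction regime $\ell<s<u<t$ is identical in method: the external measure is switched off, $\lambda$ now relaxes toward $\alpha_0\lambda_0$, and the attack marks have mean $m^{al}$, so $g$ solves the same ODE with $\delta\lambda_0+\rho\overline m$ replaced by $\alpha_0\delta\lambda_0$ and $m^{bl}$ by the post-$\ell$ mark mean; integration gives the second displayed formula.

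For the straddling regime $s<\ell<t$ I would split $N_t = N_\ell + (N_t - N_\ell)$ and condition on $\mathcal F_\ell$. The term $\mathbb{E}[N_\ell\mid\mathcal F_s]$ is the pre-reaction formula taken at $t=\ell$. For the increment, the second phase restarts the intensity from the reset value $\lambda_\ell = \alpha_0\lambda_0 + \alpha_1(\lambda_{\ell^-}-\lambda_0)$, so $\mathbb{E}[\lambda_u\mid\mathcal F_\ell]$ is the post-reaction solution with this initial condition; taking $\mathbb{E}[\cdot\mid\mathcal F_s]$ and using the linear relation $\mathbb{E}[\lambda_\ell\mid\mathcal F_s] = (\alpha_0-\alpha_1)\lambda_0 + \alpha_1\,\mathbb{E}[\lambda_{\ell^-}\mid\mathcal F_s]$, then integrating over $[\ell,t]$, produces \eqref{E_N_after_l}.

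The hard part will be the rigorous compensator step rather than the algebra: identifying the predictable compensator of $\int_0^\infty y\,Q(du,dy)$ as $m^{bl}\lambda_u\,du$ relies on the independence of the marks from $\mathcal F_{u^-}$, and the passage from the SDE to the ODE for $g$ requires the a priori integrability bound on $\mathbb{E}[\lambda_u]$ that licenses Fubini; both are supplied by the marked-point-process framework of \cite{boumezoued2016population}. Once that is in place the remainder is the routine solution of two first-order linear ODEs together with the $\delta=m$ versus $\delta\neq m$ case split.
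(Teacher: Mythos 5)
Your proposal is correct and follows essentially the same route as the paper's Appendix~\ref{annex_a}: reduce everything to $\mathbb{E}\left[N_t \bigm| \mathcal{F}_s\right] = N_s + \int_s^t \mathbb{E}\left[\lambda_u \bigm| \mathcal{F}_s\right]\mathrm{d}u$ via the compensated martingale, obtain a first-order linear ODE for $g(u)=\mathbb{E}\left[\lambda_u \bigm| \mathcal{F}_s\right]$ by compensating the marked point measures (compensators $\lambda_u G_u(\mathrm{d}x)\mathrm{d}u$ and $\rho\,\overline{F}(\mathrm{d}x)\mathrm{d}u$), solve with the case split $\delta = m$ versus $\delta \neq m$, and treat the straddling case $s<\ell<t$ by the tower property together with the reset $\lambda_\ell = \alpha_0\lambda_0 + \alpha_1(\lambda_{\ell^-}-\lambda_0)$, exactly as in Propositions~\ref{prop1} and~\ref{prop2}. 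The one genuine (though modest) difference is how the ODE is derived: you write the jump SDE for $\lambda$ directly, exploiting that the exponential kernel makes $\lambda$ itself Markovian, whereas the paper derives the same ODE from the measure-valued age-structured dynamics $\mathrm{d}\langle Z_t,\phi\rangle$ of \cite{boumezoued2016population}; your route is shorter and more elementary, while the paper's formulation is the one that extends to non-exponential kernels, as the authors note. A useful by-product of your derivation: in the regime $0<\ell<s<t$ your ODE correctly carries the post-reaction mark mean $m^{al}$, in agreement with equation~(\ref{eq_conditional_expectation_case_2}) of Proposition~\ref{prop2}, which shows that the occurrences of $m^{bl}$ (including in the case condition $\delta = m^{bl}$) in the second display of Proposition~\ref{prop3} are a typographical slip rather than a discrepancy in your argument.
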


\begin{proof}
 The proof is detailed in Appendix \ref{annex_a}.
\end{proof}

\subsection{Illustration on a simulated example}

\subsubsection{A simulated trajectory}

We use the thinning algorithm, described in {\cite{ogata1981lewis}}, to simulate the trajectories of the 2P Hawkes process. In this example, the simulation horizon $\tau$ is set to 10 days. The other simulation parameters are detailed in Table~\ref{sim_parameters_expect} below: 

\begin{table}[h]
\caption{Simulation parameters}\label{sim_parameters_expect}
\begin{tabular}{@{}cccccccccc@{}}
\toprule
$\boldsymbol{\lambda_0}$ & $\boldsymbol{\rho}$ & $\boldsymbol{\overline{m}}$ & $\boldsymbol{m^{bl}}$ & $\boldsymbol{\delta}$ & $\ell$ & $\boldsymbol{m^{al}}$ & $\boldsymbol{\alpha_0}$ & $\boldsymbol{\alpha_1}$ & $\boldsymbol{\tau}$ \\
\midrule
0.6 & 0.2 & 0.8 & 0.5 & 1.5 & 3 days & 0.25 & 0.8 & 0.5 & 10 days \\
\botrule
\end{tabular}
\end{table}

\newpage

A trajectory of this process is illustrated in Figure {\ref{fig_all} below. In Figure~\ref{traj_int}, a trajectory of the intensity $\lambda_t$ is plotted over time. There are two different curves plotted on this graph. The green curve represents the intensity of a one-phase Hawkes process, which does not involve any intervention phase. On the other hand, the blue curve represents the intensity of a two-phase Hawkes process, which includes a reaction phase that starts at $\ell = 3$ days. Figure~\ref{traj_count_process} illustrates the counting process that corresponds to the trajectory shown in Figure~\ref{traj_int} and in \ref{illustration_simulated_Hawkes_process_cum}, we display the cumulative number of observed attacks on each day}. 

\begin{figure}[H]%
\centering
\begin{subfigure}{0.3\textwidth}
    \centering
    \includegraphics[scale=0.4]{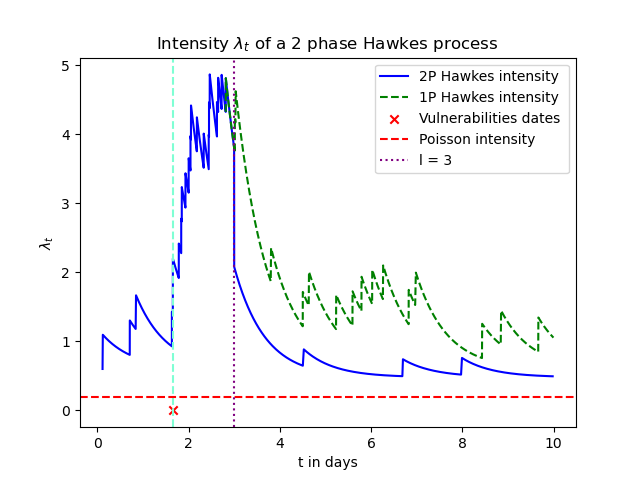}
    \caption{Intensity of a 2P Hawkes process}
    \label{traj_int}
\end{subfigure}
\hfill
\begin{subfigure}{0.45\textwidth}
    \centering
    \includegraphics[scale=0.4]{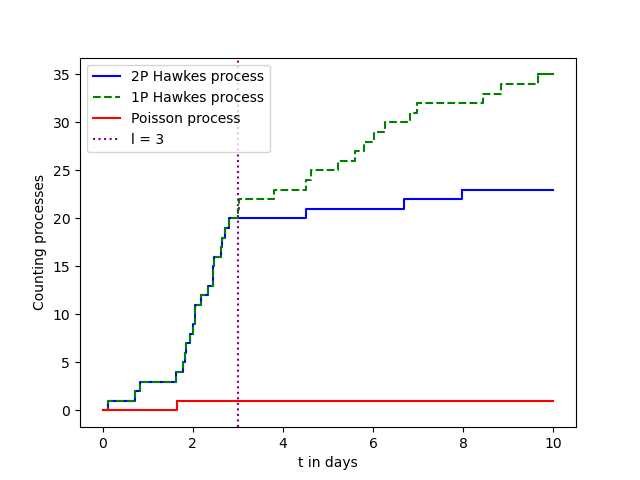}
    \caption{Counting process of a simulated 2P Hawkes process}
    \label{traj_count_process}
\end{subfigure}
\hfill
\begin{subfigure}{\textwidth}
    \centering
    \includegraphics[scale=0.4]{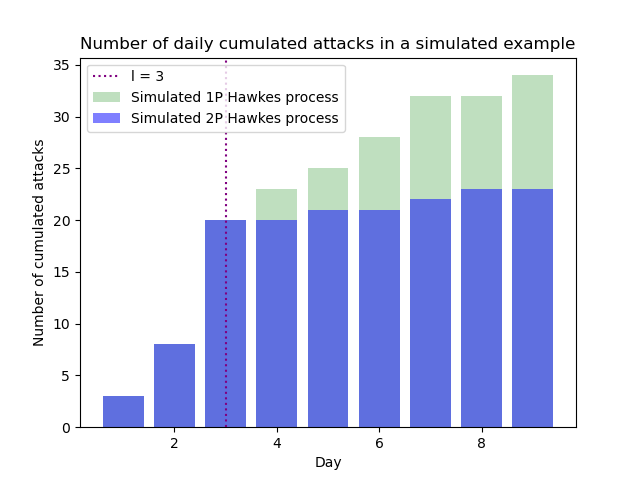}
    \caption{Number of cumulated number of attacks in a simulated example}
    \label{illustration_simulated_Hawkes_process_cum}
\end{subfigure}
\caption{Illustration of a trajectory of a simulated 2P Hawkes process using the thinning algorithm}
\label{fig_all}
\end{figure}

In this trajectory, the arrival of a vulnerability triggers a series of cyber attacks. As these attacks accumulate, the intensity of the process increases, reflecting the self-excited nature of the Hawkes process. This accumulation phase continues until a specific time point $\ell = 3$ days is reached. 

At time $\ell$, the second phase of the process is initiated by an intensity reduction in the intensity. Reduced intensity and smaller $m^{al}$ indicate a decrease in contagiousness. In this trajectory, 3 attacks are observed after the initiation of a second phase. However, if the second phase had not been triggered, 14 attacks would have been observed in this trajectory. This example illustrates the impact of adequate reaction measures implemented in the second phase of the model.

\subsubsection{Analysis of the
expected number of attacks using closed-form formula}

We recall in the table below the parameters of the Hawkes process under study: 

\begin{table}[h]
\begin{tabular}{@{}cccccccccc@{}}
\toprule
$\boldsymbol{\lambda_0}$ & $\boldsymbol{\rho}$ & $\boldsymbol{\overline{m}}$ & $\boldsymbol{m^{bl}}$ & $\boldsymbol{\delta}$ & $\ell$ & $\boldsymbol{m^{al}}$ & $\boldsymbol{\alpha_0}$ & $\boldsymbol{\alpha_1}$ & $\boldsymbol{\tau}$ \\
\midrule
0.6 & 0.2 & 0.8 & 0.5 & 1.5 & 3 days & 0.25 & 0.8 & 0.5 & 10 days \\
\botrule
\end{tabular}
\end{table}

In Figure~\ref{expect_1P_2P} below, the expected number of attacks for both 1P (in green) and 2P (in blue) processes is illustrated  using the closed formulas given in (\ref{E_N_before_l}) and in (\ref{E_N_after_l}) with $s = 0$, unlike the previous graph \ref{illustration_simulated_Hawkes_process_cum} where the thinning algorithm was used. {Using closed-form formulas enables faster computations compared to the thinning algorithm.}

\begin{figure}[H]
\centering
\includegraphics[scale=0.4]{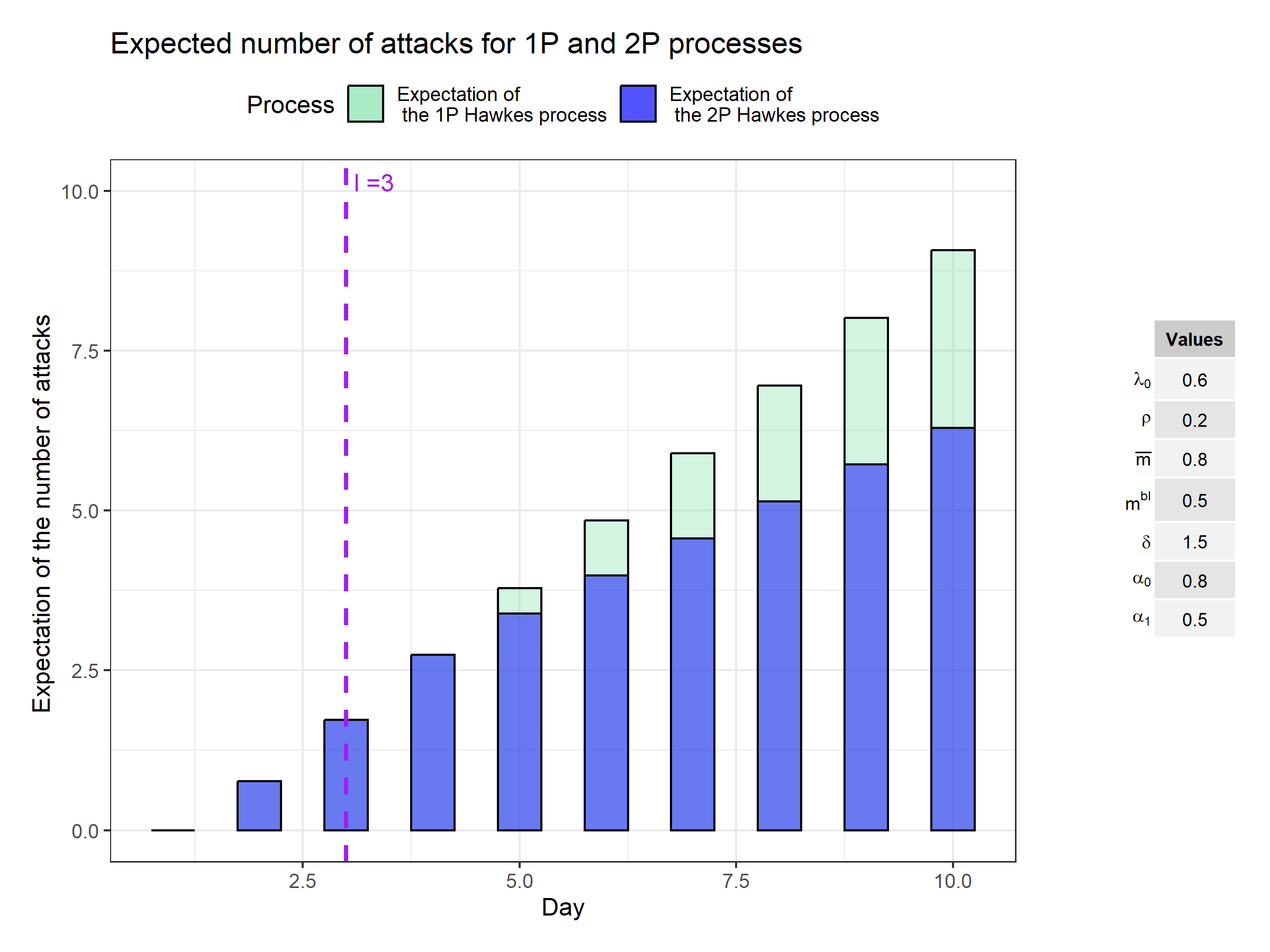}
\caption{Expectation of the 1P and 2P Hawkes process}
\label{expect_1P_2P}
\end{figure}

By introducing a second phase where the number of external events is cut off, the expected number of attacks decreases shortly after the initiation of the second phase of the process. In the following, the sensitivity of the model to the parameters $\alpha_0$ and $\alpha_1$ in terms of the expected number of attacks is illustrated.

\begin{figure}[H]
    \begin{subfigure}{0.45\textwidth}
    \includegraphics[scale=0.3]{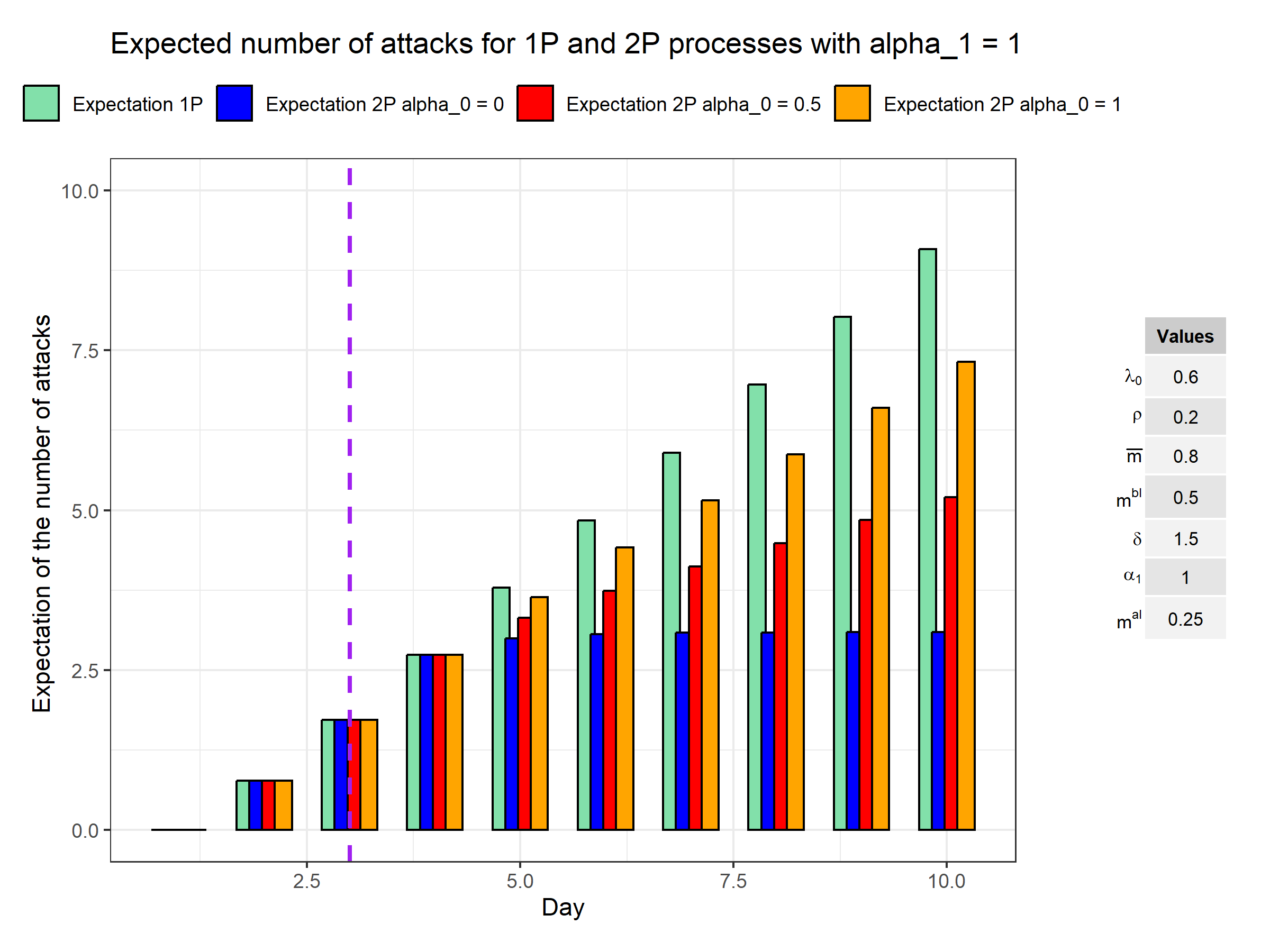}
    \caption{Impact of $\alpha_0$ parameter with $\alpha_1 = 1$}
    \label{impact_alpha_0}
    \end{subfigure}
    \hfill
    \begin{subfigure}{0.45\textwidth}
     \centering
        \includegraphics[scale=0.3]{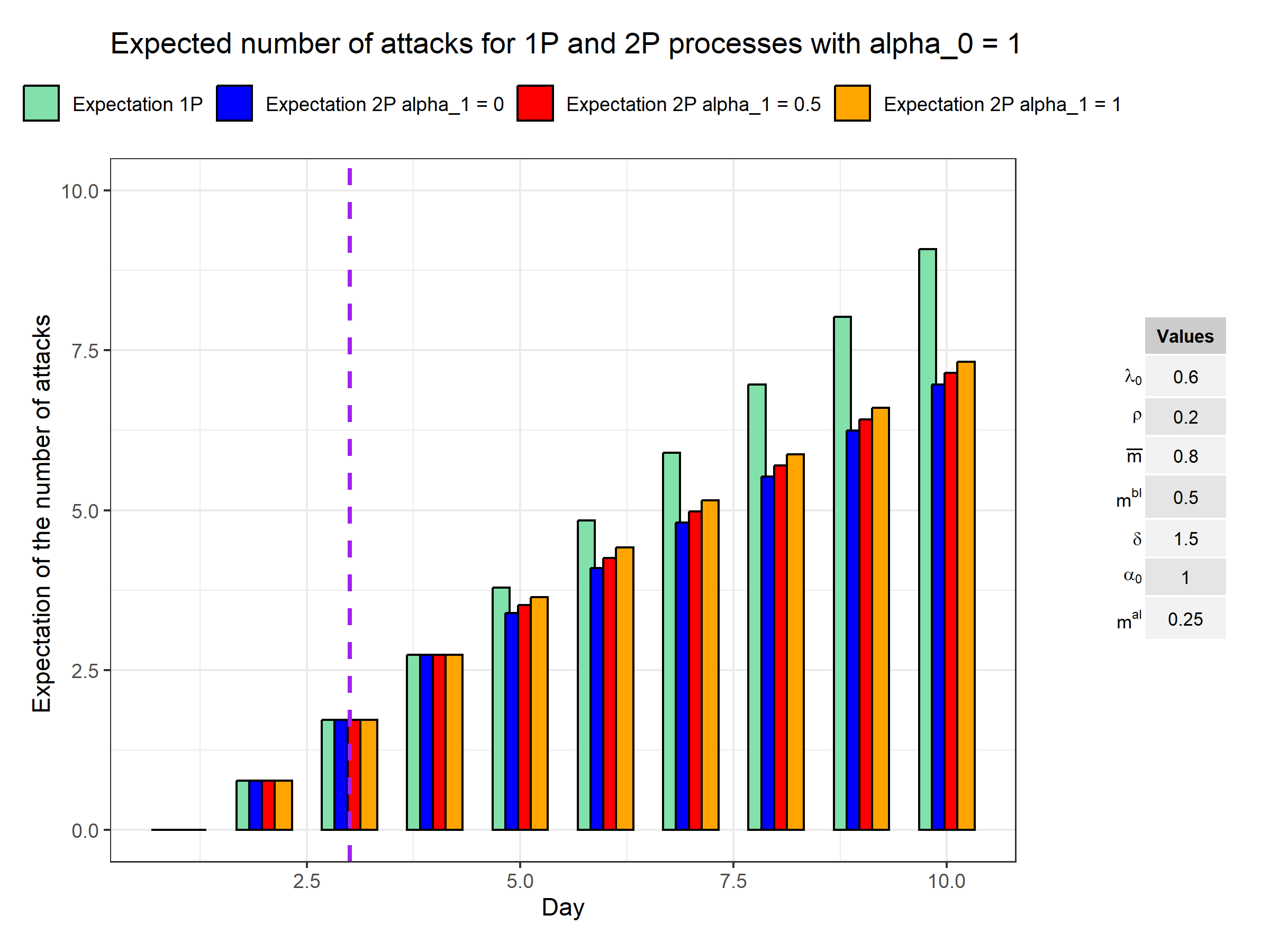}
        \caption{Impact of $\alpha_1$ parameter with $\alpha_0 = 1$}
        \label{impact_alpha_1}
    \end{subfigure}
     \hfill
    \begin{subfigure}{\textwidth}
     \centering
    \includegraphics[scale=0.3]{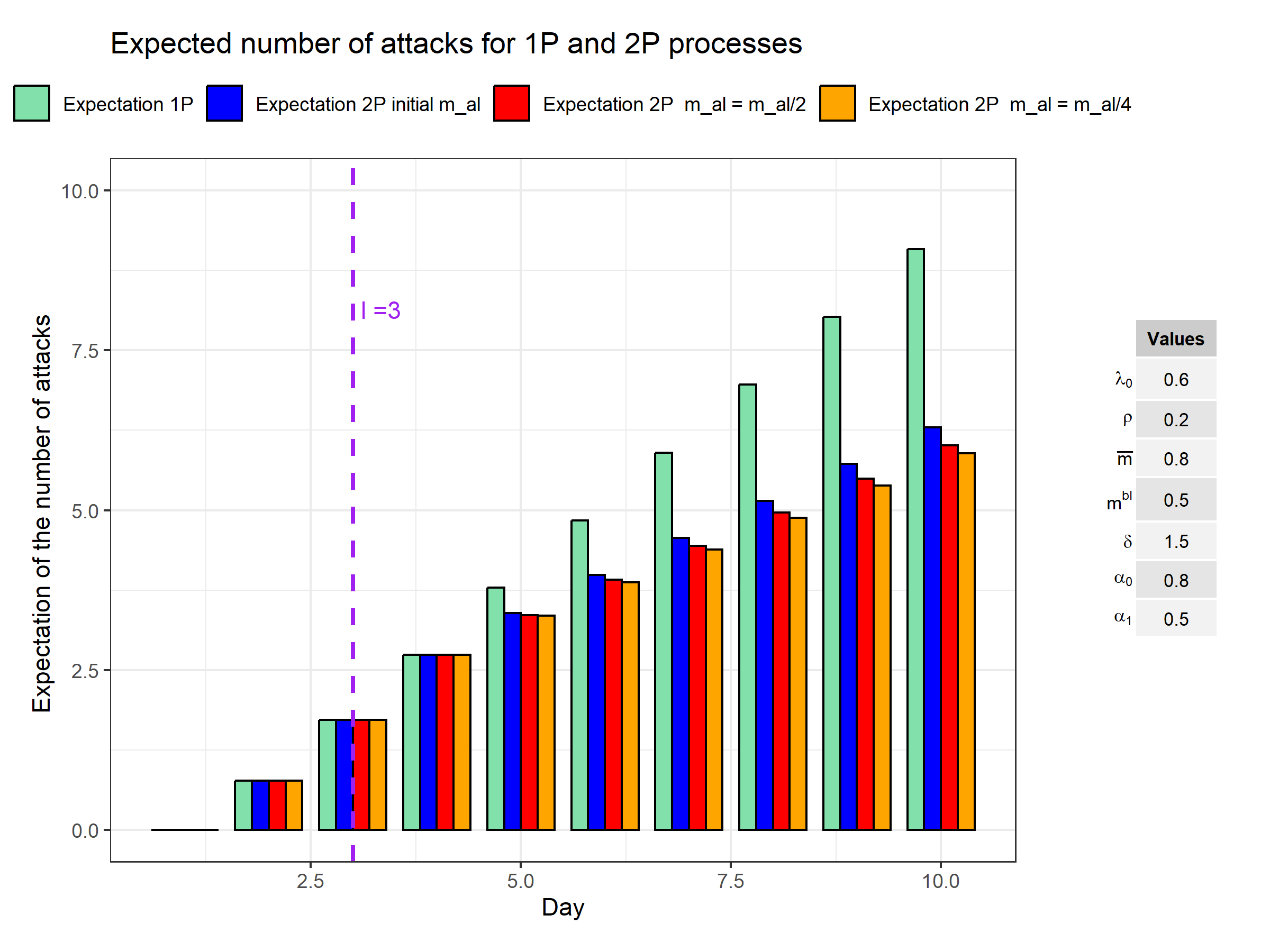}
    \caption{Impact of $m^{al}$ parameter with $\alpha_0 = 0.8$ and $\alpha_1 = 0.5$}
    \end{subfigure}
   \caption{Impact of reaction parameters on the expectation of the 2P Hawkes process }
\end{figure}

In Figure~\ref{impact_alpha_0}, $\alpha_1 = 1 $ (meaning that the contagion effect of the past attacks and vulnerabilities do not decrease) and $\alpha_0$ is varied. Conversely, in Figure~\ref{impact_alpha_1}, $\alpha_0 = 1 $ (meaning that the baseline intensity is not decreased) and $\alpha_1$ is varied. The other parameters remain unchanged.

For this parameters configuration (see Table~\ref{sim_parameters_expect}),
decreasing  $\alpha_0$ has a stronger impact on the number of expected attacks at time $t$ compared to that of decreasing $\alpha_1$: at each time step the three bars corresponding to the variation of $\alpha_1$ are close to one another compared with the three bars corresponding to the variation of $\alpha_0$. {This is likely attributed to the fact that $\alpha_1$ is modulated by the exponential decay. In this configuration, the $\delta$ parameter is set at 1.5, it is quite strong and therefore it can explain this empirical result. We observe in Figure~\ref{fig_impact_alpha_1} that a smaller decay rate of 0.6 leads to a more pronounced impact of $\alpha_1$.}
\begin{figure}[H]
    \begin{subfigure}{0.45\textwidth}
     \centering
    \includegraphics[scale=0.3]{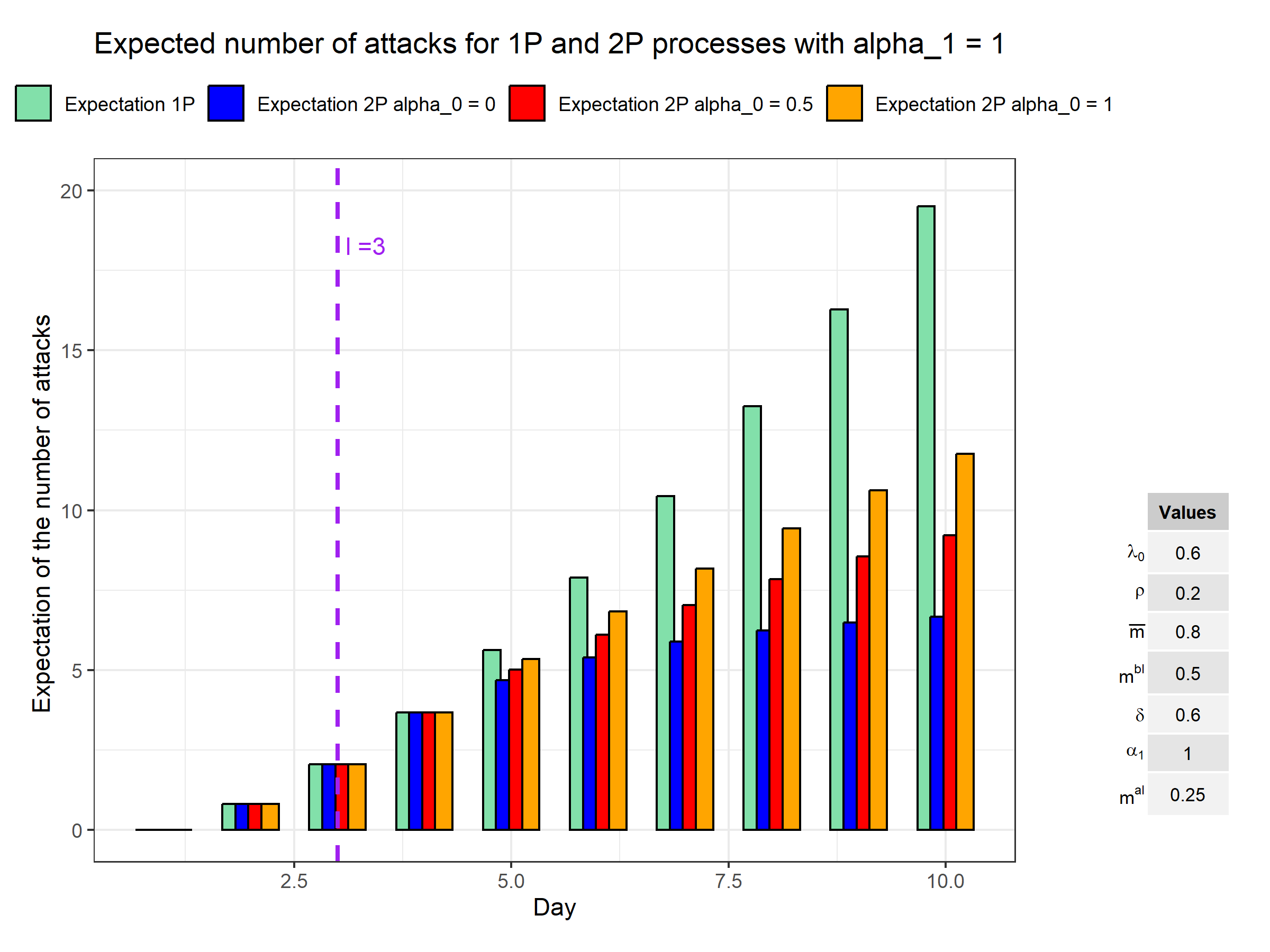}
    \caption{Impact of $\alpha_0$ parameter with $\alpha_1 = 1$}
    \end{subfigure}
    \hfill
    \begin{subfigure}{0.45\textwidth}
     \centering
        \includegraphics[scale=0.3]{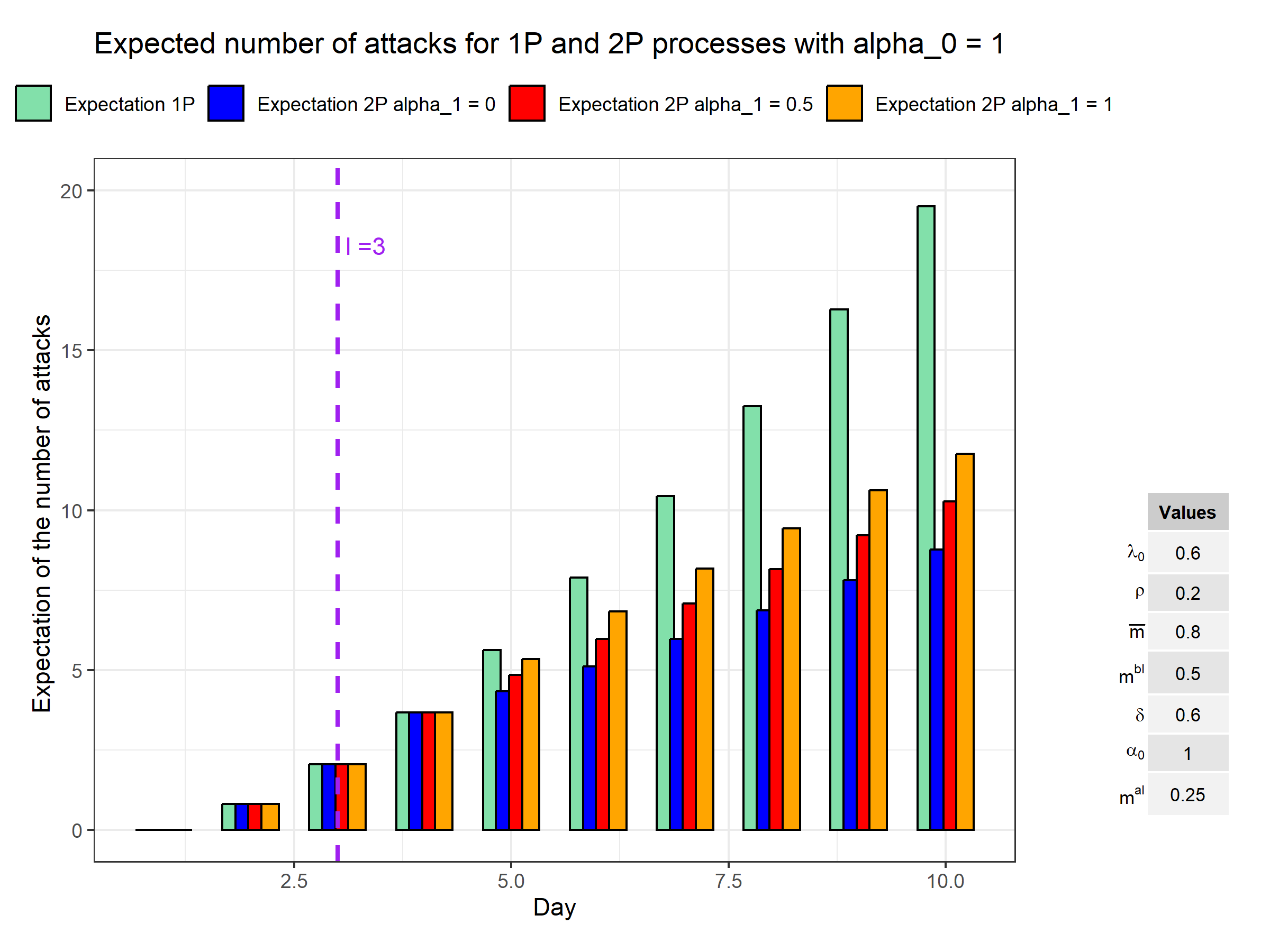}
        \caption{Impact of $\alpha_1$ parameter with $\alpha_0 = 1$}
        \label{fig_impact_alpha_1}
    \end{subfigure}
    \caption{Impact of $\alpha_0$ and $\alpha_1$ parameters for $\delta = 0.6$}
\end{figure}

\vspace*{-0.8cm}
By decreasing the decay parameter $\delta$ and adjusting the $\alpha_1$ parameter, we can effectively control the contagiousness of events that occurred before $\ell$. By selecting a small $\alpha_1$ value, we can better demonstrate the impact of this parameter on the intensity and contagiousness of cyber attacks. This allows us to study the effectiveness of different response and preventive measures tailored to the attack's profile and level of contagiousness.


\section{Choice of parametric inference method of the non-stationary one-phase Hawkes process on simulated data}
\label{section_inference}

\subsection{Calibration strategies}

Based on simulated data, we explore in the following the performance of different methods for the calibration of the Hawkes model with external excitation. We consider a time window $[0,\tau]$ for all observations in the dataset (both vulnerabilities and attacks). We then consider a time $s\in [0,\tau]$ at which the inference starts, i.e. the parameters are estimated to explain at best the process dynamics over the period $[s,\tau]$, conditional on information up to time $s$. We assume that the model is in its first phase {as data about the second phase is not observable, as detailed in Section~\ref{justif_1P}. To simplify notation, what was previously denoted as $m^{bl}$ in Section~\ref{m_bl} within the characterisation of the marks is now simply denoted as $m$, since the second phase is not calibrated. Hereafter, we consider a simplified version of $\lambda_t$  with deterministic marks: $\lambda_t = \lambda_0  + \sum_{\overline{T_k} < t}\overline{m} e^{-\delta (t-\overline{T_k})} + \sum_{T_i < t}m e^{-\delta (t-T_i)} $}, hence the parameters to estimate are the following: 
$$
(\lambda_0,\rho,\overline{m},m,\delta).
$$

The first approach {($\text{MSE}^\text{Int}$ in Table~\ref{comparison_criteria})}, from \cite{dassiostwo}, relies on the Mean Square Error (MSE) to minimize the distance between the observed number of cyber attacks (internal jumps) and the expected number of attacks as predicted by the model. Let $n(t)$ be the observed cumulative number of cyber attacks up to time $t$; then the $\text{MSE}^\text{Int}$ computed on observation intervals of length $\Delta$ is defined as:
\begin{equation*}
\text{MSE}^\text{Int} = \frac{1}{\tau - s }\sum_{k=0}^{\lfloor \frac{\tau - s}{\Delta} \rfloor-1} \left\{\mathbb{E} \left[ N_{s +(k+1)\Delta } - N_{s+k \Delta}  \bigm|   \mathcal{F}_s \right] -  (n( s +(k+1)\Delta ) - n(s+k \Delta))\right\}^2,    
\end{equation*}
where we assume that the start time $s$ and the end time $\tau$ for the inference are taken on the time grid with step $\Delta$.

The advantage of this approach is that it requires few observations, i.e. only the number of cyber attacks at the discrete times $s + k \Delta$, while the conditional expected number of attacks can be computed in closed-form in the model as derived in Section~\ref{closed_formulas_expectation}. In particular, it does not require the knowledge of the exact times of internal jumps, which is valuable here as we know that there may be uncertainty around the precise date of the attack. Also, it does not require information about external jumps, as in \cite{dassiostwo} regarding the Covid-19 use case, where external shocks are features of the model that are not directly interpretable. However, as we observe in the numerical experiment that follows, the $\text{MSE}^\text{Int}$ approach on internal jumps only is not directly identifiable. This is due to a balance between parameters that will provide the same expected values of number of attacks, at least at some points in time. From Equation (\ref{E_N_before_l}), this is particularly true within the external dynamics when the product between the frequency and severity of external shocks $\rho \overline{m}$ appears, as well as for parameters between internal and external shocks so that the ratio $\frac{(\rho \overline{m} + \delta \lambda_0) }{\delta - m}$ is preserved.
{This ratio $\frac{(\rho \overline{m} + \delta \lambda_0) }{\delta - m} = \frac{\rho \lVert \overline{\phi} \rVert + \lambda_0}{1 - \lVert \phi \rVert} $ can also be interpreted as an ergodicity parameter, as detailed in Section~\ref{inference_results_section}.}

An attempt to overcome the identifiability issue is proposed in the {second approach ($\text{MSE}^\text{Ext}$ in Table~\ref{comparison_criteria})} where the MSE is augmented with the distance between predicted and observed number of external shocks. In our approach, the external shocks are directly interpretable since they do represent occurrences related to the disclosure of vulnerabilities. Although there may be uncertainty on the precise date, especially reporting delays, it can be argued that the number of vulnerabilities per time interval is still a useful source of information. Let $\bar{n}(t)$ be the observed cumulative number of vulnerabilities up to time $t$; then the augmented MSE reads:
\begin{align*}
\text{MSE}^\text{Ext} & = \frac{1}{\tau - s }\sum_{k=0}^{\lfloor \frac{\tau - s}{\Delta}\rfloor-1} \left\{\mathbb{E}[N_{s +(k+1)\Delta } - N_{s+k \Delta}  \bigm|   \mathcal{F}_s] -  (n( s +(k+1)\Delta ) - n(s+k \Delta))\right\}^2  \\
&\quad
+ \left\{
\mathbb{E}\left[\overline{N}_{s +(k+1)\Delta} - \overline{N}_{s+k \Delta} \bigm|   \mathcal{F}_s \right]-\left(\overline{n}(s +(k+1)\Delta) - \overline{n}(s+k \Delta ) \right)
\right\}^2,  
\end{align*}
where we note that $\mathbb{E}\left[\overline{N}_{s +(k+1)\Delta} - \overline{N}_{s+k \Delta} \bigm|   \mathcal{F}_s \right]=\rho \Delta$. From there, the virtue of this approach appears since it is expected to better control the external frequency parameter $\rho$ driving the vulnerabilities occurrences. However, one drawback remains, which lies in the fact that the ratio $\frac{\rho \lVert \overline{\phi} \rVert + \lambda_0}{1 - \lVert \phi \rVert}$ is still uncontrolled. For example, while $\rho$ is fixed, there may be compensation between $\bar{m}$, $m$ and $\delta$.

Finally, we consider a third approach {(Likelihood in Table~\ref{comparison_criteria})} to model inference based on maximizing the likelihood, which can be derived in a recursive manner for the point process of interest. Precise times of both attack occurrences and vulnerability dates are used here, as opposed to the methods based on the MSE which only require the number of attacks or vulnerabilities per time interval. 

The likelihood is written:
\begin{equation} 
    \mathcal{L}=\exp \left(-\int_s^\tau \lambda_u \mathrm{~d} u\right) \prod_{n=(N_{s}+1)}^{N_{\tau}}\hspace{-4mm}(\lambda_{t_n}) \hspace{4mm}\rho^{(\overline{N}_{\tau} - \overline{N}_{s}) } \hspace{1mm} \exp \left(-\rho\left(\overline{t}_{\overline{N}_{\tau}}-\overline{t}_{\overline{N}_s}\right)\right)
    \label{likelihood_equ2}
\end{equation}
where $t$ and $\bar{t}$ respectively denote the observed times of attacks and vulnerability discoveries. The likelihood computation is detailed in Appendix A in \cite{rambaldi2015modeling}.
To some extent, the assumptions about data quality for the likelihood approach is stronger than the MSE based methods. We note that Bayesian methods do exist to tackle the inference of Hawkes processes with censored and / or incomplete data, see e.g. \cite{shelton2018hawkes} and \cite{boyd2023inference}, and references therein - we believe that these methods could be used to allow for uncertainty regarding the quality of precise time occurrences in the datasets (for both cyber attacks and vulnerability discoveries); this is left for further research.  

Overall, the advantage of the maximum likelihood method is to capture the full statistical properties of the observed sample, and to solve identifiability issues, as shown whith a practical example in Section~\ref{inference_results_section}.

The simulation parameters are given in Table~\ref{sim_parameters_table}:

\begin{table}[h]
\caption{Simulation parameters}\label{sim_parameters_table}
\begin{tabular}{@{}ccccccc@{}}
\toprule
$\boldsymbol{\lambda_0}$ & $\boldsymbol{\rho}$ & $\boldsymbol{\overline{m}}$ & $\boldsymbol{m}$ & $\boldsymbol{\delta}$ & $\tau$ & s \\
\midrule
0.6 & 0.2 & 0.8 & 0.5 & 1.5 & 1095 days & 548 days \\
\botrule
\end{tabular}
\end{table}

The Nelder-Mead algorithm of the Python package Scipy is used. The execution times correspond to an Intel Weon 4310 CPU server operating at a 2.1 GHZ frequency, equipped with 24 processors and 100 GB of memory.

\subsection{Stationarity and endogeneity of the Hawkes process with external excitation} \label{endo_Hawkes}

By analogy with population theory, the Hawkes process with external excitation, also called Hawkes process with general immigrants, defines the following dynamics, see \cite{boumezoued2016population}:
\begin{itemize}
\item External immigrants arrive with rate $\rho$ over time, and have age 0 ; they give birth to individuals (called internal immigrants) in the Hawkes population with birth rate {$\overline{\phi}(a)=\overline{m} e^{-\delta a}$} while their age $a$ increases with time.
\item In addition from births described above, the Hawkes population is augmented by: (1) additional internal immigrants in the Hawkes population (direct immigration process), with rate $\lambda_0$, and (2) endogenous births, where all individuals with any age $a$ in the Hawkes population give birth to new individuals with birth rate {$\phi(a)=m e^{-\delta a}$}.
\end{itemize}
The latter component (endogenous births) drives the degree of endogeneity of the system. To further describe this component, let us notice that any individual in the Hawkes population will give birth to $\lVert \phi \rVert$ individuals over its entire lifetime. Then, each of these (say, children), gives also birth to $\lVert \phi \rVert$ new individuals in total at the next generation, so the number of grand-children is $\lVert \phi \rVert^2$, etc, so that for each individual in the Hawkes population, a so-called cluster of individuals is created with size below, which is finite under the stationary case $\lVert \phi \rVert < 1$:
$$
\sum_{n=1}^{\infty} \lVert \phi \rVert^n = \frac{\lVert \phi \rVert}{1- \lVert \phi \rVert}.
$$
Then, for a given internal immigrant (either coming from births of external immigrants, or from the direct immigration process with rate $\lambda_0$), the ratio of cluster size versus total average population (immigrant and its cluster), also called branching ratio, is given by:
$$
\frac{\frac{\lVert \phi \rVert}{1- \lVert \phi \rVert}}{1+\frac{\lVert \phi \rVert}{1- \lVert \phi \rVert}} = \lVert \phi \rVert.
$$

As such, $\lVert \phi \rVert$ is a direct measure, under the stationary case, of the degree of endogeneity of the system, which explains the popularity of the branching ratio in the analysis of dynamical systems e.g. in geophysics, finance, neurobiology and social behavior, see e.g. \cite{hardiman2014} and references therein. While the branching ratio has a nice interpretation in the stationary case (i.e., when smaller than 1, and considering the limiting distribution of the Hawkes process), however when the Hawkes process is analyzed in the short term (i.e. when the limiting distribution is not attained), this information can be completed by non-stationary measures. To this aim, we introduce the share of the intensity due to internal excitation, as captured by $\lambda_t^{int}$ and $\lambda_t^{ext}$  defined later in Section~\ref{intensity_decomposition}.

\subsection{{Inference results}} \label{inference_results_section}

{Recall that }the calibration process involves estimating the values of five parameters:
{
$$
(\lambda_0,\rho,\overline{m},m,\delta).
$$
}

Given that the results of the MSE calibration method are less accurate, an attempt has been made to improve them through strategies detailed in Appendix \ref{annex_mse_calibration_strategies}. The step $\Delta$ for the MSE calibration method is set equal to two days. This choice is also detailed in Appendix \ref{annex_step_choice_mse}.

Table~\ref{comparison_criteria}  below provides some of the criteria used to compare the three approaches ($\text{MSE}^\text{Int}$, $\text{MSE}^\text{Ext}$, Likelihood). Among these criteria, $\lVert \phi \rVert$ is relevant since it is indicative of the Hawkes process regime. Another crucial quantity to highlight is $\frac{\rho \lVert \overline{\phi} \rVert + \lambda_0}{1 - \lVert \phi \rVert}$, where $\lVert \overline{\phi} \rVert = \frac{\overline{m}}{\delta}$. This term is significant for two reasons: 

\begin{enumerate}
\item It aligns with the ergodicity result of a standard linear Hawkes, see \cite{stabile2010risk}, where the baseline intensity $\lambda_0$ is deterministic. Here, the exogenous part is stochastic, hence the presence of $\rho \lVert \overline{\phi} \rVert$ in the numerator.
\item It appears within the expectation presented in Proposition \ref{prop3} when applying the Mean Squared Error (MSE) estimation method.
\end{enumerate}  
To summarize, despite attempts to enhance the MSE estimation method's results, it does not provide exact parameter estimates or accurately capture the Hawkes process regime. While the MSE methods accurately estimate the ergodicity ratio  $\frac{\rho \lVert \overline{\phi} \rVert + \lambda_0}{1 - \lVert \phi \rVert}$ and the process's expectation with fewer data requirements, our objective here is to achieve accuracy across all parameters. The likelihood method enables this precision and offers an estimation for the entire distribution, not just the mean. This comprehensive approach is crucial to estimate reserves in cyber risk management. Therefore, the likelihood estimation method is the preferred choice for inference on real data.

\clearpage

\begin{sidewaystable}[h]
\caption{Comparison of calibration methods}\label{comparison_criteria}
\begin{tabular*}{\textheight}{@{\extracolsep\fill}p{2cm}p{3cm}p{3cm}p{3cm}l}
\toprule%
\textbf{Method} & \textbf{Data requirements} & \textbf{Unicity of solution}  & \textbf{Av. Exec. time in (s) on 1000 runs} & $\boldsymbol{\lVert \phi \rVert}$ \\
\midrule
$\text{MSE}^{\text{Int}}$ & Number of attacks in given intervals & Multiple possible solutions &  18.73 s &  Misestimated  \\
$\text{MSE}^{\text{Ext}}$ & Number of attacks and vulnerabilities in given intervals & Multiple possible solutions &  19.05 s &  Misestimated \\
Likelihood & Dates of attacks and vulnerabilities required & One solution &   22.14 s &  Accurate \\
\midrule
\textbf{Method} & \textbf{Convergence} & \textbf{Parameter estimation accuracy} & \textbf{Sample sensitivity} & $\boldsymbol{\frac{\rho \lVert \overline{\phi} \rVert + \lambda_0}{1 - \lVert \phi \rVert}}$\\ 
\midrule
$\text{MSE}^{\text{Int}}$ & Sensitive to the initialisation parameters & No unique solution & Very sensitive &  Accurate    \\
$\text{MSE}^{\text{Ext}}$ & Sensitive to the initialisation parameters & $\rho$ is estimated correctly but not the others & Very sensitive except for$\rho $ &  Accurate     \\
Likelihood & Converges regardless of the initialisation parameters & Parameters close to the true ones & Not sensitive  &  Accurate   \\
\botrule
\end{tabular*}
\end{sidewaystable}

\clearpage

\section{Parametric inference of the non-stationary one-phase Hawkes process on real data} \label{inference_section}

\subsection{Datasets}

To conduct the analysis, it is necessary to have data on the dates of cyber attacks, denoted as $(t_n)_n$, as well as information about vulnerabilities represented by $(\overline{t}_k)_k$. Vulnerabilities are identified using CVE (Common Vulnerabilites and Exposures) identifiers, which can be obtained by searching vulnerability databases, such as the National Vulnerability Database. Having a vulnerability assigned with a CVE identifier allows us to obtain the disclosure dates $(\overline{t}_k)_k$ associated with the vulnerability. The CVE identifier also provides the Common Vulnerability Scoring System (CVSS) scores, which evaluates the severity of each vulnerability using a score that ranges from 0 for no impact to 10 for a severe impact.

\subsubsection{Attacks databases}

Initially, several databases were considered to collect this information, including the PRC database, the VERIS community database, and the Hackmageddon database, among others.

In the PRC database, cyber events are exclusively focused on data breaches. These breaches can occur through various means, such as the loss of physical documents or computers, insider actions like leaking information, or hacking incidents. When it comes to hacking incidents, the descriptions of these events do not specify whether a software vulnerability was exploited or not. Consequently, trying to figure out the specific vulnerabity and the CVE identifier from these descriptions can be quite tricky and complex. Since the analysis aims to study attacks involving IT vulnerabilities, extracting the relevant $(\overline{t}_k)_k$ dates from the PRC database can be quite challenging. 

Although it contains information on multiple types of attacks beyond data breaches, the VERIS Community database has a limited number of attacks with the associated CVE identifiers. Furthermore, the overall number of reported attacks in this database decreases each year and is relatively low. 

As a result, we mainly focus on the Hackmageddon database, {which includes several types of attacks including those associated with a specific CVE identifier related to a vulnerability exploit.} For an overview of the key figures within the database, refer to \cite{hackmageddon}.

\subsubsection{Vulnerability databases}

Regarding the vulnerability dates $(\overline{t}_k)_k$, we have tested three configurations, {each one is included within the other.}

The first configuration assumes that the Hackmageddon database is comprehensive and includes all the necessary information. Consequently, in this initial approach, $(\overline{t}_k)_k$ corresponds to the dates when vulnerabilities associated with a Hackmageddon attack are published in the NVD database. To be concise, when we refer to Hackmageddon vulnerabilities, we are referring to CVE identifiers extracted from the Hackmageddon database that are associated with cyber attacks from Hackmageddon. For these vulnerabilities, we also have the corresponding dates, denoted as $(\overline{t}_k)_k$, sourced from the NVD database.   

However, it is crucial to note that the assumption of comprehensiveness of the Hackmageddon database is not validated in practice, as the vulnerabilities recovered from the Hackmageddon attacks do not necessarily align with those found in the KEV (Known Exploited Vulnerabilities) database which contains all known exploited vulnerabilities, including those that were exploited and reported in the Hackmageddon database. This database is available at \cite{KEV} and is maintained by the Cybersecurity and Infrastructure Security Agency (CISA) in the US. As a result, we have tested a second configuration in which we consider all vulnerabilities from the KEV database.

The main drawback of the first two approaches is that they assume that all vulnerabilities necessarily lead to an attack on the Hackmageddon database, which is not the case in our model. A vulnerability arrival increases the intensity of the Hawkes process but it does not necessarily lead to the arrival of an attack event. To address this limitation, we have adopted a third approach, which involves taking data from the NVD (National Vulnerability Database) vulnerability database to ensure a more comprehensive coverage of vulnerabilities. The latter database is a U.S. government repository of vulnerabilities, see \cite{NVD} for more details. 

\subsubsection{Description of the Hackmageddon database}

Paolo Passeri founded the Hackmageddon database in 2011 with the purpose of gathering and documenting significant cyber attacks based on his expert judgment. This repository is updated regularly, with timelines of cyber attacks being published on a biweekly basis and based on publicly available incidents. {For each attack, the following information is available: 
   \begin{itemize}
    \item \textbf{Date}: the date at which the reported cyber attack occurred.
    \item \textbf{Attack}: the type of technique used to carry out the attack. For example, this could be indicated as ransomware or malware among other multiple attack techniques. In the case of attacks exploiting a computer vulnerability, a CVE identifier is specified in this field.  
    \item \textbf{Attack class}: it is classified into four categories: cyber crime, cyber espionage, cyber war, and hacking.
    \item \textbf{Country}: it is the country where the cyber attack occurred. When multiple countries are affected, it is indicated as "Multiple".
    \item \textbf{Target}: the name of the entity targeted by the attack. It could be indicated as multiple when several organizations are targeted.
    \item \textbf{Target class}: the industry of the victim. When multiple industries are impacted, it is indicated as "Multiple".
    \item \textbf{Author}: the actor or organization behind the cyber attack. This information is not always available.
    \end{itemize}
}

In what follows, descriptive statistics concerning the Hackmageddon database are be provided. It is important to note that these specific statistics are not exploited in this paper's modeling. But the same methodology could be applied to customize the modeling for specific countries or sectors if desired. These statistics include for example the countries found in this database along with the distribution of industries within it.

\begin{figure}[h]
    \begin{subfigure}{0.45\textwidth}
    {\small
    \begin{tabular}{@{}llp{5cm}@{}}
    \toprule
    Country & Percentage of attacks \\ & in Hackmageddon \\
    \midrule
    US & 33.2\% \\
    Multiple countries & 27.7\% \\
    UK & 3.3\% \\
    Italy & 2.6\% \\
    Canada & 1.8\% \\
    Russia & 1.7\% \\
    India & 1.7\% \\
    Other countries & 24.2\% \\
    \botrule
    \end{tabular}
    }
    \caption{Percentage of attacks in Hackmageddon by country}
    \label{tab:hackmageddon-attacks}
    \end{subfigure}
    \hfill
    \begin{subfigure}{0.45\textwidth}
     \centering
     \includegraphics[scale=0.17]{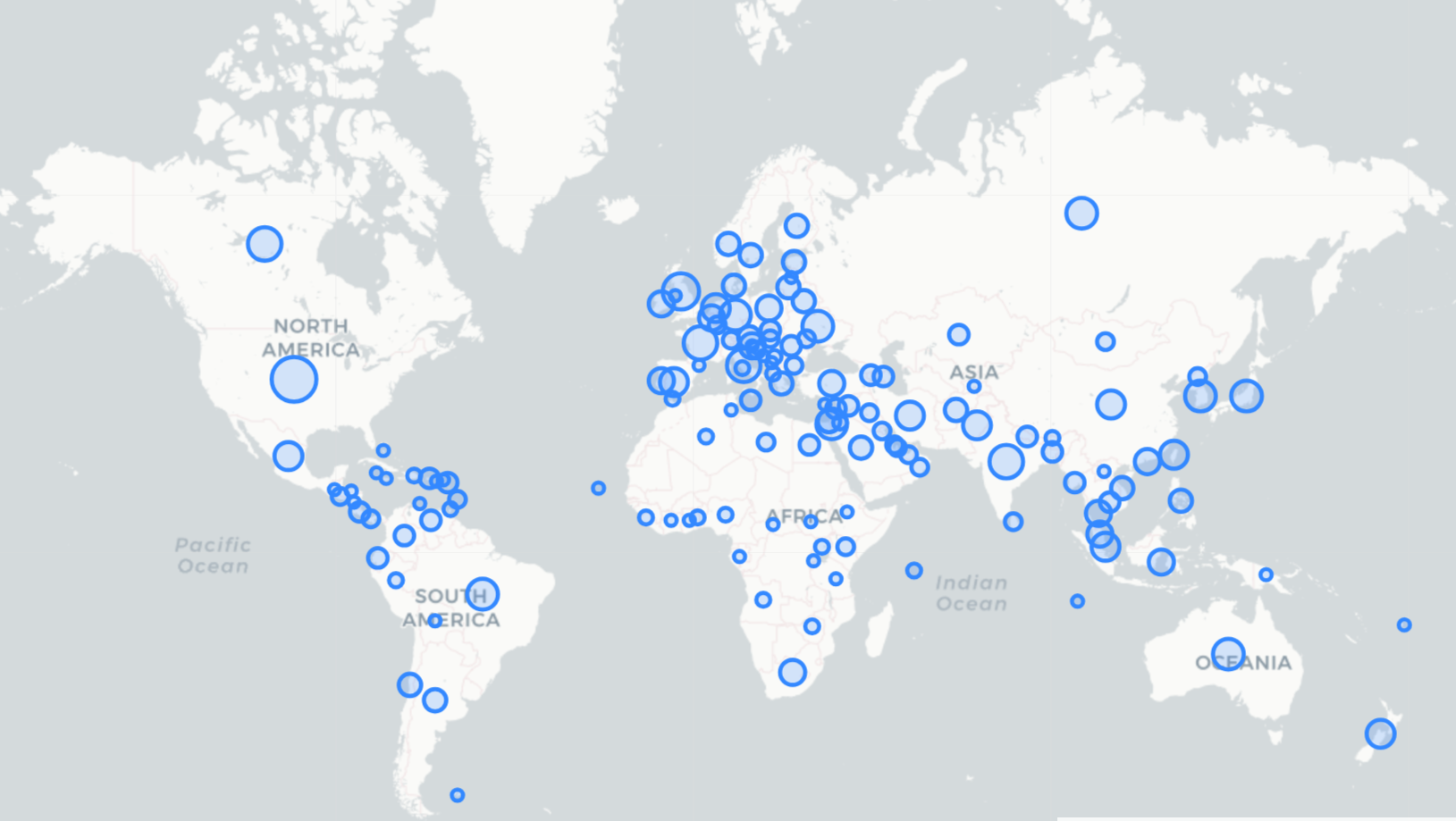}
    \caption{Countries represented in Hackmageddon database}
    \label{map}
    \end{subfigure}
    \caption{Attacked countries and number of attacks per country in Hackmageddon }
\end{figure}

Figure~\ref{map} above shows that the Hackmageddon database includes many regions worldwide, making it more diverse compared to databases like PRC which mainly focuses on the US. However, the United States still holds the highest number of attacks with 33.2 \%, followed by the category "Multiple", which indicates attacks that target several countries simultaneously, highlighting the systemic nature of this risk. Since the remaining countries individually represent less than 1\% of Hackmageddon attacks, a category 'Other' was established to encompass them.

In Table~\ref{tab:percentage-by-industries}, the industries repartition is illustrated: 
\begin{table}[h]
\caption{Percentage of Attacks in Hackmageddon by Industry}\label{tab:percentage-by-industries}
\begin{tabular}{@{}p{5cm}p{6cm}l@{}}
\toprule
\textbf{Industries} & \textbf{Description} & \textbf{Percentage} \\
\midrule
Multiple Industries & 'Multiple' category added for attacks with multiple victims & 19.87\% \\
Individual & Involves individuals and not businesses & 13.88\% \\
Public administration, defense \& social security & Includes responsible government agencies  & 11.70\% \\
Human health and social work activities & Encompasses organizations involved in healthcare and social work. & 10.99\% \\
Financial and insurance activities & Includes financial institutions and insurance companies. & 10.72\% \\
Education & Institutions and organizations related to education and academia. & 6.41\% \\
Professional scientific and technical activities & Involves professional services and technical consulting firms. & 5.28\% \\
Information and communication & Encompasses information technology and communication sectors. & 3.43\% \\
Manufacturing & Involves organizations engaged in manufacturing goods. & 3.14\% \\
Arts entertainment and recreation & Includes sectors related to arts, entertainment, and recreation. & 2.63\% \\
Wholesale and retail trade & Encompasses wholesale and retail trade activities. & 2.79\% \\
Other & Includes attacks on sectors like energy, food services, etc. & 9.17\% \\
\bottomrule
\end{tabular}
\end{table}

The Hackmageddon database covers various sectors. It is important to note that the most represented category is 'Multiple' where multiple sectors are simultaneously affected by a single attack. This highlights the systemic component in cyber risk. Among the other sectors, it is observed that public administration, healthcare, finance, and education sectors appear to experience a higher number of attacks. Two possible explanations can account for this. First, in the case of public sectors, there may be a better reporting a higher number of cyberattacks compared to the private sector due to potentially fewer financial stakes, which could contribute to their elevated representation in the dataset. On the other hand, the prominence of the finance sector could be attributed to its inherent attractiveness to cyber attackers, given its higher financial resources compared with other sectors.

The following graphs show the number of attacks from 2018 to 2022, focusing on two key aspects: the correlation between attacks in consecutive months and the trend of attacks caused by exploiting vulnerabilities.

The plot in Figure~\ref{autocorr_Hackmag} shows a significant autocorrelation in the number of attacks, since the correlation coefficient is 90.63\%. {This aligns with previous findings conducted on the PRC database in \cite{Multi-Variate_HawkesBessy}.}

The second plot highlights the trend of attacks attributed to exploiting vulnerabilities in a system or software. It suggests a significant rise of this type of attacks, specially between 2020 and 2021 and a slight decrease in 2022, that seems to indicate that some preventive measures have been taken.

\begin{figure}[H]
    \begin{subfigure}{0.45\textwidth}
     \centering
   \includegraphics[scale=0.35]{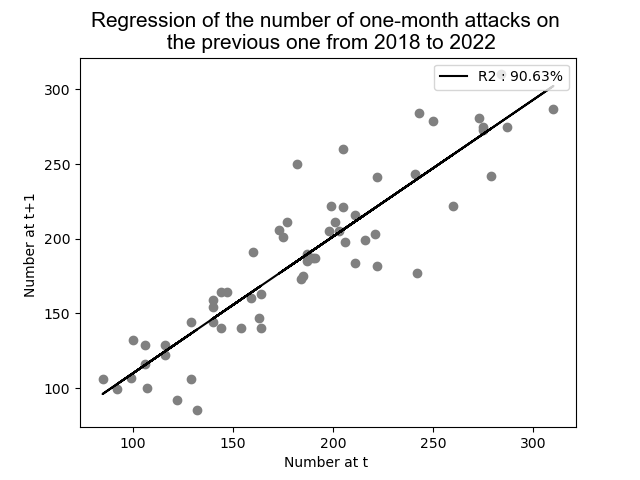}
    \caption{Regression of the number of one-month attacks on the previous one in Hackmageddon from 2018 to 2022}
    \label{autocorr_Hackmag}
    \end{subfigure}
    \hfill
    \begin{subfigure}{0.45\textwidth}
     \centering
        \includegraphics[scale=0.35]{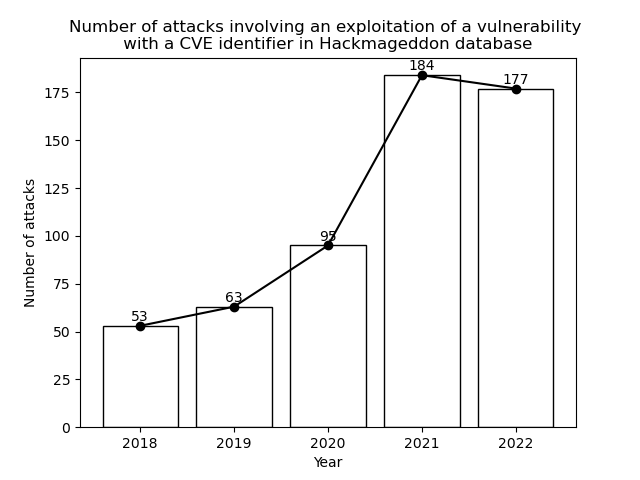}
 \caption{Number of attacks involving a vulnerability with a CVE identifier}
    \end{subfigure}
    \caption{Regression of the number of one-month attacks on the previous one in Hackmageddon and the number of attacks involving a vulnerability with a CVE identifier in the hackmageddon database}
    \label{figure_7}
\end{figure}

\vspace*{-1cm}
It is worth mentioning here that the calibration in the upcoming analysis covers the entire database.  Additionally, based on Figure~\ref{figure_7}, we can see that attacks exploiting vulnerabilities with a CVE identifier are less numerous than for other types of attacks. It should also be noted here that attacks attributed to a CVE identifier significantly increased between 2020 and 2021. This might be a consequence of the Covid-19 crisis: there were overall more attacks from hackers, and on the defenders' side, there was increased vigilance and more regular monitoring of vulnerabilities. As a result, attacks were more often linked to vulnerabilities, leading to an increase in reporting with CVEs.

Our objective here is to quantify the contribution of this external excitation compared to the overall one. Together, the two plots indicate the importance of employing a comprehensive model to understand and predict the number of attacks effectively over a certain period of time. A model that considers both auto-excitation, external excitation factors (such as vulnerability exploitation) and remediation and preventive measures is crucial to gaining deeper insights into a cyber attack propagation mechanism.

\subsubsection{Description of vulnerability databases }

A vulnerability is a weakness in an IT system or software that might be exploited by a hacker to gain unauthorized access or inflict damage. It is similar to an unlocked door or an open window in a house that a potential thief could use to break in. Vulnerabilities arise from errors in programming, configuration mistakes, design flaws or any other risk factor that could expose the system to security risks. For example, imagine a website that allows users to submit comments on blog posts, if the website does not properly validate the comments before displaying them on the page, this could be exploited by an attacker who submits a malicious script as a comment. When other users visit the page and view the comments, the malicious script executes in their web browsers, allowing the attacker to steal their login credentials or spread malware. This type of vulnerability is known as a "Cross-Site Scripting" (XSS) attack. It has been used in various cyber attacks to compromise user accounts, steal sensitive information, and spread harmful content.

All known exploited vulnerabilities can be found in the KEV (Known Exploited Vulnerabilities) database. It is a comprehensive repository of documented vulnerabilities that have been confirmed to be exploited by cyber attackers. This for example allows security teams to prioritize their patching efforts. 

When security researchers or organizations discover a vulnerability, they request a CVE (Common Vulnerabilities and Exposures) ID from the MITRE corporation, which is responsible for managing the CVE system. Once a CVE ID is assigned, the vulnerability and its details, such as its impact and how to fix it, are documented and made publicly available in the NVD database. This consistency helps in sharing information and collaborating on fixing vulnerabilities. The NVD database also assigns Common Vulnerability Scoring System (CVSS) scores that range from 0 (no impact) to 10 (severe impact) to assess the severity and potential impact of each vulnerability.

Figure~\ref{density_NVD_KEV} below depicts the distribution of CVSS scores between 2018 and 2022 in the NVD (National Vulnerability Database) and KEV databases. 
It can be observed that vulnerabilities with a CVSS score below 5 are not known to be exploited, as they are not present in the KEV database. Additionally, vulnerabilities with a CVSS score between 7 and 8 are the most numerous in both the NVD and KEV databases. Moreover, the KEV database contains a significant number of critical vulnerabilities with a score above 9.

\begin{figure}[H]
    \centering
    \begin{subfigure}[b]{0.45\textwidth}
        \centering
        \includegraphics[scale=0.35]{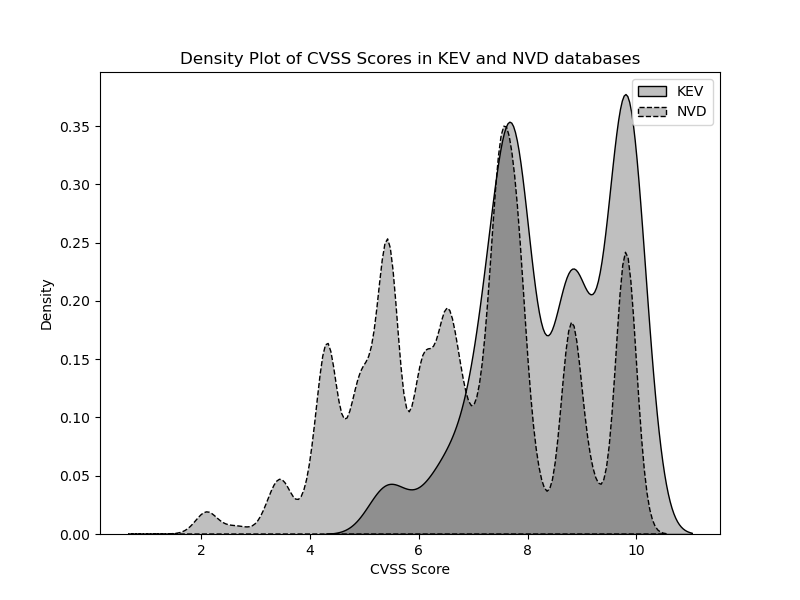}
        \caption{CVSS scores repartition in NVD and KEV databases from 01/01/2018 to 12/31/2022}
        \label{density_NVD_KEV}
    \end{subfigure}
    \hfill
    \begin{subfigure}[b]{0.45\textwidth}
        \centering
        \includegraphics[scale=0.45]{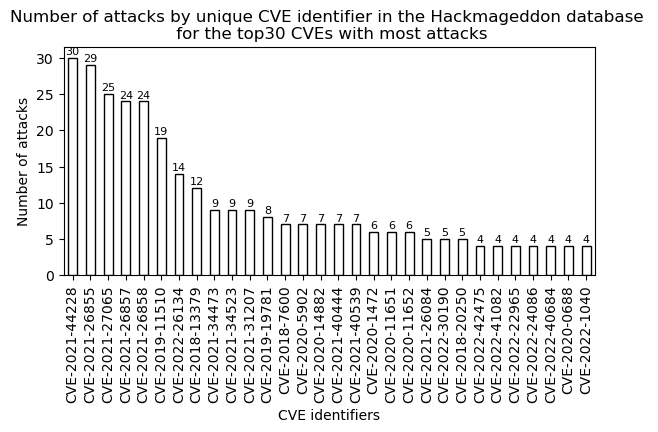}
        \caption{Number of attacks by CVE identifiers in the Hackmageddon database}
        \label{number_attacks_CVE}
    \end{subfigure}
    \caption{CVSS scores distribution in NVD and KEV databases and the CVE-identified attacks in the Hackmageddon database}
\end{figure}

Figure~\ref{number_attacks_CVE} illustrates the number of attacks per CVE identifier reported in the Hackmageddon database. For visualization purposes, we are focusing on the 30 most frequent vulnerabilities out of the 375 unique CVE identifiers in the Hackmageddon database. The most commonly occurring vulnerability is CVE 2021-44228, known as the log4j vulnerability, which affected a widely used Java authentication library of the same name. The other vulnerabilities that follow typically target Microsoft Exchange servers.

Upon analyzing the Hackmageddon database, we found that vulnerabilities associated with cyber attacks have also a score above 5, which aligns with the observation in the KEV database. Therefore, when conducting calibrations based on the extended vulnerability database (here, it is  the NVD database where not all vulnerabilities are necessarily associated with attacks), we focus on vulnerabilities with a score above 5 to ensure that groups of vulnerabilities are coherent for our analysis. 

\subsection{Inference results}
\subsubsection{Calibration period}
The analysis is carried out over the period 2018-2022. Data before 2018 are not directly exploitable because some files, for example, contain images which are hard to exploit. Additionally, given the rapidly evolving nature of cyber risk, having an extended historical record may not be essential. Figure~\ref{calibration_periods} illustrates the different timelines used in the calibration process. The calibration period is in blue (on the year 2021) knowing all attacks and vulnerabilities from the orange period (from 2018 to 2021). The validation is conducted using data from the year 2022 (shown in green). 

\begin{figure}[H]
    \centering
    \includegraphics[scale=0.45]{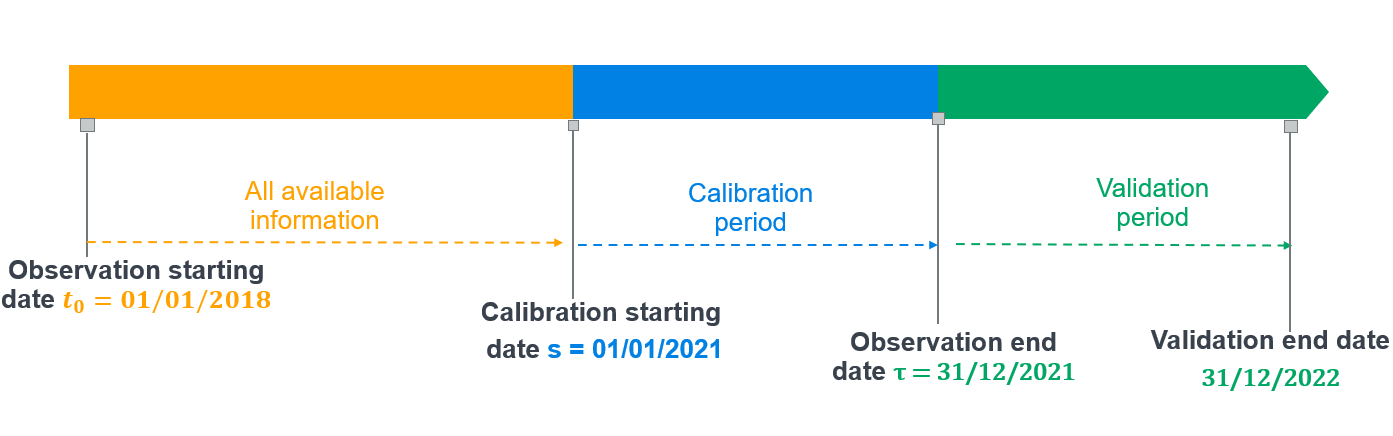}
    \caption{Illustration of the different periods in the calibration process in the Hackmageddon database}
    \label{calibration_periods}
\end{figure}

Table~\ref{nb_vuln} below shows the number of attacks and vulnerabilities in the used databases on the three timelines, using the same color code: 

\begin{table}[h]
\caption{Number of attacks and vulnerabilities in the Hackmageddon, KEV and NVD databases with a CVSS score above 5}\label{nb_vuln}
\begin{tabular}{@{}p{2cm}p{2cm}p{2cm}p{2cm}p{2cm}@{}}
\toprule
\textbf{Year} & \textbf{Nb. of attacks} & \textbf{Nb. of Hackmageddon vulnerabilities} & \textbf{Nb. of KEV vulnerabilities} & \textbf{Nb. of NVD vulnerabilities} \\
\midrule
\textcolor{orangeM}{\textbf{2018}} & \textcolor{orangeM}{\textbf{1310}}  & \textcolor{orangeM}{\textbf{31}} & \textcolor{orangeM}{\textbf{63}} & \textcolor{orangeM}{\textbf{8402}} \\
\textcolor{orangeM}{\textbf{2019}} & \textcolor{orangeM}{\textbf{1788}} & \textcolor{orangeM}{\textbf{62}}&\textcolor{orangeM}{\textbf{102}} & \textcolor{orangeM}{\textbf{11932}} \\
\textcolor{orangeM}{\textbf{2020}} & \textcolor{orangeM}{\textbf{2321}} & \textcolor{orangeM}{\textbf{63}} & \textcolor{orangeM}{\textbf{125}}&  \textcolor{orangeM}{\textbf{15979}} \\
\textcolor{blueM}{\textbf{2021}} & \textcolor{blueM}{\textbf{2528}} &\textcolor{blueM}{\textbf{128}} & \textcolor{blueM}{\textbf{175}} &  \textcolor{blueM}{\textbf{17829}}  \\
\textcolor{greenM}{\textbf{2022}} & \textcolor{greenM}{\textbf{2649}}  &\textcolor{greenM}{\textbf{91}} & \textcolor{greenM}{\textbf{113}}&  \textcolor{greenM}{\textbf{22288}} \\
\bottomrule
\end{tabular}
\end{table}

The size of the vulnerability database increases by refining the data sources. The impact of increasing the external database is illustrated in Section~\ref{results_calibration_section}.

\subsubsection{Calibration results} \label{results_calibration_section}

In this section, we present the obtained calibration results using the maximum likelihood method for two models: one is the standard linear Hawkes process without the external excitation and the other is a Hawkes process with an external excitation.  These results are illustrated in Table~\ref{results_calibration_table}. To simplify notations, the intensity of the standard Hawkes process is expressed as: $\lambda_t^{st} = \lambda_0 + \sum_{T_i < t} m e^{-\delta (t-T_i)}$. And recall that the intensity of the Hawkes process with external excitation is expressed as $\lambda_t = \lambda_0 + \sum_{T_i < t} m e^{-\delta (t-T_i)} + \sum_{\overline{T}_k < t} \overline{m} e^{-\delta (t-\overline{T}_k)}$. The results for the Hawkes process with the external excitation are presented for the three vulnerability databases, along with the 95\% confidence intervals for the estimated parameters.

We observe that the likelihood and MSEs values improve as the external excitation database is considered and expanded. The highest values for these indicators is achieved with a model without external excitation, while the smallest values are obtained with the NVD database, which represents the largest external excitation database, as illustrated in Table~\ref{nb_vuln}. These results emphasize the importance of considering a stochastic external excitation in the model.

Another interesting result is that $\lVert \phi \rVert$ (the norm of the self-excitation-internal kernel) decreases as the external database expands. It is almost halved between the model without external excitation and the model with external excitation when considering vulnerabilities from NVD, which is the most extensive database. By refining the external database with a more extensive collection of vulnerabilities, as illustrated in Table~\ref{nb_vuln}, what was initially attributed to self-excitation is, in fact, due to the arrival of external events. More generally, as we refine the contribution of external events, the endogenousness reflected by the $\lVert \phi \rVert$ decreases. 

Also, the indicator $\frac{\rho \hspace{1mm} \lVert \overline{\phi} \rVert + \lambda_0}{1 - \lVert \phi \rVert}$ remains stable regardless of the external database.

\clearpage

\begin{sidewaystable}[h]
\caption{Calibration results for a standard linear Hawkes process and the Hawkes process with external excitation with vulnerabilities from the Hackmageddon, KEV, and NVD databases with \(t_0 = 01/01/2018\), \(s = 01/01/2021\), and \(\tau = 31/12/2021\)}\label{results_calibration_table}
\begin{tabular*}{\textheight}{@{\extracolsep\fill}p{2cm}p{2cm}cccccc}
\toprule
\textbf{Model} & \textbf{Vuln. database} & \(\boldsymbol{\lambda_0}\) & \(\boldsymbol{\rho}\) & \(\boldsymbol{\overline{m}}\) & \(\boldsymbol{m}\) & \(\boldsymbol{\delta}\) & \(\boldsymbol{\lVert \phi \rVert}\) \\
\midrule
No external events & - & 2.7031 & - & - & 0.9182 & 1.5047 & 0.61 \\
& 95\% C.I & [2.4863,2.9199] & - & - & [0.8608, 0.9756] & [1.1723, 1.8371] & - \\
With external events & Hackmageddon & 2.7081 & 0.3636 & 0.5941 & 0.8891 & 1.5080 & 0.58 \\
& 95\% C.I & [2.4873,2.9289] & [0.3180, 0.4092] & [0.3484, 0.8398] & [0.6909, 1.0873] & [1.1649, 1.8511] & - \\
With external events & KEV & 2.6964 & 0.5057 & 0.9774 & 0.8529 & 1.5061 & 0.56 \\
& 95\% C.I & [2.4229, 2.9699] & [0.4527, 0.5587] & [0.4388, 1.2282] & [0.6734, 1.1048] & [1.1921, 1.8239] & - \\
With external events & NVD & 2.4195 & 48.849 & 0.077413 & 0.67139 & 1.8697 & 0.36 \\
& 95\% C.I & [2.1573,2.6817] & [48.2987,49.1993] & [0.01211,0.1427] & [0.4985,0.8442] & [1.3998,2.3396] & - \\
\cmidrule{1-8}
\textbf{Model} & \textbf{Vuln. database} & \(\boldsymbol{\lVert \overline{\phi} \rVert}\) & \(\boldsymbol{\rho \hspace{1mm} \lVert \overline{\phi} \rVert}\) & \(\boldsymbol{\frac{\rho \hspace{1mm} \lVert \overline{\phi} \rVert + \lambda_0}{1 - \lVert \phi \rVert}} \) & \(\boldsymbol{-ln(\mathcal{L})}\) & \(\frac{\textbf{MSE}^{\textbf{Ext}}}{(N_{\tau} - N_s)}\) & \(\frac{\textbf{MSE}^{\textbf{Int}}}{(N_{\tau}-N_s)}\) \\
\cmidrule{1-8}
No external events & - & - & - & 6.9349 & 6932.46 & 1.34 \% & 1.04 \% \\
With external events & Hackmageddon & 0.3960 & 0.1440 & 6.9002 & 6563.28 & 1.21 \% & 0.95 \% \\
With external events & KEV & 0.6516 & 0.3295 & 6.9828 & 6218.54 & 1.18 \% & 0.93 \% \\
With external events & NVD & 0.0416 & 2.0327 & 6.9595 & \textbf{5416.83} & \textbf{1.05} \% & \textbf{0.84} \% \\
\botrule
\end{tabular*}
\end{sidewaystable}

\clearpage

\subsection{Validation of the calibration}
\subsubsection{Validation tests}

We use a Kolmogorov-Smirnov test to evaluate the  goodness of fit of the used model. The test relies on the following {generic result which follows from Theorem 4.1 of Garcia and Kurtz \cite{garcia2008spatial}. }

\begin{proposition}
    Let $(T_k)_k$ be the jump times of the counting process $(N_t)_t$ with the intensity $\lambda_t$.
    Then $ (\tau_k = \int_{0}^{T_k} \lambda_t \mathrm{d}t, \quad k \ge 1 )$ are the jump times of a homogeneous Poisson process of intensity 1. 
\end{proposition}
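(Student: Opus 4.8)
The plan is to invoke the random time-change theorem for point processes, whose multivariate version is precisely Theorem 4.1 of Garcia and Kurtz \cite{garcia2008spatial}: a simple counting process, read along the clock defined by its own compensator, becomes a unit-rate Poisson process. The whole argument reduces to identifying the compensator, checking that the associated time change is licit, and transporting the statement back to the jump times.

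First I would identify the compensator. Since $\lambda_t$ is by construction the $\mathbb{F}$-intensity of $N$, the process $M_t := N_t - \Lambda_t$, with $\Lambda_t := \int_0^t \lambda_u \, \mathrm{d}u$, is an $\mathbb{F}$-local martingale, so $\Lambda$ is the continuous, predictable compensator of $N$. Next I would verify the regularity needed to define a genuine time change. Because $\lambda_t \ge \lambda_0 > 0$ in the first phase (and, in the two-phase case, the baseline term $\alpha_0 \lambda_0$ together with the assumption $(\alpha_0,\alpha_1) \ne (0,0)$ prevents the intensity from vanishing identically), the map $t \mapsto \Lambda_t$ is continuous and strictly increasing, and $\Lambda_t \to \infty$ almost surely as $t \to \infty$. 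It therefore admits a continuous, strictly increasing inverse, and I would set $\theta_u := \inf\{t \ge 0 : \Lambda_t > u\}$ and equip the problem with the time-changed filtration $\mathcal{G}_u := \mathcal{F}_{\theta_u}$.

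Then I would apply the time change to the martingale. Setting $\tilde{N}_u := N_{\theta_u}$ and applying optional stopping to $M$ at the $\mathbb{F}$-stopping times $\theta_u$, the process $\tilde{N}_u - \Lambda_{\theta_u} = \tilde{N}_u - u$ is a $\mathcal{G}$-local martingale; equivalently, the $\mathcal{G}$-compensator of $\tilde{N}$ is the deterministic function $u \mapsto u$. Watanabe's characterization then guarantees that a simple point process whose compensator equals $u$ is exactly a homogeneous Poisson process of rate $1$. Finally I would translate back to jump times: by strict monotonicity and continuity of $\Lambda$ one has $\theta_{\Lambda_{T_k}} = T_k$, so $\tilde{N}$ jumps at level $u$ precisely when $N$ jumps at $\theta_u$, and the $k$-th jump of $\tilde{N}$ occurs at $u = \Lambda_{T_k} = \int_0^{T_k} \lambda_t \, \mathrm{d}t = \tau_k$. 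Hence $(\tau_k)_{k \ge 1}$ are the jump times of a unit-rate Poisson process, which is the claim.

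The step requiring the most care is the verification of the hypotheses that make the time change legitimate, namely the almost-sure continuity, strict monotonicity and unboundedness of $\Lambda$, together with non-explosion of $N$ (finitely many jumps on compact intervals) so that $\tilde{N}$ is itself a well-defined simple counting process. Strict positivity of $\lambda_t$ and the lower bound supplied by the baseline intensity take care of monotonicity and unboundedness, while non-explosion follows from the exponential kernels being integrable and the observation horizon being finite; the degenerate configurations where the intensity can stall are excluded by the standing assumptions.
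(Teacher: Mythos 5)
Your proposal is correct and follows essentially the same route as the paper, which proves the proposition simply by citing Theorem 4.1 of Garcia and Kurtz \cite{garcia2008spatial}; your compensator-identification, time-change and Watanabe-characterization steps are precisely the standard argument underlying that citation, with the hypotheses (strict positivity via $\lambda_t \ge \lambda_0 > 0$, non-explosion, unboundedness of $\Lambda$) duly verified. The only minor caveat is that $\Lambda_t \to \infty$ almost surely is not automatic in the two-phase regime with $\alpha_0 = 0$ (where the post-$\ell$ intensity decays and may be integrable), but this is immaterial here since the paper applies the result to the calibrated one-phase model on a finite window, where $\lambda_t \ge \lambda_0 > 0$ suffices.
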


If the underlying process is a Hawkes process with intensity $\lambda_t$, then the times $\theta_k = \tau_k - \tau_{k-1}$, $k\ge 1$ are independent and have an exponential distribution function with parameter 1. The test compares the empirical distribution function obtained from the observed times and the exponential distribution function with parameter 1. 

The null hypothesis considered is the adequacy with a  confidence level of 5\%. The cases where the null hypothesis is not rejected are highlighted in bold in Table~\ref{ks_results}: 

\begin{table}[h]
\caption{Kolmogorov-Smirnov goodness-of-fit results}\label{ks_results}
\begin{tabular}{@{}cccc@{}}
\toprule
$\textbf{Vulnerabilities database}$ & $\textbf{Hackmageddon}$ & $\textbf{KEV}$ & $\textbf{NVD}$ \\
\midrule
p-value & 0.04753 & \textbf{0.05065} & \textbf{0.1724} \\
\botrule
\end{tabular}
\end{table}

The adequacy hypothesis is rejected when vulnerabilities are extracted from the Hackmageddon database, and is not rejected with vulnerabilities extracted from the KEV and NVD databases. The test fits better when vulnerabilities are extracted from the largest database, here it is the NVD database. {This observation aligns with the findings in Table~\ref{results_calibration_table}, where the best results are obtained for the NVD database.} This highlights the importance of expanding the vulnerability database.

\subsubsection{Distribution of the number of attacks predicted in one year}

To further validate the model, we forecast the distribution of the number of attacks that could occur over a year and compare it with the number of attacks that occurred in 2022. The simulation is done using 10,000 trajectories generated using the thinning algorithm. The observed value in 2022 is then compared with the 95th and 5th percentiles for each vulnerability database. The choice of a one-year horizon is significant due to regulatory requirements in insurance, as it aligns with frameworks like Solvency II's internal models. 

We recall in Table~\ref{sim_params} the calibrated parameters obtained for each vulnerability database:

\begin{table}[h]
\caption{Simulation parameters obtained from calibration on different vulnerabilities databases with \( t_0 = 01/01/2018 \), \( s = 01/01/2021 \) and \( \tau = 31/12/2021 \)}\label{sim_params}
\begin{tabular}{@{}cccccc@{}}
\toprule
\textbf{Vulnerabilities database} & $\boldsymbol{\lambda_0}$ & $\boldsymbol{\rho}$ & $\boldsymbol{\overline{m}}$ & $\boldsymbol{m}$ & $\boldsymbol{\delta}$ \\
\midrule
Hackmageddon & 2.7081 & 0.3636 & 0.5941 & 0.8891 & 1.5080 \\
KEV & 2.6964 & 0.5057 & 0.9774 & 0.8529 & 1.5061 \\
NVD & 2.4195 & 48.849 & 0.077413 & 0.67139 & 1.8697 \\
\botrule
\end{tabular}
\end{table}

The distributions in Figure~\ref{distribitions_nb_attacks_valid} seem to capture the dynamics of cyber attacks in the Hackmageddon database. The distribution of the number of attacks with vulnerabilities from the NVD database has the smallest variance. By incorporating richer external information, the variance of the distribution obtained decreases as the parameters are better estimated. Here, the variance obtained from NVD is smaller than the one obtained from KEV, which is smaller than the one obtained from restricting the analysis to vulnerabilities contained in the Hackmageddon database. This decrease in variance has significant implications in insurance reserve calculations, for example, when considering only the Hackmageddon database for attacks and vulnerabilities would lead to higher reserve calculations compared to considering the NVD database for vulnerabilities. However, in the context of insurance premium calculations, the impact is minor, as the mean number of attacks remains consistent regardless of the external information database.

\begin{figure}[H]
\centering
\includegraphics[scale=0.5]{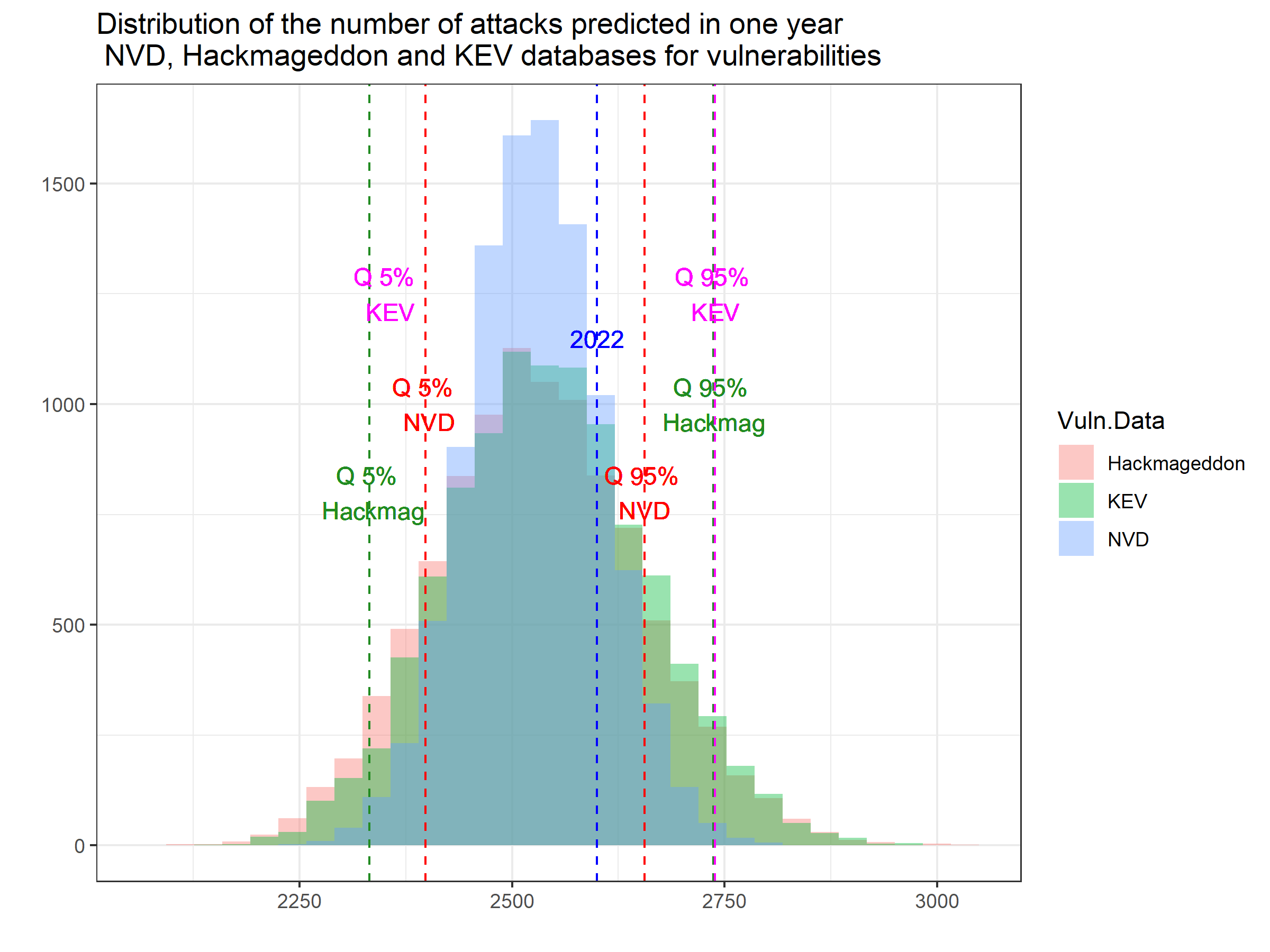}
\caption{Distribution of the number of attacks predicted in one year}
\label{distribitions_nb_attacks_valid}
\end{figure}

\subsection{Relative contribution of internal and external intensities in the global one} \label{intensity_decomposition}

In this section, the fractions of intensity attributed to external, internal, and baseline components are plotted for attacks extracted from the Hackmageddon database and for vulnerabilities respectively from Hackmageddon in Figure~\ref{ratio_intensities_Hackmag}, KEV in Figure~\ref{ratio_intensities_KEV} and NVD in Figure~\ref{intensity_frac_nvd}. We recall that: $ \lambda_t = \lambda_0  + \sum_{\overline{T_k} < t}\overline{m} e^{-\delta (t-\overline{T_k})} + \sum_{T_i < t}m e^{-\delta (t-T_i)} $ and we denote $\lambda_t^{\text{ext}} = \sum_{\overline{T_k} < t}\overline{m} e^{-\delta (t-\overline{T_k})} $ and $\lambda_t^{\text{int}} =  \sum_{T_i < t}m e^{-\delta (t-T_i)}$.

This visualization provides a clear understanding of the relative contributions of these factors to the overall intensity in the model. This breakdown of the intensity of the attacks process helps us determine what is driving the intensity of the Hawkes process and  where the observed attacks originate from. Meaning that: 
\begin{itemize}
    \item If the overall intensity is primarily attributed to $\lambda_t^{\text{int}}$, it indicates that the system in which attacks spread is endogenous, implying that one attack triggers another due to their contagious nature.
    \item If the baseline intensity $\lambda_0$ dominates, it suggests that the intensity is high because there is a high rate of attacks happening spontaneously, not necessarily triggered by previous attacks. 
    \item When $\lambda_t^{\text{ext}}$ is the dominant factor, it indicates a significant contribution from the exogenous events, here it is vulnerabilities. These events are increasing the intensity of attacks, potentially setting off a chain reaction of self-excitation.
\end{itemize}

This decomposition also allows for selecting the appropriate response strategy by activating the appropriate measures to mitigate the number of attacks, depending on whether the threat is endogenous or exogenous, as detailed in Section~\ref{section_forecasting}.

In Figure~\ref{ratio_intensities_Hackmag}, we focus on vulnerabilities present in the Hackmageddon database. Both baseline and internal components are prevalent, taking turns in dominance. However, the orange curve is more dominant for longer periods. Notably, we observe a spike in blue (representing the exogenous part) that triggers a series of attacks. This can be interpreted as follows: the initiation of the endogenous system results from the arrival of external events (from the deterministic baseline intensity $\lambda_0$ and from the stochastic $\lambda_t^{\text{ext}}$). Subsequently, we witness internal contagion. This interpretation aligns with the fact that the vulnerabilities considered here lead automatically to an attack.

\begin{figure}[h]
\centering
\includegraphics[scale=0.5]{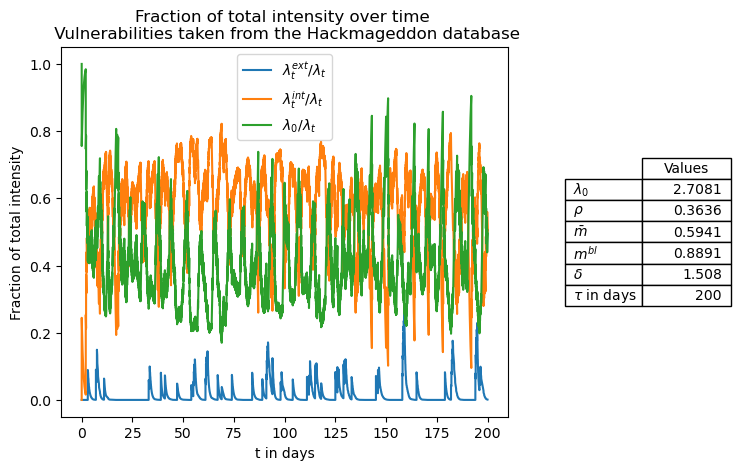}
\caption{Fraction of total intensity over time with vulnerabilities taken from the Hackmageddon database}
\label{ratio_intensities_Hackmag}
\end{figure}

\begin{figure}[h]
\centering
\includegraphics[scale=0.5]{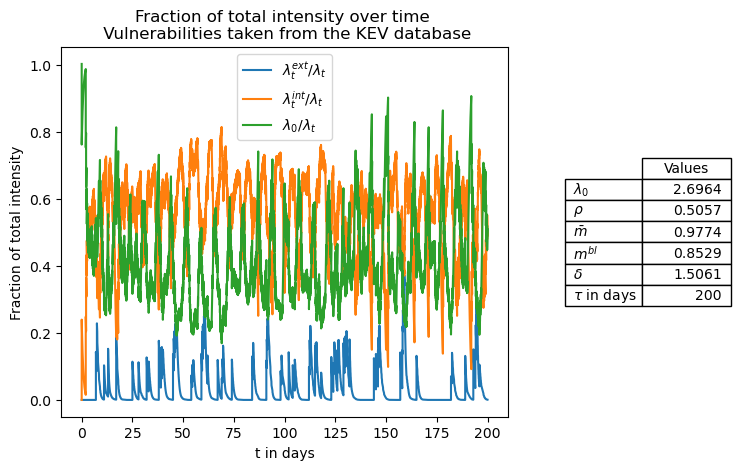}
\caption{Fraction of total intensity over time with vulnerabilities taken from the KEV database}
\label{ratio_intensities_KEV}
\end{figure}

In Figure~\ref{ratio_intensities_KEV}, the focus shifts to vulnerabilities from the KEV database. Recall that all vulnerabilities in this database are known to have triggered an attack. The same remarks previously made for Figure~\ref{ratio_intensities_Hackmag} apply here as well. We can see that the orange curve dominates more than the green and blue ones. The exogenous part is slightly more significant, but the system remains endogenous. While in Figure~\ref{ratio_intensities_Hackmag}, we could observe successive peaks in orange not necessarily associated with peaks in blue (for example, between day 13 and day 25), here we notice a stronger correlation. The same interpretation previously mentioned for Hackmageddon remains valid for KEV: the initiation of the endogenous system originates from the arrival of external events. 

Figure~\ref{intensity_frac_nvd} focuses on vulnerabilities extracted from the NVD. Here, the exogenous component is more pronounced, meaning that a significant portion of the excitation comes from the arrival of vulnerabilities. We observe fewer phases where the orange curve dominates. The following interpretation can be made: several vulnerabilities arrive and increase the intensity without necessarily triggering an attack, let alone contagious events. In other words, the intensity rises primarily due to external factors. This aligns with the fact that NVD vulnerabilities do not always cause an attack.  

\begin{figure}[H]
    \centering
    \includegraphics[scale=0.5]{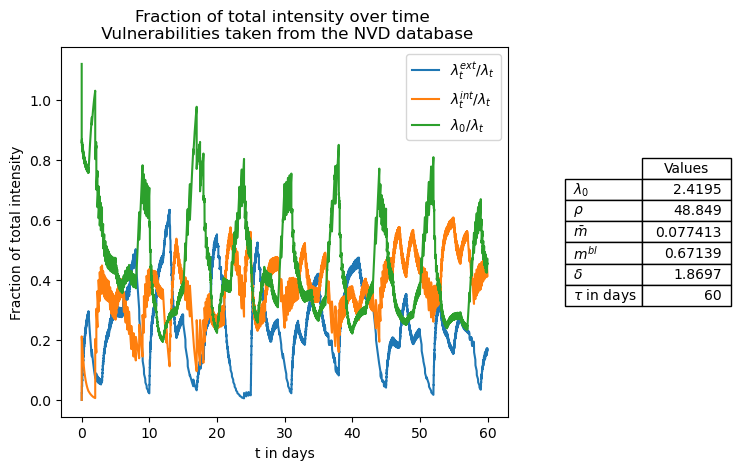}
 \caption{Fraction of total intensity over time with vulnerabilities taken from the NVD database}
 \label{intensity_frac_nvd}
\end{figure}

\subsection{Discussion on calibration choices } \label{justif_1P} 

In this section, we detail our choice of calibrating the first phase and not calibrating both phases simultaneously. 

Unlike the model used in \cite{dassiostwo}, which is designed to model the spread of Covid-19 and involves centralized reaction measures administered by states, the landscape of cyber risk is more heterogeneous. Notably, not all cyber attacks have the potential for widespread propagation. Generally, when attacks are isolated and not contagious, companies implement their own security measures, and the response time depends on how long it takes to find a fix. 
Estimating the effective response time $\ell$ is not meaningful when considering a portfolio exposed to isolated and non-contagious cyber attacks. 

However, when dealing with highly contagious attacks that have the potential to trigger a pandemic, such as when critical vulnerabilities are exploited to widespread contagion, the two-phase model could be calibrated for each critical vulnerability since the time to address them can vary and depends on the time required to find and deploy a patch for each. The Hackmageddon database for example connects certain attacks to specific vulnerabilities. Figure~\ref{number_attacks_CVE} shows that CVE 2021-44228 known as log4j vulnerability is the most exploited vulnerability in this database and results in 30 attacks. The distribution of these thirty attacks is as follows: 

\newpage

\begin{table}[h]
\caption{Number of attacks caused by the log4j vulnerability for each target}\label{attack_counts_log4j}
\begin{tabular}{@{}lc@{}}
\toprule
\textbf{Target} & \textbf{Number of attacks} \\
\midrule
Multiple Organizations & 14 \\
Vulnerable VMware Horizon deployments & 4 \\
Vulnerable Apache Log4j servers & 1 \\
Belgium's ministry of defense & 1 \\
Corporate networks in the Middle East & 1 \\
ONUS & 1 \\
Organizations in the U.S., Australia, Canada & 1 \\
Self-hosted Minecraft servers & 1 \\
Ubiquiti network appliances & 1 \\
Undisclosed Federal Civilian Executive Branch & 1 \\
Unnamed engineering company with energy and mining operations & 1 \\
VMware vCenter Server instances & 1 \\
Vulnerable ZyXEL devices & 1 \\
Windows and Linux devices & 1 \\
\botrule
\end{tabular}
\end{table}

In order to calibrate a second phase using the log4j data for example, additional information is required. Here, 14 victims in the category 'Multiple Organizations' are impacted by this vulnerability from 12/12/2021 to 06/23/2022. However, we do not have details on whether these same organizations were reinfected after deploying a patch or not.

To summarize, we apply the parameters identified in the first phase to derive the right response measures in the second phase in the context of attacks accumulation.  We assume that an insurer with a known response capacity is insuring a portfolio similar to the Hackmageddon database. Our objective is to determine the appropriate response parameters to ensure that the insurer's daily response capacity is not exceeded.

\section{Forecasting cyber pandemic scenarios} \label{section_forecasting}

The aim of this section is to explore how an insurer, confronted with a limited daily capacity to assist policyholders, can {take actions and incentivize them in order to overcome a saturation of its response capacity  during a cyber pandemic scenario.} We make the simplifying assumption that the portfolio under study corresponds to that of the Hackmageddon database and that each new attack requires assistance from the insurer. This assumption can, of course, be adjusted based on real portfolios and subscribed coverage.

The two-phase model, detailed in Section~\ref{section_two_phase_model_description}, is used here: 

\begin{equation}
\lambda_t = \begin{cases}
\lambda_0  + \sum_{\overline{T_k} < t}\overline{m} e^{-\delta (t-\overline{T_k})} + \sum_{T_i < t}m^{bl} e^{-\delta (t-T_i)} & \text{if } t < \ell \\ 
\\ 
\alpha_0 \lambda_{0} + \alpha_1 (\lambda_{\ell^{-}} - \lambda_0) & \text{if } t = \ell \\ 
\\
\alpha_0 \lambda_0  + \alpha_1 (\lambda_{\ell^{-}} - \lambda_0) e^{-\delta(t-\ell)}   +  \sum_{\ell<T_i < t}m^{al} e^{-\delta (t-T_i)} & \text{if } t > \ell 
\end{cases}
\end{equation}

The objective is to implement a response quantified by the following reaction parameters:
$$
(\alpha_0, \alpha_1, m^{al})
$$
This is achieved by applying the  parameters calibrated in the first phase, as detailed in Table~\ref{results_calibration_table}, using vulnerabilities from the NVD database. The second phase of process is then initiated at the deterministic time $\ell$ chosen as the first time the insurer's daily assistance limit is exceeded. This value is numerically set according to this.

Adjusting the $\alpha_0$ parameter involves reducing the baseline intensity, thereby decreasing the spontaneous rate at which cyber attacks occur. This could for example reflect increased vigilance among employees, such as proactive password changes. 
Modifying the $\alpha_1$ parameter impacts the branching ratio of events before $\ell$. The new branching ratio of these events is $\alpha_1 \lVert \phi \rVert = \alpha_1 \frac{m^{bl}}{\delta}$.

A choice of $\alpha_1<1$ could be interpreted as adjusting the patching speed of prior cyber attacks occurring before entering the reaction phase. A smaller $\alpha_1$ value indicates for example quicker patching, making previous incidents less contagious and having a diminished contribution to the intensity of the attack process. Finally, changing the $m^{al}$ parameter sets the level of contagion of attacks arriving after the reaction phase. The smaller this parameter, the less contagious future events will be. Such a decrease can be attributed to an increased awareness due to previous attacks and the implementation of strategies to prevent future incidents.
This change of $m^{al}$ also affects the branching ratio of events after $\ell$ to $\frac{m^{al}}{m^{bl}} \lVert \phi \rVert $.

The analysis starts in Section~\ref{section_reaction_measures} with a basic simulation in which distinct response parameters are individually adjusted to observe their impact on the number of attacks. {We compute the impact both dynamically on a specific trajectory and also statistically by representing the empirical distribution of the number of attacks and compare it to the cumulative reaction capacity over the total duration of the pandemic.} Next in Section~\ref{optim_parameters_response}, {and since the cumulative approach is less realistic,} we search for a set of response parameters that prevent the insurer from exceeding {its}  daily assistance capacity in average rather than its cumulative reaction capacity.  

\subsection{The insurer's reaction measures} \label{section_reaction_measures}

We assume that the assistance capacity $C$ for the insurer is of 5 policyholders per day. This value is provided for illustrative purposes only and can be adjusted based on the insurer's actual assistance capacity and the characteristics of the portfolio. This assumption allows us to numerically set the parameter $\ell$ of the activation of the reaction measures as the first time where the count of new attacks exceeds the insurer's cumulative assistance capacity. Using the calibrated parameters from the Hackmageddon database and the NVD database in Table~\ref{results_calibration_table}, $\ell = 3$ in this section. Over a span of 10 days - the duration of the pandemic in this illustrative example - this amounts to a total capacity of 50 cases. Three response strategies are considered in this example which are linked to the response parameters: $\alpha_0$, $\alpha_1$ or $m^{al}$. In Section~\ref{optim_parameters_response}, the combination of the three strategies is discussed. 

We plot the effect of these reaction parameters on the number of attacks: Figure~\ref{alpha_0_params_reaction} for $\alpha_0$ parameter, Figure~\ref{alpha_1_params_reaction} for  $\alpha_1$ parameter and Figure~\ref{m_al_params_reaction} for $m^{al}$ parameter. In each Figure's (a), the bar chart depicts the number of attacks over 10 days for different reaction measures on one scenario, distinguished by color: green for the one-phase (1P trajectory), and blue, red and yellow for the two-phase (2P trajectory) corresponding to different values of the tested reaction parameter. The reaction time $\ell$ is represented by a violet dotted line, while the 10-day cumulative maximal reaction capacity is indicated by a red dotted line. The histograms in each Figure's (b) represent the distribution of the predicted number of attacks from 10 000 simulations, highlighting the insurer's maximum cumulative assistance capacity with a blue dotted line.

In Figure~\ref{alpha_0_params_reaction}, three values of $\alpha_0$ are tested: (0,0.5,1) {and $\alpha_1$ is set to $\alpha_1 = 1$}. In the case where $\alpha_0 = 0$, as shown in blue in Figure~\ref{alpha_0_params_reaction}, the cumulated assistance capacity is not attained. However, in the case where $\alpha_0 = 0.5$, this maximum capacity might be exceeded in some {cases on the right of the distribution}. Moreover, when $\alpha_0 = 1$ or where no intervention takes place, this maximum capacity is more often exceeded.

\begin{figure}[H]
    \begin{subfigure}{0.45\textwidth}
     \centering
\includegraphics[scale=0.3]{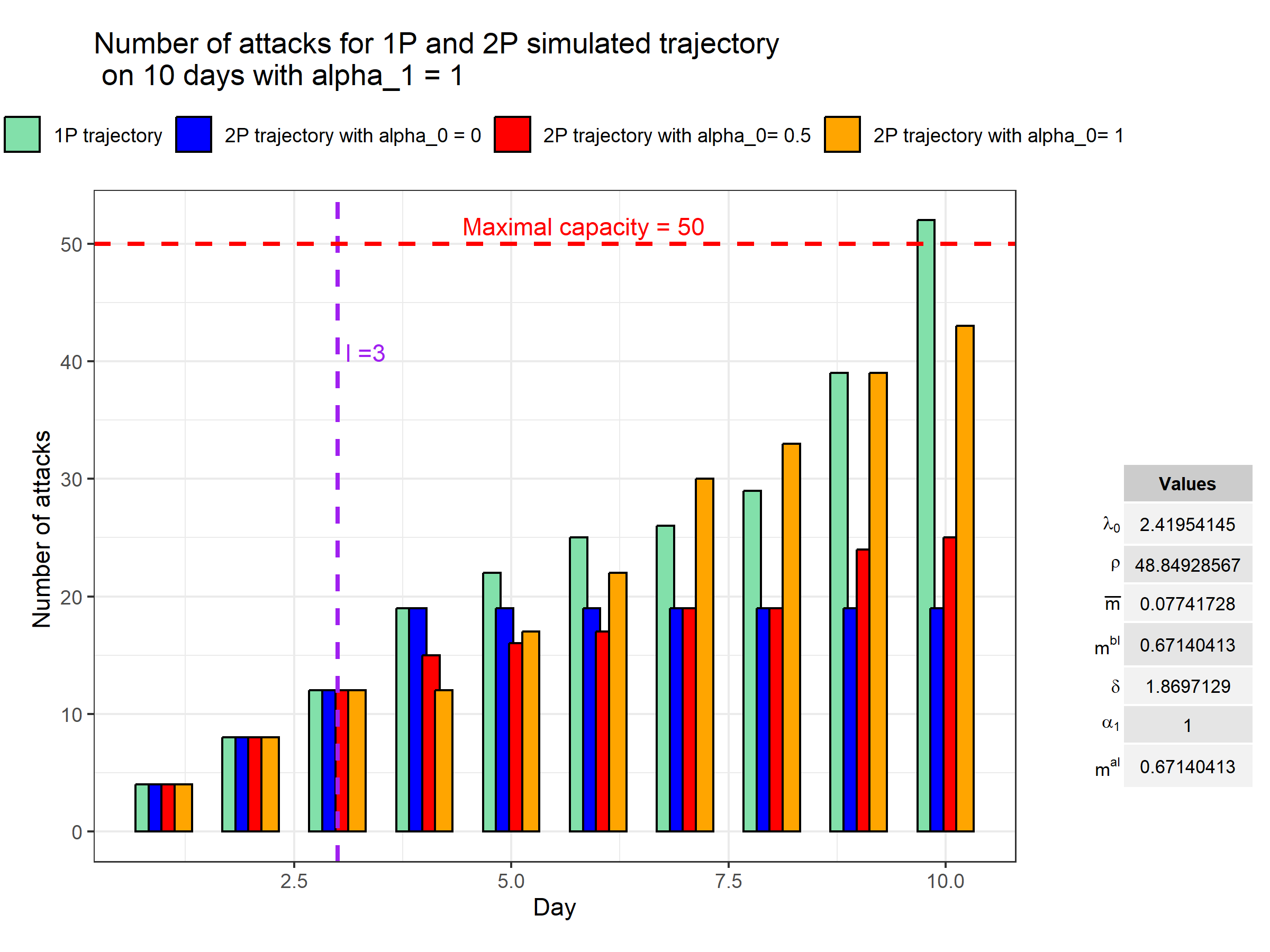}
\caption{Number of attacks predicted in 10 days for one trajectory}
    \end{subfigure}
    \hfill
    \begin{subfigure}{0.45\textwidth}
    \centering
\includegraphics[scale=0.3]{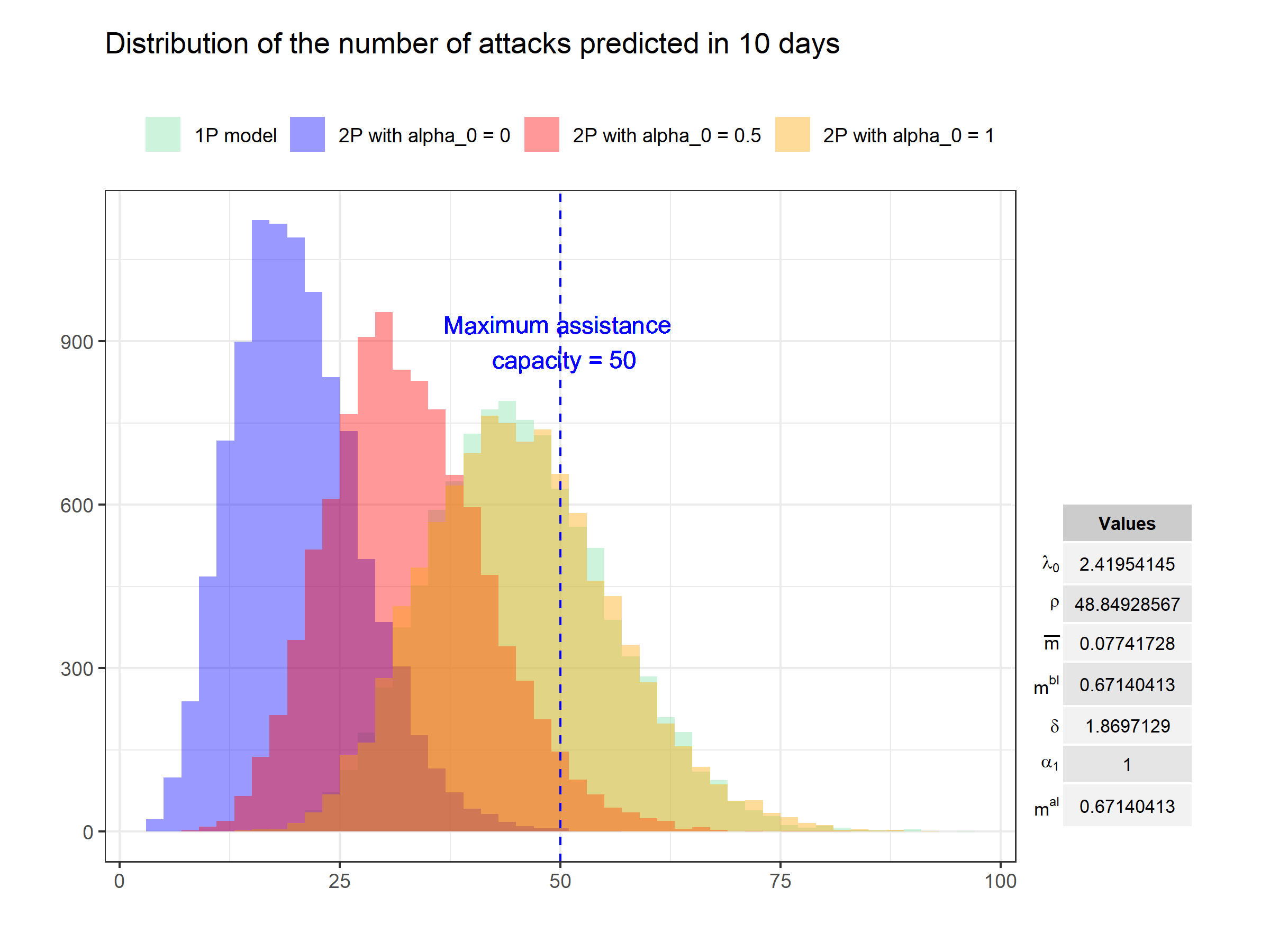}
\caption{Distribution of the number of attacks predicted in 10 days, simulation done with 10 000 trajectories}
    \end{subfigure}
    \caption{Effect of reaction measures on $\alpha_0$ parameter}
    \label{alpha_0_params_reaction}
\end{figure}

\vspace*{-0.5cm}
Figures \ref{alpha_1_params_reaction} and \ref{m_al_params_reaction} illustrate the effect of $\alpha_1$ and $m^{al}$ parameters while setting $\alpha_0 = 0.5$. {In Figure~\ref{alpha_1_params_reaction}, three values of $\alpha_1$ are tested:(0,0.5,1). }Adjusting the $\alpha_1$ parameter has little effect on the distribution of the {cumulative} number of attacks. However, by considering values such as $\alpha_0 = \alpha_1 = 0.5$, the insurer's maximum response capacity is still maintained to in most scenarios.

\begin{figure}[H]
    \begin{subfigure}{0.45\textwidth}
     \centering
\includegraphics[scale=0.3]{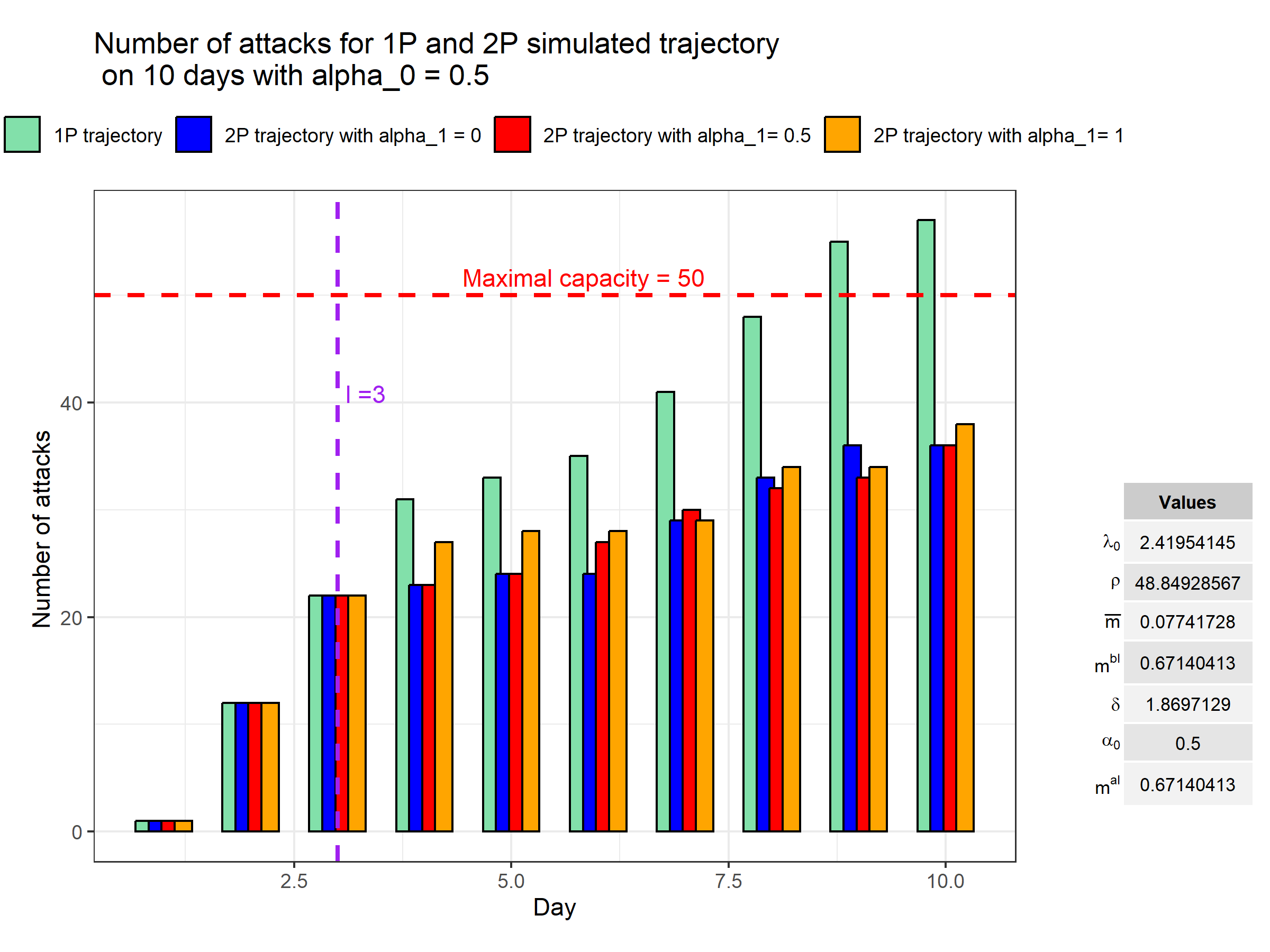}
\caption{Number of attacks predicted in 10 days for one trajectory}
    \end{subfigure}
    \hfill
    \begin{subfigure}{0.45\textwidth}
    \centering
\includegraphics[scale=0.3]{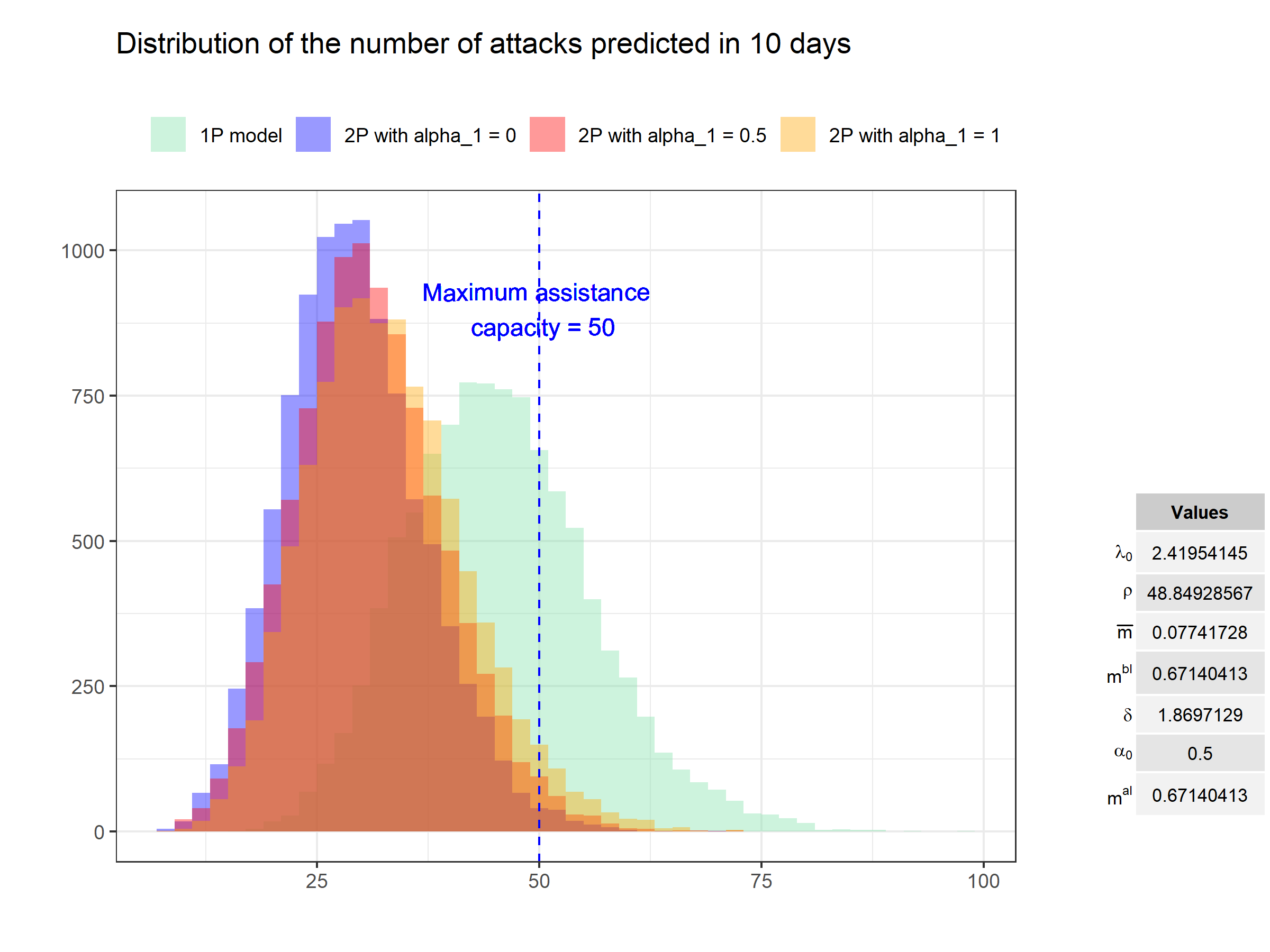}
\caption{Distribution of the number of attacks predicted in 10 days, simulation done with 10 000 trajectories}
    \end{subfigure}
    \caption{Effect of reaction measures on $\alpha_1$ parameter}
    \label{alpha_1_params_reaction}
\end{figure}

In the following, we illustrate the limited impact of adjusting the $m^{al}$ parameter on the distribution of the number of attacks in this specific example, since $m^{bl}$ is already quite small. Three configurations are tested, in blue $m^{al} = m^{bl}$, in red $m^{al} = \frac{m^{bl}}{2}$ and in yellow $m^{al} = \frac{m^{bl}}{4}$ in addition to the one-phase case in green.

\begin{figure}[H]
    \begin{subfigure}{0.45\textwidth}
     \centering
\includegraphics[scale=0.3]{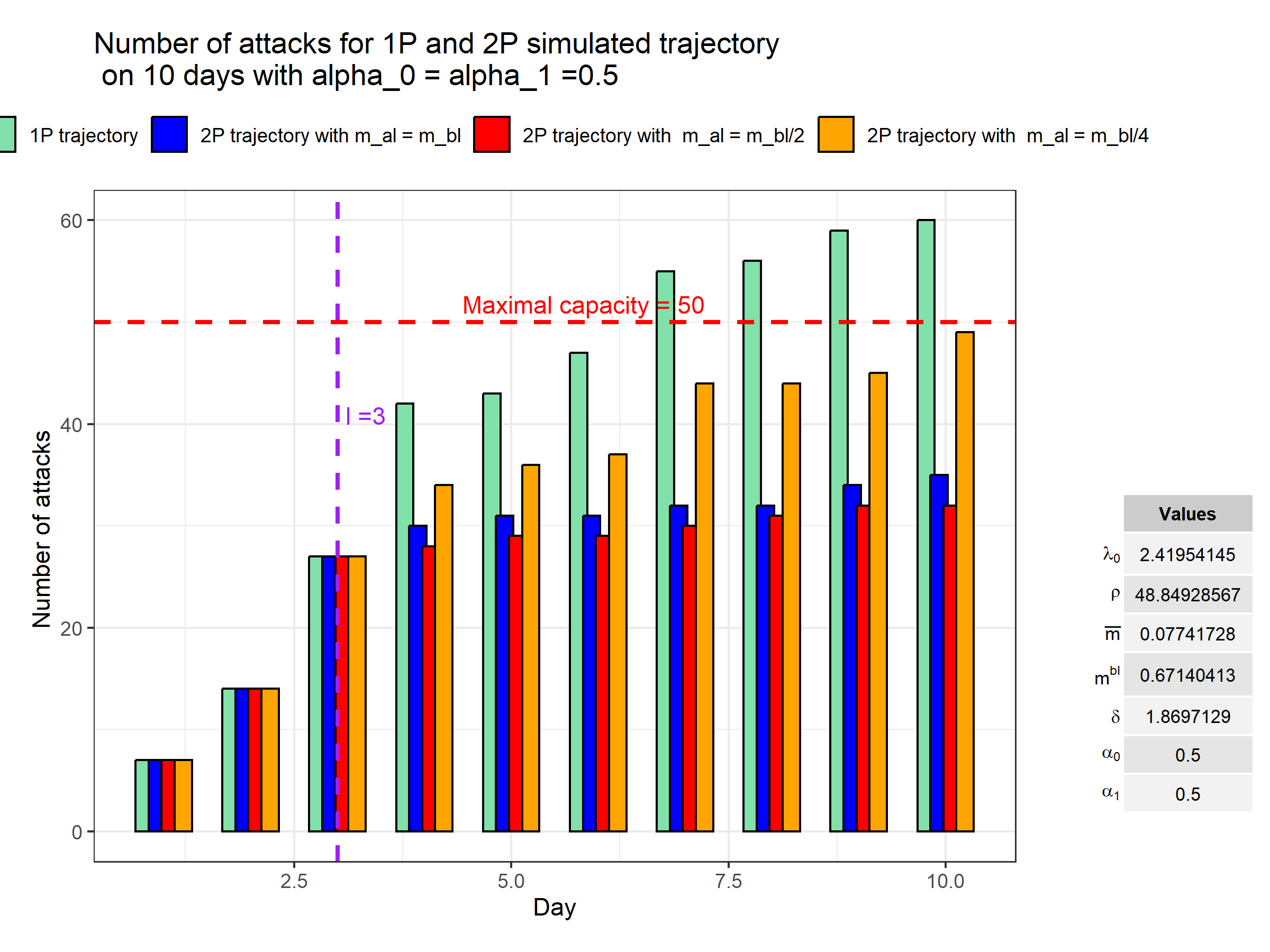}
\caption{Number of attacks predicted in 10 days for one trajectory}
    \end{subfigure}
    \hfill
    \begin{subfigure}{0.45\textwidth}
    \centering
\includegraphics[scale=0.3]{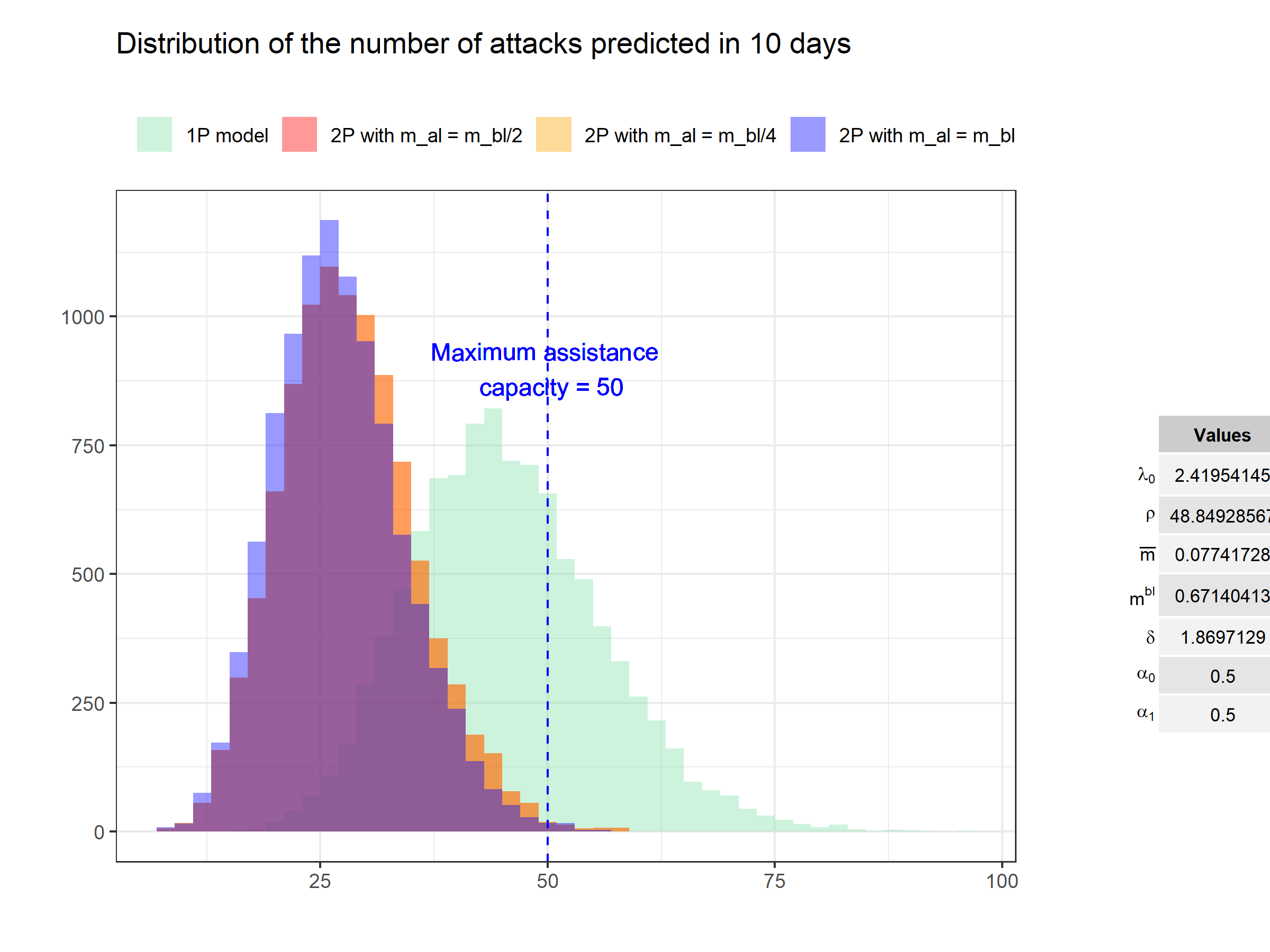}
\caption{Distribution of the number of attacks predicted in 10 days, simulation done with 10 000 trajectories}
    \end{subfigure}
    \caption{Effect of reaction measures on $m^{al}$ parameter}
    \label{m_al_params_reaction}
\end{figure}

By adjusting the parameters one by one, a parameter set to prevent the insurer from being overwhelmed could be $(\alpha_0 = 0.5, \alpha_1 = 0.5, m^{al} = m^{bl})$, the 99.5th percentile in this configuration is 49. It is important to note that adjusting these parameters do not cost the same. For example, setting $\alpha_0<1$
enhances preventive measures by training employees for better digital hygiene and increased awareness. This may come at a lesser cost than setting $\alpha_1<1$ and $m^{al}<m^{bl}$. In the latter configuration,  patching strategies would need to be implemented, and there might be a necessity to modify the network structure and IT infrastructure in order to disconnect certain computer links. Such alterations could lead to business interruptions in order to reduce the contagiousness of the event which may be more expensive. 

In what follows, our aim is to {choose a } set of parameters $(\alpha_0, \alpha_1, m^{al})$ while ensuring that the insurer's maximum capacity is not exceeded. We prioritize {selecting} the values of $\alpha_0$ and $\alpha_1$ while aiming to keep the event's contagiousness unchanged - meaning maintaining a small difference between $m^{al}$ and $m^{bl}$. 

\subsection{Response parameters selection} \label{optim_parameters_response} 

The aim of this section is to find an optimal set of reaction parameters $(\alpha_0, \alpha_1, m^{al})$ that ensures that the overall daily assistance capacity $C$ of 5 policyholders per day is not exceeded on average {during the response phase}. To achieve this, we use the closed-form formulas developed in Section~\ref{closed_formulas_expectation}. 

The insurer triggers the reaction phase at time $\ell = 3$ days. {As $(\mathbb{E}[N_{\ell}] -  C.\ell)^{+}$  of policyholders could not be assisted in the first phase, the daily assistance capacity in the second phase } is diminished with $\frac{\mathbb{E}[N_{\ell}] -  C.\ell}{\tau - \ell}$. We also recall that $\tau = 10$, which is the total duration of the pandemic. The insurer would want to {choose} $(\alpha_0, \alpha_1, m^{al})$  where  $0 < \alpha_0 \leq 1$, $0 < \alpha_1 \leq 1$ and $m^{al} \le m^{bl}$ such that 
$\left [
 (C - \frac{\mathbb{E}[N_{\ell}] - C.\ell }{\tau - \ell }) - \mathbb{E}[N_{t+1} - N_{t} ]\right ]$ remains  positive for all $t \ge \ell$.

 Since activating the different reaction parameters incurs different strategies as mentioned in the introduction of Section~\ref{section_forecasting}, the initial approach is to keep $m^{al} = m^{bl}$ and to search for the values of ($\alpha_0$,$\alpha_1$) in a grid within [0,1]. If no solution is found, then the selection procedure incorporates the parameter $m^{al}$ and searches for its value within ]0,$m^{bl}$[ using the same grid technique.

Figure~\ref{alpha_0_1_selection_area} illustrates the parameters sets that enable staying within the daily response capacity. The grey area represents the parameter region where the insurer's reaction capacity is exceeded. The insurer would need to choose a response set based on its constraints. For instance, one option is to select $\alpha_0$ and $\alpha_1$ from the efficient frontier represented in red in Figure~\ref{alpha_0_1_selection_area}. This choice could imply increasing employee vigilance to reduce spontaneous attacks while keeping $\alpha_1$ at 1 to delay immediate patching past attacks.

In this example, {$(\alpha_0,\alpha_1)$ are found without having to activate $m^{al}$ selection.  }

\begin{figure}[h]
    \centering
    \includegraphics[scale=0.45]{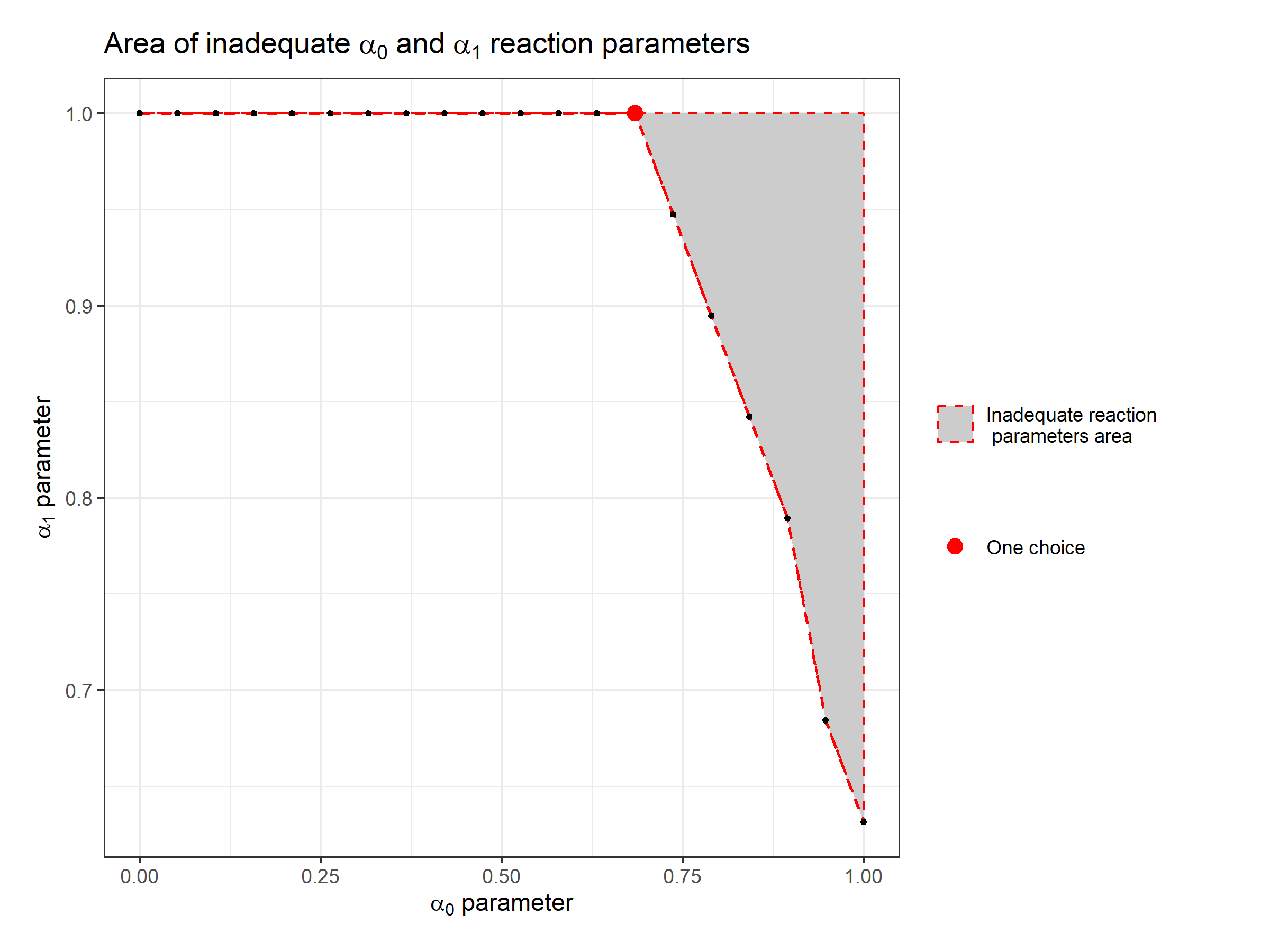}
    \caption{Area of inedequate $\alpha_0$ and $\alpha_1$ reaction parameters}
    \label{alpha_0_1_selection_area}
\end{figure}

 Figure~\ref{expected_nb_attacks_param_selction} displays the expected daily number of new attacks: the 1P case is represented in green, where no reaction phase is triggered, while the 2P case is illustrated in blue, using the identified $(\alpha_0,\alpha_1)$ denoted by the red point in Figure~\ref{alpha_0_1_selection_area}. This corresponds to the values $(\alpha_0 = 0.66, \alpha_1 = 1)$. {In dotted lines in red, the diminished daily assistance capacity $\frac{\mathbb{E}[N_{\ell}] - 
 C.\ell}{\tau - \ell} = 4.696$}.

\begin{figure}[h]
\centering
\includegraphics[scale=0.45]{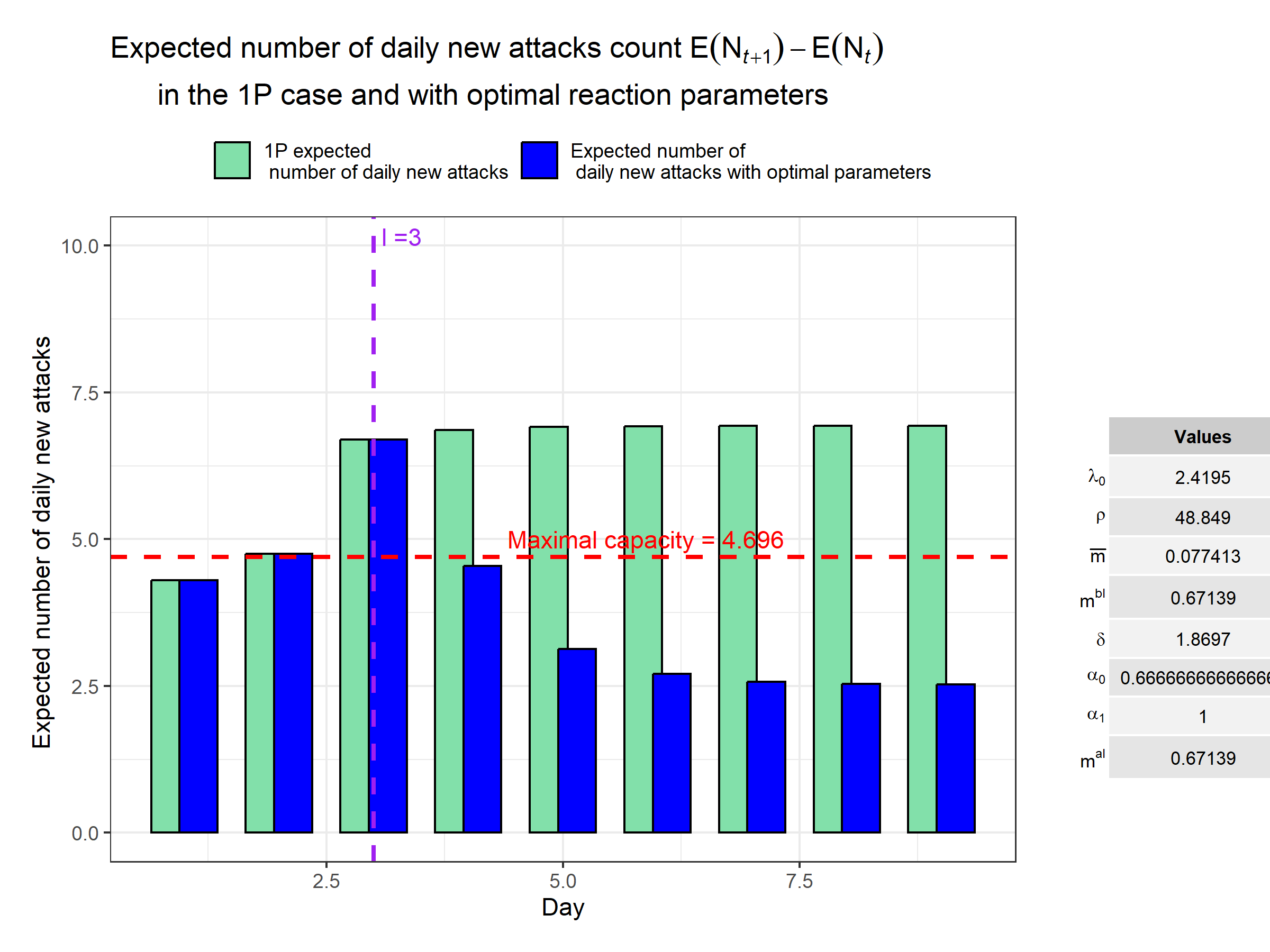}
\caption{Expected number of daily new attacks with optimal reaction parameters and in the 1P case}
\label{expected_nb_attacks_param_selction}
\end{figure}

The identified set of parameters ensures that the insurer's highest daily assistance capacity remains within limits in average throughout the reaction phase. 

The analysis carried out in this section is based on parameters obtained through the calibration using vulnerabilities from the NVD database. This particular scenario {is one where policyholders are exposed to multiple vulnerabilities and not all of them are exploited for launching attacks. Naturally, more extreme scenarios could be explored,} such as when the organization becomes a target, for example. In this case, vulnerabilities tend to be more exploited and the contagiousness of the attack is intensified. These extreme scenarios would allow us to assess how the reaction parameters perform under the most challenging circumstances.

\section{Conclusion}

This paper proposes a self-exciting model with external events to predict the arrivals of cyber attacks. This is achieved through Hawkes processes with external excitation. The latter capture the contagion of cyber events and the impact of cyber vulnerabilities disclosure on the dynamics of the cyber attacks process.
For this class of models, we have developed closed-form fomulas for the expectation of the 2P Hawkes process with external excitation based on the exponential kernels. Our proofs draw from population theory, providing a general framework that could be expanded to address other kernels. We first illustrate, through a simulated example, the crucial importance of incorporating external events into our model, as neglecting to do so could lead to a misidentification of the system's regime and results in an overestimation of its endogeneity. The analysis on real data is then conducted on the Hackmageddon database, KEV and NVD databases. We show that this degree of endogeneity can be halved by considering the appropriate external excitation found in the NVD database.

The proposed model has two phases. During the first phase, we have computer vulnerabilities that increase the intensity of the cyber attacks process, which can potentially lead to a clustering of cyber attacks. The second phase, on the other hand, is only activated if reactive measures need to be taken. Unlike in the case of a biological epidemy, the second phase here is used for a customized reaction on the level of an insurance portofolio. Using the Hackmageddon database, we considered a fictional insurance company with a limited known reaction capacity and investigated the best mix of strategies to manage claims without being overwhelmed. 

Moving forward, an interesting path to explore would be dynamic risk monitoring. The aim would be to find the best set of reaction parameters not just on average, but to fit on a trajectory that deviates from the expected one. Such dynamic risk management strategies would allow monitoring the peak in assistance requests during a cyber-pandemic scenario. Similarly, the time  $\ell$  of  reactive measures activation could be a random time: for example, $\ell$ could be chosen as the time at which the intensity hits a fixed  threshold or when the number of attacks surpasses a set limit that can be chosen as the insurer's maximum capacity for simultaneous daily assistance. In addition, the reaction parameters $\alpha_0$, $\alpha_1$, $m^{al}$ could be chosen dynamically using a stochastic optimization problem.

\begin{appendices}

\section{Computation of the expectation of the two-phase Hawkes process with external excitation} \label{annex_a}

The aim of this section is to detail the expectation of the two-phase Hawkes process with external excitation. The dynamics of $\mathbb{E}\left[\lambda_{t} \bigm| \mathcal{F}_s \right]$ is given in Proposition \ref{prop1}, then the solutions are provided in Proposition \ref{prop2}  and by the martingale property, Proposition \ref{prop3} is then deduced. 

\begin{proposition}
\label{prop1}
Let $0\le s<t$ and 
$ m^{bl} := \mathrm{E}[Y^{bl}_1]$, $ m^{al} := \mathrm{E}[Y^{al}_1]$ and $ \overline{m} := \mathrm{E}[\overline{Y}_1]$. 
$\mathbb{E}\left[\lambda_{t} \bigm| \mathcal{F}_s \right]$ satisfies the following dynamics: 
\begin{equation}
\mathbb{E}\left[\lambda_{t} \bigm| \mathcal{F}_s \right]=
\begin{cases}
\begin{aligned}
\lambda_s & + \delta \lambda_0 (t-s) - \left(\delta-m^{bl} \right) \int_{s}^{t} \mathbb{E}\left[\lambda_{u} \bigm| \mathcal{F}_s \right] \mathrm{~d}u \\ & + \rho \overline{m} (t-s) 
\end{aligned}
& \text{if } 0<s<t<\ell \\
\lambda_s + \alpha_0 \delta \lambda_0 (t-s) - \left(\delta-m^{al} \right) \int_{s}^{t} \mathbb{E}\left[\lambda_{u} \bigm| \mathcal{F}_s \right] \mathrm{~d}u & \text{if } 0<\ell \le s<t 
\end{cases}
\label{ODES_lambda_t}
\end{equation}
\end{proposition}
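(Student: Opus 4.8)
The plan is to obtain the integro-differential dynamics by writing $\lambda_t$ explicitly, computing its stochastic differential pathwise, and then taking the $\mathcal{F}_s$-conditional expectation once the compensators of the driving jump terms have been identified. I would prove the two cases separately, but by the same mechanism.

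First I would treat the first phase $0<s<t<\ell$. Decompose $\lambda_t = \lambda_0 + X_t + Z_t$ with the external contribution $X_t = \sum_{\overline{T}_k<t}\overline{Y}_k e^{-\delta(t-\overline{T}_k)}$ and the self-exciting contribution $Z_t = \sum_{T_i<t}Y_i^{bl}e^{-\delta(t-T_i)}$. Each sum decays at rate $\delta$ between arrivals and jumps by its mark at an arrival time, so that, using $X_t+Z_t=\lambda_t-\lambda_0$,
\[
d\lambda_t = -\delta(\lambda_t-\lambda_0)\,dt + \int \overline{y}\,\overline{Q}(dt,d\overline{y}) + \int y\,Q(dt,dy),
\]
where $\overline{Q}$ and $Q$ denote the marked point processes of vulnerabilities and of attacks.

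The key step is to compensate these two jump integrals. Since vulnerabilities form a Poisson process of rate $\rho$ carrying marks of mean $\overline{m}$ independent of the arrivals, the predictable compensator of $\int\overline{y}\,\overline{Q}(dt,d\overline{y})$ is $\rho\overline{m}\,dt$; since attacks arrive with intensity $\lambda_t$ and carry iid marks of mean $m^{bl}$, the compensator of $\int y\,Q(dt,dy)$ is $m^{bl}\lambda_t\,dt$. Taking $\mathbb{E}[\,\cdot\mid\mathcal{F}_s]$ and interchanging expectation with time-integration, the function $f(t):=\mathbb{E}[\lambda_t\mid\mathcal{F}_s]$ solves
\[
f'(t) = -(\delta-m^{bl})\,f(t) + \delta\lambda_0 + \rho\overline{m}.
\]
Integrating from $s$ to $t$ and using $f(s)=\mathbb{E}[\lambda_s\mid\mathcal{F}_s]=\lambda_s$ (the intensity $\lambda_s$ is $\mathcal{F}_s$-measurable) yields exactly the first case of \eqref{ODES_lambda_t}.

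For the second phase $0<\ell\le s<t$ the structure is identical. The external kernel is switched off, so only the self-exciting jumps with marks $Y_i^{al}$ survive, while the deterministic part now decays toward the modulated baseline $\alpha_0\lambda_0$; the differential reads $d\lambda_t = -\delta(\lambda_t-\alpha_0\lambda_0)\,dt + \int y\,Q^{al}(dt,dy)$, the attack jumps compensate to $m^{al}\lambda_t\,dt$, and the same integration gives the second case. The hard part is the rigorous justification of the two compensator identities and of the Fubini interchange between conditional expectation and the $dt$-integral; this rests on the independence of the marks from the point processes together with integrability of $\lambda$ on $[s,t]$ (guaranteed in the subcritical regime $\lVert\phi\rVert<1$), and is precisely what the population-theoretic framework of \cite{boumezoued2016population} supplies.
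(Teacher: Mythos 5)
Your proof is correct and follows essentially the same route as the paper: the paper derives the decay term through the measure-valued population dynamics of \cite{boumezoued2016population}, obtaining $\mathrm{d}\langle Z_t,\phi\rangle = \int x\,N(\mathrm{d}t,\mathrm{d}x) - \delta\langle Z_t,\phi\rangle\,\mathrm{d}t$, and then compensates the marked point measures by $\lambda_t G_t(\mathrm{d}x)\,\mathrm{d}t$ and $\rho\overline{F}(\mathrm{d}x)\,\mathrm{d}t$ exactly as you do; for the exponential kernel your direct pathwise differentiation of the sums is the same computation written without the measure-valued formalism, which the paper retains only because it generalizes to non-exponential kernels (e.g.\ delayed ones). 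One small correction to your closing remark: finite-horizon integrability of $\lambda$ does not rest on subcriticality — a Gronwall argument gives $\mathbb{E}[\lambda_t]<\infty$ for every finite $t$ in any regime, and indeed it must, since the downstream formulas explicitly cover the critical case $\delta=m^{bl}$, i.e.\ $\lVert\phi\rVert=1$.
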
 

\begin{proof}
We give here the proof to find the two ODEs {given by (\ref{ODES_lambda_t})} for $s<t<\ell$ and for $\ell \le s<t$. Regarding the case of $s<\ell<t$, we provide the expression of $\mathbb{E}\left[ \lambda_t \bigm| \mathcal{F}_s\right]$  in equation (\ref{eq_conditional_expectation_case_3}) by using the tower property $\mathbb{E}\left[ \lambda_t \bigm| \mathcal{F}_s\right]= \mathbb{E}\left[\mathbb{E}\left[ \lambda_t \bigm| \mathcal{F}_{\ell}\right] \bigm|  \mathcal{F}_{s} \right] .$ We start by writing the intensity as: 

\begin{equation*}
\lambda_t = \begin{cases}  
\lambda_0 + \sum_{\overline{T_k} < t} \overline{\phi}\left(t-\overline{T_k}, \overline{Y_k}\right)+\sum_{T_i < t} \phi\left(t-T_i, Y^{bl}_i\right) & \text{if } 0<t <\ell \\ 
\alpha_0 \lambda_{0} + \alpha_1 (\lambda_{\ell^{-}} - \lambda_0) & \text{if } t = \ell \\ 
\alpha_0 \lambda_0  + \alpha_1 (\lambda_{\ell^{-}} - \lambda_0) e^{-\delta(t-\ell)}   +  \sum_{\ell<T_i < t} \phi\left(t-T_i, Y^{al}_i\right) & \text{if } t > \ell 
\end{cases}
\end{equation*}

$
\begin{aligned}
\text{We take here } \quad
 \overline{\phi}(a, x)=\phi(a, x)=x e^{-\delta a}.
\end{aligned}
$
This can be generalized to different kernels, such as the  kernel with delay for example as in \cite{Multi-Variate_HawkesBessy}. We introduce then the following measure-valued processes:
\begin{equation*}
\begin{aligned}
& \overline{Z_t}(\mathrm{d}a,\mathrm{d}x)=\sum_{\overline{T_k} \le t} \delta_{\left(t-\overline{T_k}, \overline{Y_k}\right)}\left(\mathrm{d}a, \mathrm{d}x\right) \\
& Z_t(\mathrm{d}a,\mathrm{d}x)= \begin{cases}
    \sum_{T_i \le t} \delta_{(t-T_i, Y^{bl}_i)}\left(\mathrm{d}a,\mathrm{d}x\right)  & \text{if } 0 < t < \ell \\
    \sum_{T_i \le \ell} \delta_{(t-T_i, Y^{bl}_i)}\left(\mathrm{d}a,\mathrm{d}x\right) + \sum_{\ell<T_i \le t} \delta_{(t-T_i, Y^{al}_i)}\left(\mathrm{d}a,\mathrm{d}x\right) & \text{if } t \ge \ell
\end{cases}
\end{aligned}
\end{equation*}

and the following notations $\langle Z_t, f \rangle=\int_{\mathbb{R}^{+}} \int_{\mathbb{R}^{+}} f\left(a,x\right) Z_t\left(\mathrm{~d} a, \mathrm{~d} x\right)$ and $ \langle \overline{Z_t}, f \rangle=\int_{\mathbb{R}^{+}} \int_{\mathbb{R}^{+}} f\left(a,x\right) \overline{Z_t}\left(\mathrm{~d}  a, \mathrm{~d} x\right) $. 
 For example, the Hawkes process is $N_t = \langle Z_t, \mathbf{1}\rangle $, whereas the number of external shocks is $\overline{N_t}= \langle \overline{Z_t}, \mathbf{1}\rangle$. Also, the intensity $\lambda_t$ of the Hawkes process $N_t$ can be written as: 
\begin{equation*}
\lambda_{t} =
\begin{cases}
\lambda_0 + \langle \overline{Z_{t^{-}}}, \overline{\phi} \rangle + \langle Z_{t^{-}}, \phi\rangle  & \text{if } 0<t <\ell \\
\alpha_0 \lambda_{0} + \alpha_1 (\lambda_{\ell^{-}} - \lambda_0) & \text{if } t = \ell \\ 
\alpha_0 \lambda_0 + \alpha_1 e^{-\delta (t-\ell)} \langle \overline{Z_{\ell}}, \overline{\phi} \rangle + (\alpha_1 - 1) e^{-\delta (t-\ell)} \langle Z_{\ell}, \phi \rangle + \langle Z_{t^{-}}, \phi \rangle & \text{if } 0<\ell<t .
\end{cases} 
\end{equation*}
Let $0\le s<t$, we introduce $ \overline{v}_t = \mathbb{E}[\langle \overline{Z_t}, \overline{\phi}  \rangle \bigm| \mathcal{F}_s ]$ and
$
v_t = \mathbb{E}[\langle Z_t, \phi  \rangle \bigm| \mathcal{F}_s ].
$ {The goal here is to determine the ODE satisfied by $\mathbb{E}\left[\lambda_{t} \bigm| \mathcal{F}_s \right]$ by investigating the ODEs satisfied by $v_t$ and $\overline{v}_t$.}
Let $N$ and $\overline{N}$ be two random point measures where: \\ 
$\overline{N}(\mathrm{d} t, \mathrm{d} x) = \sum_{k \geq 1} \delta_{\left(\overline{T_k}, \overline{Y}_k\right)}(\mathrm{d} t, \mathrm{d} x)$ and $N(\mathrm{d} t, \mathrm{d} x) = \sum_{n \geq 1} \delta_{\left(T_n, Y_n\right)}(\mathrm{d} t, \mathrm{d} x)$.

Following (5) in \cite{boumezoued2016population}, and as $\phi$ is the exponential kernel with decay parameter $\delta$:
\begin{eqnarray*}
\mathrm{d}\langle Z_t, \phi\rangle&=&\int_{x \in \mathbb{R^{+}}} \left[ \phi(0, x) N(\mathrm{d}t, \mathrm{d}x) \right] +\langle Z_t, \frac{\partial \phi}{\partial a} \rangle \mathrm{d}t\\
&=&\int_{x \in \mathbb{R^{+}}} \left[ \phi(0, x) N(\mathrm{d}t, \mathrm{d}x) \right] -\delta \langle Z_t, \phi \rangle \mathrm{d}t 
\end{eqnarray*}
Then, using $\int_{s}^{t} \mathrm{d}\left \langle Z_{\tau}, \phi \right \rangle  = 
\left \langle Z_t, \phi \right \rangle - \left \langle Z_s, \phi \right \rangle$ and $\phi(0,x) = x$:
\begin{equation}\label{eqv1}
\mathbb{E}\left[\left \langle Z_t, \phi \right \rangle \bigm| \mathcal{F}_s\right] = \left \langle Z_s, \phi \right \rangle + \mathbb{E}\left[ \int_{s}^t \left[ \int_{x \in \mathbb{R^{+}}} x N(\mathrm{d}u, \mathrm{d}x) -\delta \langle Z_u, \phi\rangle \mathrm{d}u \right] \bigm| \mathcal{F}_s \right]  
\end{equation}
In the same way:
\begin{equation}\label{eqv2}
\mathbb{E}\left[ \langle \overline{Z_t}, \overline{\phi}  \rangle \bigm| \mathcal{F}_s\right]  =  \langle \overline{Z_s}, \overline{\phi}  \rangle + \mathbb{E}\left[ \int_{s}^t \left[ \int_{x \in \mathbb{R^{+}}} x \overline{N}(\mathrm{d}u, \mathrm{d}x) -\delta \langle \overline{Z_u}, \overline{\phi}\rangle \mathrm{d}u \right] \bigm| \mathcal{F}_s \right] 
\end{equation}
Since the compensating measure of $\overline{N}(\mathrm{d} t, \mathrm{~d} x)$ is $\rho \overline{F}(\mathrm{d}x)\mathrm{d}t$ and that of  $N(\mathrm{d} t, \mathrm{~d} x)$ is  $\lambda_t G_t(\mathrm{d}x)\mathrm{d}t$, where  $G_t := G^{bl} \mathrm{1}_{t < \ell} + G^{al} \mathrm{1}_{t>\ell}$, 
the two processes  $X_t = \int_{0}^t\int_{x \in \mathbb{R^{+}}} x \left[ N(\mathrm{d}u, \mathrm{d}x) - \lambda_u G_u(\mathrm{d}x) \mathrm{d}u \right] $ and $\overline{X_t} = \int_{0}^t\int_{x \in \mathbb{R^{+}}} x \left[ \overline{N}(\mathrm{d}u, \mathrm{d}x) - \rho \overline{F}(\mathrm{d}x) \mathrm{d}u \right] $ are two 
martingales. Therefore: 
\begin{equation*}
 \begin{aligned}
    \mathbb{E} \left[ \int_{s}^t\int_{x \in \mathbb{R^{+}}} x \left[ N(\mathrm{d}u, \mathrm{d}x) \right] \bigm| \mathcal{F}_s  \right] & =  \mathbb{E}\left[ \int_{s}^t \int_{x \in \mathbb{R^{+}}}  x G_u(\mathrm{d}x) \lambda_u \mathrm{d}u \bigm| \mathcal{F}_s \right] \\ 
     \mathbb{E} \left[ \int_{s}^t\int_{x \in \mathbb{R^{+}}} x \left[ \overline{N}(\mathrm{d}u, \mathrm{d}x) \right] \bigm| \mathcal{F}_s  \right] & = \overline{m} \rho (t-s) \\ 
\end{aligned}   
\end{equation*}

Then \eqref{eqv1} and \eqref{eqv2} can be rewritten respectively

\begin{equation*}
\begin{aligned}
    v_t    & =  \langle Z_s, \phi\rangle +  \int_s^{t} \int_{x \in \mathbb{R^{+}}} x G_u(\mathrm{d}x) \mathbb{E} \left[  \lambda_u \bigm| \mathcal{F}_s \right]  \mathrm{d}u - \delta \int_s^t v_\tau \mathrm{d}\tau \\ 
    \overline{v}_t  & =  \langle \overline{Z}_s, \overline{\phi}\rangle +  \rho \overline{m} (t-s) - \delta \int_s^t \overline{v}_u \mathrm{d}u
\end{aligned}
\end{equation*}

and 
\begin{equation*}
    v_t = \begin{cases}
         \langle Z_s, \phi\rangle + m^{bl} \int_s^t \mathbb{E} \left[  \lambda_u \bigm| \mathcal{F}_s \right] \mathrm{d}u - \delta \int_s^t v_\tau \mathrm{d}\tau & \text {if }  0<s<t <\ell \\ 
         \langle Z_s, \phi\rangle + m^{al} \int_s^t \mathbb{E} \left[  \lambda_u \bigm| \mathcal{F}_s \right] \mathrm{d}u - \delta \int_s^t v_\tau \mathrm{d}\tau & \text {if }  0<\ell \le s<t  
    \end{cases}
\end{equation*}

Putting this together leads to:
{
\small
\begin{equation*}
    \mathbb{E}\left[\lambda_{t} \bigm| \mathcal{F}_s \right] =
    \begin{cases}
    \begin{aligned}
          \lambda_0 &  + \langle Z_s, \phi\rangle + m^{bl} \int_s^{t} \mathbb{E} \left[  \lambda_u \bigm| \mathcal{F}_s \right] \mathrm{d}u \\ 
         & - \delta \int_s^t v_\tau \mathrm{d}\tau + \langle \overline{Z}_s, \overline{\phi}\rangle +  \rho \overline{m} (t-s) - \delta \int_s^t \overline{v}_u \mathrm{d}u &  \text{if } 0<s<t <\ell
       \\   
    \alpha_0 \lambda_0 & + \langle Z_s, \phi\rangle + m^{al} \int_s^{t} \mathbb{E} \left[  \lambda_u \bigm| \mathcal{F}_s \right] \mathrm{d}u \\ 
    &- \delta \int_s^t v_\tau \mathrm{d}\tau +  \alpha_1 e^{-\delta (t-\ell)}  \overline{v}(\ell^{-}) \\ 
         & + (\alpha_1-1) e^{-\delta (t-\ell)} v(\ell^{-}) &  \text{if } 0<\ell \le s<t 
    \end{aligned}
\end{cases} 
\end{equation*}
}

By  rearranging the different terms we have the following: 

{\small
\begin{equation*}
\mathbb{E}\left[\lambda_{t} \bigm| \mathcal{F}_s \right]=
\begin{cases}
\lambda_s + \delta \lambda_0 (t-s) - \left(\delta-m^{bl} \right) \int_{s}^{t} \mathbb{E}\left[\lambda_{u} \bigm| \mathcal{F}_s \right] \mathrm{~d}u + \rho \overline{m} (t-s)  & \text{if } 0<s<t<\ell \\
\lambda_s + \alpha_0 \delta \lambda_0 (t-s) - \left(\delta-m^{al} \right) \int_{s}^{t} \mathbb{E}\left[\lambda_{u} \bigm| \mathcal{F}_s \right] \mathrm{~d}u & \text{if } 0<\ell \le s<t 
\end{cases}
\end{equation*}
}

\end{proof}

\begin{proposition}
\label{prop2}
    The conditional expectation of  $\lambda_t$ given $\mathcal{F}_s $ for $0<s<t<\ell$ is:
\begin{equation*}
\mathbb{E}\left[\lambda_{t} \bigm| \mathcal{F}_s \right] = 
\begin{cases}
\lambda_s + (\delta \lambda_0  + \rho \overline{m}) (t-s) & \text {if }  \delta=m^{bl}
\\ 
\frac{\rho \overline{m} + \delta \lambda_0}{\delta - m^{bl}} + (\lambda_s - \frac{\rho \overline{m} + \delta \lambda_0}{\delta - m^{bl}} )e^{- (\delta - m^{bl})(t- s)} & \text {if } \delta \neq m^{bl} 
\end{cases} 
\end{equation*}

The conditional expectation of $\lambda_t$ given $\mathcal{F}_s $ for $\ell<s<t$ is:
\begin{equation}
\mathbb{E}\left[\lambda_{t} \bigm| \mathcal{F}_s \right] = 
\begin{cases}
\lambda_s + \alpha_0 \delta \lambda_0 (t-s) & \text {if }  \delta=m^{al}
\\ 
\frac{\alpha_0 \delta \lambda_0}{\delta - m^{al}} + (\lambda_s - \frac{\alpha_0 \delta \lambda_0}{\delta - m^{al}} )e^{- (\delta - m^{al})(t- s)} & \text {if } \delta \neq m^{al} 
\end{cases} 
\label{eq_conditional_expectation_case_2}
\end{equation}

The conditional expectation of $\lambda_t$ given $\mathcal{F}_s $ for $s<\ell<t$ is:
{
\small
\begin{equation}
\mathbb{E}\left[\lambda_{t} \bigm| \mathcal{F}_s \right] = 
\begin{cases}
\alpha_0 \delta \lambda_0 (t-\ell) +\lambda_0 (\alpha_0 - \alpha_1) + \alpha_1 \mathbb{E}\left[\lambda_{\ell^{-}} \bigm| \mathcal{F}_s \right] & \text {if }  \delta = m^{al} 
\\ 
\frac{\alpha_0 \delta \lambda_0}{\delta - m^{al}} + ( (\alpha_0 - \alpha_1) \lambda_0 +\alpha_1 \mathbb{E}\left[\lambda_{\ell^{-}} \bigm| \mathcal{F}_s \right] - \frac{\alpha_0 \delta \lambda_0}{\delta - m^{al}} )e^{- (\delta - m^{al})(t- \ell)} & \text {if } \delta \ne  m^{al} \\ 
\end{cases} 
\label{eq_conditional_expectation_case_3}
\end{equation}
with: 
\begin{equation*}
\mathbb{E}\left[\lambda_{\ell^{-}} \bigm| \mathcal{F}_s \right] = 
\begin{cases}
\lambda_s + (\delta \lambda_0  + \rho \overline{m}) (\ell-s) & \text {if }  \delta=m^{bl}
\\ 
\frac{\rho \overline{m} + \delta \lambda_0}{\delta - m^{bl}} + (\lambda_s - \frac{\rho \overline{m} + \delta \lambda_0}{\delta - m^{bl}} )e^{- (\delta - m^{bl})(\ell- s)} & \text {if } \delta \neq m^{bl} 
\end{cases} 
\end{equation*}
}
\end{proposition}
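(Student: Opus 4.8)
The plan is to turn each integral equation established in Proposition~\ref{prop1} into a first-order linear ODE and solve it explicitly, treating the straddling regime $s<\ell<t$ separately by conditioning at the reaction time.

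Write $g(t) := \mathbb{E}\left[\lambda_t \bigm| \mathcal{F}_s\right]$. For $0<s<t<\ell$, Proposition~\ref{prop1} gives $g(t) = \lambda_s + (\delta\lambda_0 + \rho\overline{m})(t-s) - (\delta - m^{bl})\int_s^t g(u)\,\mathrm{d}u$. Evaluating at $t=s$ yields the initial condition $g(s)=\lambda_s$ (which is $\mathcal{F}_s$-measurable), and since the right-hand side is absolutely continuous in $t$, differentiating produces the linear ODE $g'(t) = (\delta\lambda_0 + \rho\overline{m}) - (\delta-m^{bl})\,g(t)$. I would integrate this with the integrating factor $e^{(\delta-m^{bl})t}$. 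When $\delta\neq m^{bl}$ this gives $g(t) = \frac{\rho\overline{m}+\delta\lambda_0}{\delta-m^{bl}} + \big(\lambda_s - \frac{\rho\overline{m}+\delta\lambda_0}{\delta-m^{bl}}\big)e^{-(\delta-m^{bl})(t-s)}$, and when $\delta = m^{bl}$ the ODE collapses to $g'(t)=\delta\lambda_0+\rho\overline{m}$, giving the affine solution $g(t)=\lambda_s+(\delta\lambda_0+\rho\overline{m})(t-s)$; these are the two stated branches. The case $\ell<s<t$ is identical after replacing the source $\delta\lambda_0+\rho\overline{m}$ by $\alpha_0\delta\lambda_0$ and the coefficient $\delta-m^{bl}$ by $\delta-m^{al}$, which yields \eqref{eq_conditional_expectation_case_2}.

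For the straddling case $s<\ell<t$, I would use the tower property $g(t)=\mathbb{E}\big[\mathbb{E}[\lambda_t\mid\mathcal{F}_\ell]\mid\mathcal{F}_s\big]$. Since $t>\ell$, the inner expectation $\mathbb{E}[\lambda_t\mid\mathcal{F}_\ell]$ is given by the second-phase formula \eqref{eq_conditional_expectation_case_2} started at $\ell$, with the post-jump value $\lambda_\ell = (\alpha_0-\alpha_1)\lambda_0 + \alpha_1\lambda_{\ell^-}$ substituted in place of the ``$\lambda_s$'' entry. Taking $\mathbb{E}[\,\cdot\mid\mathcal{F}_s]$ of this expression, which is affine in $\lambda_\ell$, and using linearity reduces everything to computing $\mathbb{E}[\lambda_{\ell^-}\mid\mathcal{F}_s]$, which is the first-phase formula of the previous paragraph evaluated as $t\uparrow\ell$ (the conditional-expectation map being left-continuous at $\ell$, since the jump there is deterministic). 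Substituting this back gives exactly \eqref{eq_conditional_expectation_case_3} in both the $\delta=m^{al}$ and $\delta\neq m^{al}$ branches.

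The computations are routine once the equations are set up; the only genuine subtlety is the bookkeeping at the reaction time $\ell$. One must correctly propagate the deterministic jump $\lambda_{\ell^-}\mapsto\lambda_\ell=(\alpha_0-\alpha_1)\lambda_0+\alpha_1\lambda_{\ell^-}$ through the tower property and justify that $\mathbb{E}[\lambda_{\ell^-}\mid\mathcal{F}_s]$ coincides with the left limit of the first-phase solution. Care is also needed to track which sub-regime applies in each factor, since the straddling formula mixes the first-phase exponent $\delta-m^{bl}$ inside $\mathbb{E}[\lambda_{\ell^-}\mid\mathcal{F}_s]$ with the second-phase exponent $\delta-m^{al}$ outside.
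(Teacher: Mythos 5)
Your proposal is correct and follows essentially the same route as the paper: Proposition~\ref{prop1}'s integral equations are differentiated into first-order linear ODEs with initial condition $g(s)=\lambda_s$ and solved (branching on $\delta = m^{bl}$ or $m^{al}$ versus not), and the straddling case $s<\ell<t$ is handled via the tower property $\mathbb{E}[\lambda_t\mid\mathcal{F}_s]=\mathbb{E}[\mathbb{E}[\lambda_t\mid\mathcal{F}_\ell]\mid\mathcal{F}_s]$ with the post-jump value $\lambda_\ell=(\alpha_0-\alpha_1)\lambda_0+\alpha_1\lambda_{\ell^-}$ fed into the second-phase solution, exactly as in the paper. One small remark: identifying $\mathbb{E}[\lambda_{\ell^-}\mid\mathcal{F}_s]$ with the first-phase formula at $t=\ell$ is justified by conditional dominated convergence as $t\uparrow\ell$ (the intensity being dominated on $[s,\ell]$ by an integrable variable) together with continuity of the closed-form solution, rather than by the determinism of the jump at $\ell$, which the left limit never sees; this technicality is glossed over in the paper as well.
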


\begin{proof}
    By solving the two first order ODEs in equation (\ref{ODES_lambda_t}), we find (\ref{eq_conditional_expectation_case_2}) and (\ref{eq_conditional_expectation_case_3}). Then Relation (\ref{eq_conditional_expectation_case_3}) follows using $\mathbb{E}\left[ \lambda_t \bigm| \mathcal{F}_s\right]= \mathbb{E}\left[\mathbb{E}\left[ \lambda_t \bigm| \mathcal{F}_{\ell}\right] \bigm|  \mathcal{F}_{s} \right] $. 
\end{proof}

\begin{proposition}
\label{prop_annewe}
    The conditional expectation of  $N_t$ given $\mathcal{F}_s $ for $0<s<t<\ell$ is:
    \begin{equation*}
    \mathbb{E}\left[N_{t} \bigm| \mathcal{F}_s \right] = 
\begin{cases}
N_{s}+\lambda_{s}(t-s)+\frac{1}{2} (\rho \overline{m} + \delta \lambda_0) (t-s)^{2} & \text {if }  \delta=m^{bl}
\\ 
N_{s}+\frac{(\rho \overline{m} + \delta \lambda_0) }{\delta - m^{bl}} (t-s)+\left(\lambda_{s}-\frac{\rho \overline{m} + \delta \lambda_0 }{\delta - m^{bl}}\right) \frac{1-e^{-(\delta - m^{bl})(t-s)}}{\delta - m^{bl}} & \text {if } \delta \neq m^{bl} 
\end{cases} 
    \end{equation*}
    The conditional expectation of the process $N_t$ given $\mathcal{F}_s $ for $0<\ell<s<t$  is:
\begin{equation*}
\mathbb{E}\left[N_{t} \bigm| \mathcal{F}_s \right] = 
\begin{cases}
N_{s}+\lambda_{s}(t-s)+\frac{1}{2} \alpha_0 \delta \lambda_0 (t-s)^{2} & \text {if }  \delta=m^{bl}
\\ 
N_{s}+\frac{\alpha_0 \delta \lambda_0 }{\delta - m^{bl}} (t-s)+\left(\lambda_{s}-\frac{\alpha_0 \delta \lambda_0 }{\delta - m^{bl}}\right) \frac{1-e^{-(\delta - m^{bl})(t-s)}}{\delta - m^{bl}} & \text {if } \delta \neq m^{bl} 
\end{cases} 
\end{equation*}

The conditional expectation of $N_t$ given $\mathcal{F}_s $ for $0<s<\ell<t$ is:
{
\scriptsize
\begin{equation*}
\mathbb{E}\left[N_{t} \bigm| \mathcal{F}_s \right] = 
\begin{cases}
\begin{aligned}
\mathbb{E}\left[N_{\ell} \bigm| \mathcal{F}_s \right] & + \frac{\alpha_0 \delta \lambda_0}{2} (t-\ell) ^2 + \lambda_0 (\alpha_0 - \alpha_1) (t-\ell) \\ & + \alpha_1 \mathbb{E}\left[\lambda_{\ell^{-}}  \bigm| \mathcal{F}_s \right] (t-\ell) & \text {if }  \delta = m^{al} 
\\ 
\mathbb{E}\left[N_{\ell} \bigm| \mathcal{F}_s \right]  & + 
\frac{\alpha_0 \delta \lambda_0}{\delta - m^{al}} (t-\ell)  \\ & + \left( (\alpha_0 - \alpha_1) \lambda_0 + \alpha_1 \mathbb{E}\left[\lambda_{\ell^{-}} \bigm| \mathcal{F}_s \right] - \frac{\alpha_0 \delta \lambda_0}{\delta - m^{al}}\right) \\ & \frac{1}{(\delta - m^{al})}(1 - e^{- (\delta - m^{al})(t- \ell)}) & \text {if } \delta \ne  m^{al} \\ 
\end{aligned}
\end{cases} 
\end{equation*}
}
with  
\begin{equation*}
\mathbb{E}\left[\lambda_{\ell^{-}} \bigm| \mathcal{F}_s \right] = 
\begin{cases}
\lambda_s + (\delta \lambda_0  + \rho \overline{m}) (\ell-s) & \text {if }  \delta=m^{bl}
\\ 
\frac{\rho \overline{m} + \delta \lambda_0}{\delta - m^{bl}} + (\lambda_s - \frac{\rho \overline{m} + \delta \lambda_0}{\delta - m^{bl}} )e^{- (\delta - m^{bl})(\ell- s)} & \text {if } \delta \neq m^{bl} 
\end{cases} 
\end{equation*}
and
\color{black}
\begin{equation*}
\mathbb{E}\left[N_{\ell} \bigm| \mathcal{F}_s \right] = 
\begin{cases}
N_{s}+\lambda_{s}(\ell-s)+\frac{1}{2} (\rho \overline{m} + \delta \lambda_0) (\ell-s)^{2} & \text {if }  \delta=m^{bl}
\\ 
N_{s}+\frac{(\rho \overline{m} + \delta \lambda_0) }{\delta - m^{bl}} (\ell-s)+\left(\lambda_{s}-\frac{\rho \overline{m} + \delta \lambda_0 }{\delta - m^{bl}}\right) \frac{1-e^{-(\delta - m^{bl})(\ell-s)}}{\delta - m^{bl}} & \text {if } \delta \neq m^{bl} 
\end{cases} 
\end{equation*}    
\end{proposition}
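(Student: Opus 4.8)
The plan is to deduce the expectation of $N_t$ directly from the expectation of the intensity obtained in Proposition \ref{prop2}, using the martingale property of the compensated counting process. Recall that, by definition of the $\mathbb{F}$-intensity $\lambda_t$ of $N_t$, the process $M_t := N_t - \int_0^t \lambda_u \, \mathrm{d}u$ is an $\mathbb{F}$-martingale. Taking conditional expectations given $\mathcal{F}_s$ and rearranging yields the key identity
\begin{equation*}
\mathbb{E}\left[N_t \bigm| \mathcal{F}_s\right] = N_s + \int_s^t \mathbb{E}\left[\lambda_u \bigm| \mathcal{F}_s\right] \mathrm{d}u,
\end{equation*}
where the exchange of the conditional expectation and the time integral is justified by Fubini--Tonelli since $\lambda_u \ge 0$. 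Hence the whole proof reduces to integrating in $u$ the closed-form expressions for $\mathbb{E}[\lambda_u \mid \mathcal{F}_s]$ established in Proposition \ref{prop2}, treating the three regimes separately.

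For the first regime $0 < s < t < \ell$, I would substitute the ``before-$\ell$'' expression for $\mathbb{E}[\lambda_u \mid \mathcal{F}_s]$ and integrate from $s$ to $t$. In the boundary case $\delta = m^{bl}$ the integrand is affine in $u$, producing the quadratic term $\tfrac12(\rho\overline{m}+\delta\lambda_0)(t-s)^2$; in the generic case $\delta \neq m^{bl}$ the constant part integrates to a linear term and the exponential part integrates to $\frac{1-e^{-(\delta-m^{bl})(t-s)}}{\delta - m^{bl}}$, giving exactly the stated formula. The second regime $0<\ell<s<t$ is handled identically, now feeding the after-$\ell$ intensity formula \eqref{eq_conditional_expectation_case_2} into the same integral; its constant-plus-exponential structure is unchanged, so the same integration pattern yields the stated expression.

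The delicate case is the straddling regime $0 < s < \ell < t$, where the intensity is reset at $\ell$ and two different branching exponents ($m^{bl}$ before $\ell$, $m^{al}$ after) are at play. Here I would split the integral at $\ell$:
\begin{equation*}
\mathbb{E}\left[N_t \bigm| \mathcal{F}_s\right] = \mathbb{E}\left[N_\ell \bigm| \mathcal{F}_s\right] + \int_\ell^t \mathbb{E}\left[\lambda_u \bigm| \mathcal{F}_s\right] \mathrm{d}u.
\end{equation*}
The first summand $\mathbb{E}[N_\ell \mid \mathcal{F}_s]$ is obtained from the first regime evaluated at $t = \ell$, and it carries $\mathbb{E}[\lambda_{\ell^-}\mid\mathcal{F}_s]$ through the same before-$\ell$ formula. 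For the remaining integral I would insert the straddling expression \eqref{eq_conditional_expectation_case_3} for $\mathbb{E}[\lambda_u\mid\mathcal{F}_s]$, in which the reset at $\ell$ has already been encoded via the tower property $\mathbb{E}[\lambda_u\mid\mathcal{F}_s]=\mathbb{E}[\mathbb{E}[\lambda_u\mid\mathcal{F}_\ell]\mid\mathcal{F}_s]$; integrating its constant and exponential parts over $(\ell,t)$ reproduces the two sub-cases $\delta=m^{al}$ and $\delta\neq m^{al}$.

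I expect the main obstacle to be purely one of bookkeeping rather than analysis: correctly propagating the post-$\ell$ initial condition $\alpha_0\lambda_0 + \alpha_1(\lambda_{\ell^-}-\lambda_0)$ through the integration, keeping the dependence on $\mathbb{E}[\lambda_{\ell^-}\mid\mathcal{F}_s]$ explicit, and tracking which of $m^{bl}$ or $m^{al}$ governs each integral. Once the identity $\mathbb{E}[N_t\mid\mathcal{F}_s]=N_s+\int_s^t\mathbb{E}[\lambda_u\mid\mathcal{F}_s]\,\mathrm{d}u$ is established and the expressions of Proposition \ref{prop2} are plugged in, every case collapses to an elementary integration of affine or constant-plus-exponential functions.
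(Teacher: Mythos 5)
Your proposal is correct and takes essentially the same route as the paper: its proof likewise uses the martingale property of the compensated process to write $\mathbb{E}\left[N_t \bigm| \mathcal{F}_s\right] = N_s + \int_s^t \mathbb{E}\left[\lambda_u \bigm| \mathcal{F}_s\right]\mathrm{d}u$, splits the integral at $\ell$ in the straddling case to get $\mathbb{E}\left[N_t \bigm| \mathcal{F}_s\right] = \mathbb{E}\left[N_\ell \bigm| \mathcal{F}_s\right] + \int_\ell^t \mathbb{E}\left[\lambda_u \bigm| \mathcal{F}_s\right]\mathrm{d}u$, and then integrates the closed-form expressions of Proposition \ref{prop2} regime by regime.
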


\begin{proof}
 Using  the martingale property of  the compensated process, we have \\ for $0<s \le \ell<t$: 
    \begin{eqnarray*}
        \mathbb{E}\left[N_t \bigm| \mathcal{F}_s \right]  &=&
N_s + \int_s^{t}\mathbb{E}\left[ \lambda_u \bigm| \mathcal{F}_s\right]\mathrm{~d} u  \\
  &=& N_s + \int_s^{\ell}\mathbb{E}\left[ \lambda_u \bigm| \mathcal{F}_s \right]\mathrm{~d} u + \int_{\ell}^{t}\mathbb{E}\left[ \lambda_u \bigm| \mathcal{F}_s \right] \mathrm{~d} u  \\
    & =& \mathbb{E}\left[N_{\ell} \bigm| \mathcal{F}_s \right] + \int_{\ell}^{t}\mathbb{E}\left[ \lambda_u \bigm| \mathcal{F}_s \right]\mathrm{~d} u.
      \end{eqnarray*}    
\end{proof}

\section{MSE Calibration strategies} \label{annex_mse_calibration_strategies}

An attempt to improve MSE results was made through the following strategies: 

\begin{enumerate}
    \item The first estimation procedure attempts to simultaneously estimate all 5 parameters $(\lambda_0, \rho, \overline{m}, m, \delta)$. However, the results shown in Table~\ref{five_sim_calibration} indicate that the estimates are not accurate. 
    
    \item To address the accuracy issue, a second approach is tested. This approach involves isolating the estimation of the $\rho$ parameter by leveraging external events and subsequently incorporating its value into the minimization problem. The focus of estimation then shifts to the remaining four parameters, namely $(\lambda_0, \overline{m}, m, \delta)$. This unfortunately did not improve the accuracy of the estimates, as shown in Table~\ref{4_param_given_rho}.
    
    \item The third approach in Table~\ref{4_param_given_rho_m_bar} resembles the second one, with the distinction of incorporating not only $\rho$ but also $\rho \overline{m}$ into the minimization problem. This redefines the optimization targets as $(\lambda_0, \rho, m, \delta)$, while the value of $\overline{m}$ is derived by dividing $\rho \overline{m}$ by $\rho$. This allows a refined estimation of the $\lambda_0$ parameter.
\end{enumerate}

\subsection{Simultaneous estimation of the 5 parameters} \label{sim_example_results}

{In Table~\ref{five_sim_calibration}, the results of the first approach, which aims to perform a simultaneous estimation of the five parameters, are displayed. Among these results, $\lVert \phi \rVert$ and $\frac{\rho \lVert \overline{\phi} \rVert + \lambda_0}{1 - \lVert \phi \rVert}$ are relevant. This table also includes the opposite of the log-likelihood values, the MSE outcomes, and the average computational time taken for 1000 calibration runs. The computation of the 95 \% confidence intervals for the parameters estimated using the likelihood method are detailed in Appendix \ref{annex_ic_likelihood}.
}

\clearpage
\begin{sidewaystable}[h]
\caption{Calibration results on a simulated example: simultaneous estimation of the 5 parameters}
\label{five_sim_calibration}
\begin{tabular*}{\textheight}{@{\extracolsep\fill}p{2cm}p{1cm}p{1cm}p{1cm}p{1cm}p{1cm}p{1cm}p{1cm}p{1cm}p{1cm}p{1cm}p{1cm}}
\toprule
& $\boldsymbol{\lambda_0}$ & $\boldsymbol{\rho}$ & $\boldsymbol{\overline{m}}$ & $\boldsymbol{m}$ & $\boldsymbol{\delta}$ & ${\lVert \phi \rVert}$ & $\boldsymbol{\frac{\rho \hspace{1mm} \lVert \overline{\phi} \rVert + \lambda_0}{1 - \lVert \phi \rVert}}$ & $\boldsymbol{-ln(\mathcal{L})}$ & $\frac{\textbf{MSE}^{\textbf{Ext}}}{(N_{\tau}-N_s)}$ & $\frac{\textbf{MSE}^{\textbf{Int}}}{(N_{\tau} - N_s)}$ & \textbf{Av. Exec. time in (s) on 1000 runs} \\
\midrule
\textbf{True values} & 0.6 & 0.2 & 0.8 & 0.5 & 1.5 & 0.33 & 1.06 & 783.55 &  0.726\% & 0.653 \% & - \\ 
\textbf{Likelihood method} & 0.59 & 0.19 & 0.82 & 0.51 & 1.44 & 0.36 & 1.09 & \bf{784.22} & 0.771\% & 0.756 \% & 22.14 \\ 
\textbf{95 \% C.I} & [0.49,0.72] & [0.17,0.23] & [0.47,1.15] & [0.33,0.68] & [1.02,2.04] & - & - & - & - & - & - \\ 
$\textbf{MSE}^{\textbf{Int}}$ & 0.022 & 0.39 & 1.49e-05 & 0.263 & 0.27 & 0.97 & 1.065 & 2947.28 & 0.823\% & 0.729\% & 18.73 \\ 
$\textbf{MSE}^{\textbf{Ext}}$ & 0.143 & 0.188 & 0.663 & 0.262 & 0.439 & 0.59 & 1.062 & 1837.38 & \bf{0.7251 \%} & \bf{0.6607\%} & 19.05 \\
\botrule
\end{tabular*}
\end{sidewaystable}

\clearpage

The following observations can be made: 
\begin{itemize}
    \item The likelihood estimation method provides parameter estimates close to the simulated ones. On the other hand, $\text{MSE}^{\text{Int}}$ and $\text{MSE}^{\text{Ext}}$ methods produce estimates far from the true values, except for the $\rho$ value for the $\text{MSE}^{\text{Ext}}$, which is expected. An attempt to address the accuracy issue in the MSE method has been made by repeating the procedure multiple times. Each iteration began with the outcome of the previous optimization as its starting point. Unfortunately, this did not improve the accuracy. 
    \item Both MSE methods have an identifiability issue, which means that it is challenging to uniquely determine the exact values of the parameters, as multiple sets of parameter values result in similar MSE values.
    \item The likelihood method achieves a closer $\lVert \phi \rVert$  to the true value compared to the MSE methods. The $\text{MSE}^{\text{Int}}$ result approaches the critical regime as 0.97 is close to 1. 
    \item Both the $\text{MSE}^{\text{Int}}$ and $\text{MSE}^{\text{Ext}}$ methods provide a value of $\frac{\rho \hspace{1mm} \lVert \overline{\phi} \rVert + \lambda_0}{1 - \lVert \phi \rVert}$ that is close to the real one compared with that obtained with the likelihood method. This is due to the fact that the MSE method tries to match the expectation of the process while the likelihood method takes into account the whole distribution, not only the expectation.  
    \item Both MSE methods allow for a faster calibration compared to the likelihood maximization method. In the case where the whole distribution is not available, these two methods can estimate the order of magnitude of the expectation of the process and track its evolution over time.
    \item The values highlighted in bold represent the lowest estimated $-ln(\mathcal{L})$, $\text{MSE}^{\text{Ext}},$ and $\text{MSE}^{\text{Int}}$. As expected, the maximum likelihood estimation method has the smallest $-ln(\mathcal{L})$. The $\text{MSE}^{\text{Ext}}$ estimation method on the other hands results in smaller mean squares errors compared with the $\text{MSE}^{\text{Int}}$ method. 
\end{itemize}

\subsection{Estimation with injected value of $\rho$}

As the arrival times of external events can be observed, the parameter $\rho$ can be estimated separately and injected in the optimization problem. The optimization problem is then on: 
$$ 
(\lambda_0,\overline{m},m,\delta).
$$ 
The results are given in Table~\ref{4_param_given_rho}. The estimated value of $\rho$ is 0.19. Injecting $\rho$ does not enhance the performance of the two MSE estimation methods. {The four parameters are still misestimated}. The likelihood method is more accurate and the value of $\frac{\rho \hspace{1mm} \lVert \overline{\phi} \rVert + \lambda_0}{1 - \lVert \phi \rVert}$ remains stable and close to the actual value. The comments on the identifiability issue, the regime misestimation, the execution time and the bold metric values remain unchanged in this approach.

\clearpage

\begin{sidewaystable}[h]
\centering
\caption{Calibration results on a simulated example: simultaneous estimation of 4 parameters with given value of $\rho$}
\label{4_param_given_rho}
\begin{tabular*}{\textheight}{@{\extracolsep\fill}p{2cm}p{1cm}p{1cm}p{1cm}p{1cm}p{1cm}p{1cm}p{1cm}p{1cm}p{1cm}p{1cm}p{1cm}}
\toprule
& $\boldsymbol{\lambda_0}$ & $\boldsymbol{\overline{m}}$ & $\boldsymbol{m}$ & $\boldsymbol{\delta}$ & ${\lVert \phi \rVert}$ & $\boldsymbol{\frac{\rho \hspace{1mm} \lVert \overline{\phi} \rVert + \lambda_0}{1 - \lVert \phi \rVert}}$ & $\boldsymbol{-ln(\mathcal{L})}$ & $\frac{\textbf{MSE}^{\textbf{Ext}}}{N_{\tau}}$ & $\frac{\textbf{MSE}^{\textbf{Int}}}{N_{\tau}}$ & \textbf{Av. Exec. time in (s) on 1000 runs} \\
\midrule
\textbf{True values} & 0.6 & 0.8 & 0.5 & 1.5 & 0.33 & 1.06 & 783.55 & 0.726\% & 0.653 \% & - \\ 
\textbf{Likelihood method} & 0.62 & 0.89 & 0.53 & 1.68 & 0.31 & 1.04 & \bf{783.6} & 0.745\% & 0.737\% & 20.08 \\ 
\textbf{95 \% C.I} & [0.52,0.73] & [0.55,1.22] & [0.35,0.71] & [1.08,2.24] & [0.15,0.65] & [0.67,2.77] & - & - & - & - \\ 
$\textbf{MSE}^{\textbf{Int}}$ & 0.1 & 0.005 & 0.08 & 0.09 & 0.9 & 1.06 & 832.06 & 0.791\% & 0.718\% & 17.47 \\ 
$\textbf{MSE}^{\textbf{Ext}}$ & 0.25 & 0.08 & 0.04 & 0.05 & 0.66 & 1.06 & 828.443 & 0.725\% & 0.713\% & 18.03 \\
\botrule
\end{tabular*}
\end{sidewaystable}
\clearpage

\subsection{Estimation with injected value of $\rho \overline{m}$}

To improve the accuracy of the parameter estimates in the previous two strategies, the third approach consists of injecting the value of $\rho \overline{m}$. The optimization problem is on:
$$ 
(\lambda_0,\rho,m,\delta).
$$ 
The results are displayed in Table~\ref{4_param_given_rho_m_bar}. The value of $\overline{m}$ is deduced by dividing $\rho \overline{m}$ by the estimated value of $\rho$. Here, the injected value $\rho \overline{m} = 0.16 $. The key result of this approach is the improvement of the estimation of the $\lambda_0$ parameter and the $\lVert \phi \rVert$ norm using the $\text{MSE}^{\text{Ext}}$ method. Since the latter estimates the $\rho$ parameter accurately, injecting $\rho \overline{m}$ results in a $\lambda_0$ and $\lVert \phi \rVert$ values close to the actual ones. The other comments on $\frac{\rho \hspace{1mm} \lVert \overline{\phi} \rVert + \lambda_0}{1 - \lVert \phi \rVert}$,the identifiability issue, the regime misestimation in the $\text{MSE}^{\text{Int}}$ method, the execution time and the bold metric values remain again unchanged in this approach.

Injecting the value $\rho \overline{m}$ has improved the accuracy of the baseline intensity estimate and the regime, but it remains inadequate for estimating $\delta$ and $m$.

\clearpage
\begin{sidewaystable}[h]
\centering
\caption{Calibration results in a simulated example: simultaneous estimation of 4 parameters with given value of $\rho \overline{m}$}
\label{4_param_given_rho_m_bar}
\begin{tabular*}{\textheight}{@{\extracolsep\fill}p{2cm}p{1cm}p{1cm}p{1cm}p{1cm}p{1cm}p{1cm}p{1cm}p{1cm}p{1cm}p{1cm}p{1cm}}
\toprule
& $\boldsymbol{\lambda_0}$ & $\boldsymbol{\rho}$ & $\boldsymbol{m}$ & $\boldsymbol{\delta}$ & ${\lVert \phi \rVert}$ & $\boldsymbol{\frac{\rho \hspace{1mm} \lVert \overline{\phi} \rVert + \lambda_0}{1 - \lVert \phi \rVert}}$ & $\boldsymbol{-ln(\mathcal{L})}$ & $\frac{\textbf{MSE}^{\textbf{Ext}}}{N_{\tau}}$ & $\frac{\textbf{MSE}^{\textbf{Int}}}{N_{\tau}}$ & \textbf{Av. Exec. time in (s) on 1000 runs} \\
\midrule
\textbf{True values} & 0.6 & 0.2 & 0.5 & 1.5 & 0.33 & 1.06 & 783.55 & 0.726\% & 0.653 \% & - \\ 
\textbf{Likelihood method} & 0.63 & 0.21 & 0.61 & 1.84 & 0.33 & 1.02 & \bf{788.3} & 0.791\% & 0.718\% & 21.38 \\ 
\textbf{95 \% C.I} & [0.53,0.73] & [0.17,0.25] & [0.43,0.78] & [1.24,2.43] & [0.15,0.65] & [0.66,2.7] & - & - & - & - \\ 
$\textbf{MSE}^{\textbf{Int}}$ & 1.22E-16 & 2.18 & 1.25 & 1.4 & 0.89 & 1.06 & 1123.2 & 0.734\% & 0.694 \% & 18.53 \\ 
$\textbf{MSE}^{\textbf{Ext}}$ & 0.54 & 0.19 & 1.55 & 3.57 & 0.43 & 1.06 & 802.41 & \bf{0.738\%} & \bf{0.628 \%} & 19.01 \\
\botrule
\end{tabular*}
\end{sidewaystable}

\clearpage

\section{Choice of the  time-step  $\Delta$ in the MSE calibration method} \label{annex_step_choice_mse}

In this appendix, we explore the choice of the time-step $\Delta$ in the MSE calibration method. As presented in Section~\ref{sim_example_results}, the $MSE^{\text{ext}}$ calibration method is robust for estimating $\frac{\rho \hspace{1mm} \lVert \overline{\phi} \rVert + \lambda_0}{1 - \lVert \phi \rVert}$ quantity. Here, we display the obtained results of this ergodicity ratio from calibrations performed over various time-steps, ranging from 1 to 10. The dotted line represents the true value, and the closer the bars are to the dotted lines, the better the estimation is.

Figure~\ref{frac_mse_ext_timestep} below suggests that $\Delta$ should be selected as smaller than 4 days since the accuracy of this estimation deteriorates as $\Delta$ increases. In this context, we have opted for a compromise between 1, 2 and 3 and decided on $\Delta = 2$ days. This choice aligns with the capabilities of our high-performing server, which can handle a $\Delta$ of 2 days efficiently, whereas choosing 1 day would be more computationally demanding. Recall that the server is an Intel Weon 4310 CPU server operating at a 2.1 GHZ frequency and is equipped with 24 processors and 100 GB of memory. In comparison, my professional laptop has 4 processors and 16 GB of memory. 

\begin{figure}[H]
\centering
 \includegraphics[scale=0.45]{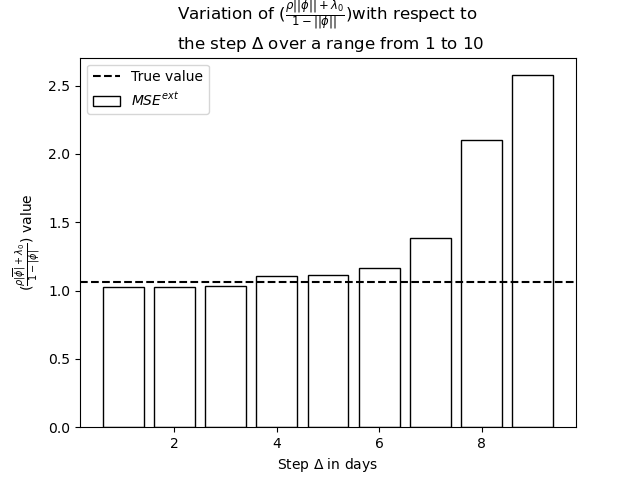}
\caption{Variation of $\frac{\rho \hspace{1mm} \lVert \overline{\phi} \rVert + \lambda_0}{1 - \lVert \phi \rVert}$ by step $\Delta$ }
\label{frac_mse_ext_timestep}
\end{figure}

\section{Computation of confidence interval in the likelihood method} \label{annex_ic_likelihood}

Recall as in (\ref{likelihood_equ2}) that: 

\begin{equation*}
     \mathcal{L}= \exp \left(-\int_s^\tau \lambda_u \mathrm{~d} u\right) \prod_{n=(N_{s}+1)}^{N_{\tau}}\hspace{-4mm}(\lambda_{t_n}) \hspace{4mm}\rho^{(\overline{N}_{\tau} - \overline{N}_{s}) } \hspace{1mm} \exp \left(-\rho\left(\overline{t}_{\overline{N}_{\tau}}-\overline{t}_{\overline{N}_s}\right)\right).
\end{equation*}

Then by taking the logarithm and using the antiderivative of an exponential function, we have the following formula:
\begin{align*}
    \ln (\mathcal{L}) & = -\int_s^\tau \lambda_u \mathrm{~d} u + \sum_{n = (N_s+1)}^{N_{\tau}} \ln(\lambda_{t_n}) + \ln(\rho) (\overline{N}_{\tau} - \overline{N}_{s} ) - \rho (\overline{t}_{\overline{N}_{\tau}} - \overline{t}_{\overline{N}_{s}})
    \\
    &  = -\lambda_0 (\tau - s) - \sum_{s<t_i<\tau} \frac{m}{\delta} (1 - e^{-\delta (\tau - t_i)}) - \sum_{s<\overline{t}_k<t} \frac{\overline{m}}{\delta} (1 - e^{-\delta (\tau - \overline{t}_k)}) \\ & + \sum_{n = (N_s+1)}^{N_{\tau}} \ln(\lambda_{t_n}) + \ln(\rho) (\overline{N}_{\tau} - \overline{N}_{s} ) - \rho (\overline{t}_{\overline{N}_{\tau}} - \overline{t}_{\overline{N}_{s}}).
\end{align*}
Since $\lambda_{t_n} = \lambda_0 + \sum_{t_i<t_n}me^{-\delta (t_n - t_i)} + \sum_{\overline{t}_k<t_n}\overline{m}e^{-\delta (t_n - \overline{t}_k)}  $ and by replacing $\lambda_{t_n}$ by its value, we have:
\begin{align*}
   \ln (\mathcal{L}) & =  -\lambda_0 (\tau - s) - \frac{m}{\delta} (N_{\tau} - N_s) + \frac{m}{\delta}\sum_{s<t_i<t} e^{-\delta (\tau - t_i)} -  \frac{\overline{m}}{\delta} (\overline{N}_{\tau} - \overline{N}_s) 
   \\ 
   +&\frac{\overline{m}}{\delta} \sum_{s<\overline{t}_k<t} e^{-\delta (\tau - \overline{t}_k)} 
   \\ 
   + & \sum_{n = (N_s+1)}^{N_{\tau}} \left[ \ln(\lambda_0) + \ln(m) + \ln(\sum_{t_i<t_n}e^{-\delta (t_n - t_i)}) + \ln(\overline{m}) + \ln( \sum_{\overline{t}_k<t_n}e^{-\delta (t_n - \overline{t}_k)}) \right ]
   \\ 
   + & \ln(\rho) (\overline{N}_{\tau} - \overline{N}_{s} ) - \rho (\overline{t}_{\overline{N}_{\tau}} - \overline{t}_{\overline{N}_{s}}).
\end{align*}
Thus, by rearranging the different terms: 
\begin{align*}
   \ln (\mathcal{L})  = &  -\lambda_0 (\tau - s) - \frac{m}{\delta} (N_{\tau} - N_s) + \frac{m}{\delta}\sum_{s<t_i<t} e^{-\delta (\tau - t_i)} -  \frac{\overline{m}}{\delta} (\overline{N}_{\tau} - \overline{N}_s)  \\ +
    & \frac{\overline{m}}{\delta} \sum_{s<\overline{t}_k<t} e^{-\delta (\tau - \overline{t}_k)} 
    + (N_{\tau} - N_s) \left\{\ln(\lambda_0) + \ln(m) + \ln(\overline{m}) \right\} 
    \\ 
    +  &  
   \sum_{n = (N_s+1)}^{N_{\tau}} \left[ \ln(\sum_{t_i<t_n}e^{-\delta (t_n - t_i)})  + \ln( \sum_{\overline{t}_k<t_n}e^{-\delta (t_n - \overline{t}_k)}) \right]  \\
   + & \ln(\rho) (\overline{N}_{\tau} - \overline{N}_{s} ) - \rho (\overline{t}_{\overline{N}_{\tau}} - \overline{t}_{\overline{N}_{s}}).
\end{align*}
For the sake of notation conciseness, let us denote $f = \ln (\mathcal{L})$. The Hessian matrix, denoted as $H$ is defined as:

\begin{equation*}
 H =   \left(\begin{array}{lllll}
\frac{\partial^2 f}{\partial \lambda_0^2} & \frac{\partial^2 f}{\partial \lambda_0 \partial \rho} &  \frac{\partial^2 f}{\partial \lambda_0 \partial \overline{m}  } & \frac{\partial^2 f}{\partial \lambda_0 \partial m} & \frac{\partial^2 f}{\partial \lambda_0 \partial \delta}  \\
\frac{\partial^2 f}{\partial \rho \partial \lambda_0} &  \frac{\partial^2 f}{\partial \rho^2} & \frac{\partial^2 f}{\partial \rho \partial \overline{m}} &\frac{\partial^2 f}{\partial \rho \partial m} & \frac{\partial^2 f}{\partial \rho \partial \delta} \\ 
\frac{\partial^2 f}{\partial \overline{m} \partial \lambda_0} & \frac{\partial^2 f}{\partial \overline{m} \partial \rho} & \frac{\partial^2 f}{\partial \overline{m}^2} & \frac{\partial^2 f}{\partial \overline{m} \partial m} & \frac{\partial^2 f}{\partial \overline{m} \partial \delta} \\ 
\frac{\partial^2 f}{\partial m \partial \lambda_0} & \frac{\partial^2 f}{\partial m \partial \rho} & \frac{\partial^2 f}{\partial m \partial \overline{m}} & \frac{\partial^2 f}{\partial m^2} & \frac{\partial^2 f}{\partial m \partial \delta} \\ 
\frac{\partial^2 f}{\partial \delta \partial \lambda_0} & \frac{\partial^2 f}{\partial \delta\partial \rho} & \frac{\partial^2 f}{\partial \delta\partial \overline{m}} & \frac{\partial^2 f}{\partial \delta \partial m} & \frac{\partial^2 f}{\partial \delta^2} 
\end{array}\right).
\end{equation*}

By computing the different terms: 

\begin{equation*}
{\sixpt 
 \hspace{-1.7cm} H =   \left(\begin{array}{lllll}
\frac{-(N_{\tau} - N_s)}{\lambda_0^2}  & 0 & 0 & 0 &0  \\ 
0 &\frac{-(N_{\tau} - N_s)}{\rho^2}  & 0&0 &0 \\ 
 0& 0& \frac{-(N_{\tau} - N_s)}{\overline{m}^2} & 0 & \frac{(\overline{N}_{\tau} - \overline{N}_{s} ) - (1+ \delta^2) \sum_{s<\overline{t}_k<\tau}e^{-\delta (\tau - \overline{t}_k)} }{\delta^2} \\ 
 0&0 &0 & \frac{-(N_{\tau} - N_s)}{m^2} & \frac{(N_{\tau} - N_{s} ) - (1+ \delta^2) \sum_{s<t_i<\tau}e^{-\delta (\tau - t_i)} }{\delta^2} \\ 
 0 & 0 & \frac{(\overline{N}_{\tau} - \overline{N}_{s} ) - (1+ \delta^2) \sum_{s<\overline{t}_k<\tau}e^{-\delta (\tau - \overline{t}_k)} }{\delta^2}  & \frac{(N_{\tau} - N_{s} ) - (1+ \delta^2) \sum_{s<t_i<\tau}e^{-\delta (\tau - t_i)} }{\delta^2}  & \frac{\partial^2 f}{\partial \delta^2}
\end{array}\right)
}
\end{equation*}
with: 
{\small
\begin{align*}
\frac{\partial^2 f}{\partial \delta^2} & = \frac{-2m (N_{\tau} - N_{s})}{\delta^3} + \frac{2m (N_{\tau} - N_{s})}{\delta^3} \sum_{s<t_i<\tau} e^{-\delta (\tau - t_i)}\\& + \frac{m}{\delta} \sum_{s<t_i<\tau}e^{-\delta (\tau - t_i)} + m \delta \sum_{s<t_i<\tau}e^{-\delta (\tau - t_i)}  \\&  - \frac{2 \overline{m} (\overline{N}_{\tau} - \overline{N}_{s})}{\delta^3} + \frac{2\overline{m} (\overline{N}_{\tau} - \overline{N}_{s})}{\delta^3} \sum_{s<\overline{t}_k<\tau} e^{-\delta (\tau - \overline{t}_k)} \\& + \frac{\overline{m}}{\delta} \sum_{s<\overline{t}_k<\tau}e^{-\delta (\tau - \overline{t}_i)} + \overline{m} \delta \sum_{s<\overline{t}_k<\tau}e^{-\delta (\tau - \overline{t}_i)}-  2 (N_{\tau} - N_s).  
\end{align*}
}

The variance-covariance matrix of the parameters is then obtained by calculating the opposite of the inverse of the Hessian matrix $H$ evaluated at the maximum likelihood estimates. For a given parameter, the square root of its diagonal element in the variance-covariance matrix gives its standard error, which is used to calculate confidence intervals. This is achieved using Numpy. The 95 \% confidence interval are then calculated using a normality assumption. 

\end{appendices}

\bibliography{sn-bibliography}

\end{document}